\xpatchcmd{\MaketitleBox}{\hrule}{}{}{}
\xpatchcmd{\MaketitleBox}{\hrule}{}{}{}
\g@addto@macro\normalsize{%
  \setlength\abovedisplayskip{4pt}
  \setlength\belowdisplayskip{4pt}
  \setlength\abovedisplayshortskip{4pt}
  \setlength\belowdisplayshortskip{4pt}
}
\numberwithin{equation}{section}
\crefname{section}{Section}{Sections}
\crefname{subsection}{Subsection}{Subsections}
\crefname{condition}{Condition}{Conditions}
\crefname{hypothesis}{Hypothesis}{Conditions}
\crefname{assumption}{Assumption}{Assumptions}
\crefname{lemma}{Lemma}{Lemmas}
\crefname{definition}{Definition}{Definitions}
\numberwithin{equation}{section}
\newtheorem{theorem} {Theorem}[section]
\newtheorem{proposition} {Proposition}[section]
\newtheorem{lemma}{Lemma}[section]
\newtheorem{counter example}{Counter Example}[section]
\newtheorem{remark} {Remark}[section]
\newtheorem{definition} {Definition}[section]
\def\CC{{\rm \kern.24em \vrule width.02em height1.4ex depth-.05ex \kern-.26emC}}
\def\TagOnRight
\def\AA{{it I} \hskip-3pt{\tt A}}
\def\QQ{\rlap {\raise 0.4ex \hbox{$\scriptscriptstyle |$}} {\hskip -0.1em Q}}
\newcommand{\vo}{\vec{o}\@ifnextchar{^}{\,}{}}
\def\YYint#1#2#3{{\setbox0=\hbox{$#1{#2#3}{\iint}$}
    \vcenter{\hbox{$#2#3$}}\kern-.50\wd0}}
\def\XXint#1#2#3{{\setbox0=\hbox{$#1{#2#3}{\int}$}
    \vcenter{\hbox{$#2#3$}}\kern-.50\wd0}}
\def\namedlabel#1#2{\begingroup
   \def\@currentlabel{#2}%
   \label{#1}\endgroup
}
\newcommand{\rmh}[1]{\mathpalette{\raisem@th{#1}}}
\newcommand{\raisem@th}[3]{\hspace*{-1pt}\raisebox{#1}{$#2#3$}}
\newcommand{\descitem}[2]{\item[#1] \label{#2}}
\newcommand{\descref}[2]{\hyperref[#1]{\textnormal{\textcolor{black}{(}\textcolor{blue}{\bf #2}\textcolor{black}{)}}}}
\newcommand{\dref}[2]{\hyperref[#1]{\textcolor{black}{(}\textcolor{blue}{\bf #2}\textcolor{black}{)}}}
\newcommand{\be} {\begin{eqnarray}}
\newcommand{\ee} {\end{eqnarray}}
\newcommand{\Bea} {\begin{eqnarray*}}
\newcommand{\Eea} {\end{eqnarray*}}
\newcommand{\rr}{\rightarrow}
\newcommand{\de} {\delta}
\newcommand{\p}  {\prime}
\newcommand{\e}  {\epsilon}
\newcommand{\De} {\Delta}
\newcommand{\la} {\lambda}
\newcommand{\La} {\Lambda}
\newcommand{\f}{\infty}
\newcommand{\R}{\mathbb{R}}
\newcommand{\noi} {\noindent}
\newcommand{\sgn}{\mathop\mathrm{sgn}}
\newcommand{\al}{\alpha}
\newcommand{\pa}{\partial}
\newcommand{\norm}[1]{\left|\hspace{-0.2mm}\left| #1 \right|\hspace{-0.2mm}\right|}
\newcommand{\abs}[1]{\left| #1\right|}
\newcounter{whitney}
\newcounter{ineqcounter}
\def\ps@pprintTitle{%
\let\@oddhead\@empty
\let\@evenhead\@empty
\def\@oddfoot{}%
\let\@evenfoot\@oddfoot}
\begin{document}

\begin{frontmatter}

\title{Well-posedness  for conservation laws with spatial heterogeneities and a study of BV regularity}

\author[myaddress1]{Shyam Sundar Ghoshal}
\ead{ghoshal@tifrbng.res.in}


\address[myaddress1]{Centre for Applicable Mathematics,Tata Institute of Fundamental Research, Post Bag No 6503, Sharadanagar, Bangalore - 560065, India.}

\author[myaddress2]{John D. Towers}
\ead{john.towers@cox.net}

\author[myaddress1]{Ganesh Vaidya}
\ead{ganesh@tifrbng.res.in}

\address[myaddress2]
{MiraCosta College, 3333 Manchester Avenue, Cardiff-by-the-Sea, CA 92007-1516, USA.}

\begin{abstract}
	In this article, we consider scalar conservation laws with fluxes having spatial discontinuities and possible flat regions and study the following three aspects: (i) existence, (ii) uniqueness and (iii) BV regularity of solutions. We propose a uniqueness condition and prove existence of a weak solution via the method of wave front tracking. In the later part of the article, a BV bound of the solution is achieved under a suitable condition on the initial data and flux. We construct two counterexamples showing  BV blow-up of the solution which proves the optimality on the assumptions.
\end{abstract}
\end{frontmatter}
\tableofcontents
\section{Introduction}\label{section_intro}
This article deals with existence, uniqueness and regularity aspects of solutions for the following initial value problem,
\begin{eqnarray}
u_t+A(x,u)_x&=& 0\quad\quad\quad \text{for}\,\,\,(t,x) \in(0,\infty)\times\R,\label{eq:discont}\\
\label{2}u(x,0)&=& u_0(x)\,\quad\text{for}\,\,\, x\in \mathbb{R}.\label{eq:data}
\end{eqnarray}
Here the flux $A(x,u)$ has spatial
discontinuities, possibly infinitely many, and possibly having accumulation points. In this article, we consider the case of a flux $A$ such that the minima of $u\mapsto A(x,u)$ may occur on some interval $I=[a,b]$ for $a\leq b$. Henceforth, we refer to this type of flux as a `degenerate flux'
if $a<b$, and the interval $[a,b]$ is a `flat region' (see figure \ref{figure:linear} below). 
%
%
%

  The equations of the type \eqref{eq:discont}--\eqref{eq:data} with finitely many spatial discontinuities have been of interest for some decades now and they play an important role in mathematical modelling of various physical phenomena. 
To name a few, the hydrodynamic limit of interacting particle systems with discontinuous speed parameter \cite{CEK},
sedimentation \cite{diehl1996conservation}, petroleum industry and polymer flooding  \cite{shen2017uniqueness}, two phase flow in heterogeneous porous medium \cite{andreianov2013vanishing}, clarifier thickener units used in waste water treatment plants \cite{burger2006extended}, traffic flow on highways with changing surface conditions \cite{garavello2007conservation}. For fluxes having finitely many spatial discontinuities, the uniqueness and existence of solutions have been studied \cite{AJG,AMV,mishra2005,mishra2007convergence,adimurthi2000conservation,AKR,BGKT,bkrt2,BKT,diehl1996conservation,garavello2007conservation,karlsen2004convergence,KT,shen2017uniqueness,towers_disc_flux_1}  via various methods, including Hamilton-Jacobi, numerical schemes, vanishing viscosity, etc.

 In this article we address three aspects of \eqref{eq:discont}: (i) uniqueness in the presence of a degenerate flux, 
 (ii) existence for a general flux which may include degeneracy and (iii) BV regularity of the entropy solution. Previously, the uniqueness problem has been dealt with in \cite{AudussePerthame,Panov2009a} when $u\mapsto A(x,u)$ is unimodal. Existence of an adapted entropy solution within the setup of \cite{AudussePerthame} has been proved in \cite{CEK, GJT_2019,Panov2009a,piccoli2018general,JDT_2020} via various methods. We note that the techniques of \cite{AudussePerthame,Panov2009a} cannot be applied for degenerate fluxes. We prove uniqueness for \eqref{eq:discont} by introducing an entropy condition for a degenerate flux. In the sequel we prove existence of the adapted entropy solution for a degenerate flux via a front tracking algorithm. In the last part of this article, we prove a BV bound for the adapted entropy solution under a suitable assumption on the initial data. Unlike the case of scalar conservation laws with Lipschitz flux, the
 total variation (TV) bound of the entropy solution can blow up \cite{ADGG} for conservation laws with discontinuous flux. In contrast, note that a BV bound holds \cite{Ghoshal-JDE} when $A$ has one spatial discontinuity and the minima of both fluxes have the same height.
 It is noticed in the present article that results on BV bounds for the  case when $A$ has infinitely many spatial discontinuities is completely different from the case of finitely many spatial discontinuities. Here  we offer a sufficient condition to obtain a BV estimate for the entropy solution and construct two counter-examples to show that the conditions are optimal. 
 
 Here we discuss all three problems and compare with previous results. 
\begin{enumerate}
	\item \textit{Uniqueness for degenerate flux:} Uniqueness of the adapted entropy solution to \eqref{eq:discont} is proved in \cite{AudussePerthame,Panov2009a} when $u\mapsto A(x,u)$ has a unique minimum point. It was an open problem to prove uniqueness when the fluxes can have flat regions, since to apply the results of \cite{AudussePerthame,Panov2009a} one needs injectivity of $u\mapsto A(x,u)$ on both side of the unique minima point. In this article, we consider the case of a flux $A$ such that the minima of $u\mapsto A(x,u)$  can occur on some interval $I=[u_M^-(x),u_M^+(x)]$ for $u_M^-(x)\leq u_M^+(x)$. To the best of our knowledge, the uniqueness problem for degenerate fluxes is open even for the case $A(x,u)=H(x)f(u)+H(-x)g(u)$ (the so-called two-flux problem), where $H$ denotes the Heaviside function and $f,g$ are fluxes having suitable regularity. To prove our uniqueness result we do not need any assumption on the traces of the solution at the interface.
	
	\item\textit{Existence of solution via front tracking:} 	We establish existence of an adapted entropy solution via a front tracking algorithm.
Previously, a front tracking approximation for the adapted entropy solution was studied \cite{piccoli2018general} when $u \mapsto A(x,u)$ is strictly increasing and convex. By a difference scheme, existence of the adapted entropy solution is proved \cite{JDT_2020} for the case of a strictly increasing flux. Via 
the method of measured-valued solution existence of an adapted entropy solution has been established in \cite{Panov2009a}. Recently, in \cite{GJT_2019, GTV-2020} existence of an adapted entropy solution has been proved in the setup of \cite{AudussePerthame,Panov2009a} via a Godunov-type approximation. Therefore, even in the set up of \cite{AudussePerthame,Panov2009a}, the question of existence via front tracking approximation was open. We note that when $u\mapsto A(x,u)$ has critical points, a BV bound of the entropy solution may not be possible \cite{ADGG,Ghoshal-JDE} in general, even with BV initial data. One way to get compactness for \eqref{eq:discont} is to look for a TV bound of the solution transformed under the so-called singular mapping (see \cite{GJT_2019}). If $u\mapsto A(x,u)$ has one minima point, then the corresponding singular map becomes a strictly increasing function which allows one to invert the singular mapping and get back the solution. For degenerate fluxes, we note that the singular map fails to become monotone on the flat portion of the flux and hence is not invertible. We modify the singular map by adding a function which is strictly increasing on the flat potion of the flux. Unlike the case of convex conservation laws, for \eqref{eq:discont} the number of fronts may increase after interactions. Recently, it has been shown \cite{AG} that rarefactions cannot occur if the fluxes do not have any affine part. Since we work with a general convex flux which may contain an affine part, the situation becomes more complex as the number of fronts increases due to rarefactions that originate from the interface.


	\item \textit{BV bound for uniformly convex flux:} As we mentioned before, unlike the case of classical scalar conservation laws, the TV of the entropy solution can blow up \cite{ADGG} at a finite time even if the initial data is in BV. In contrast to this, a BV bound of the entropy solution has been established \cite{Ghoshal-JDE} when $A$ has finitely many spatial discontinuities and the fluxes are $C^2$ with same height minima. For $A$ with infinitely many discontinuities BV estimates of the adapted entropy solution have been established \cite{piccoli2018general,JDT_2020} when the map $u\mapsto A(x,u)$ does not have a critical point. Hence, it is an open question to achieve a BV bound for fluxes having critical points in the $u$-variable and infinitely many spatial discontinuities.  In this article we provide a sufficient condition on the initial data to obtain a BV bound for the entropy solution for uniformly convex fluxes. That this result is optimal is demonstrated via two examples provided in sections \ref{sec:counter-ex-1} and \ref{sec:counter-ex-2}.
	
	
	
	\item\textit{BV blow-up:} It has been proved by explicit examples \cite{ADGG,Ghoshal-JDE} that if the minima of convex fluxes are not the same then the TV of the entropy solution blows up even for BV initial data. On the contrary, in \cite{Ghoshal-JDE} a BV bound of the entropy solution has been achieved when $A(x,u)$ has finitely many discontinuities and $u\mapsto A(x,u)$ is $C^2$ having the same height minima. Note that the fluxes considered in the current article have the same height minima and are $C^2$. By providing two examples we show that the BV-regularity results of \cite{Ghoshal-JDE} can not hold when the flux $A$ has infinitely many discontinuities. We show this in two examples constructed in sections \ref{sec:counter-ex-1} and \ref{sec:counter-ex-2}. The example in section \ref{sec:counter-ex-2} further shows that even if we assume uniform convexity of the fluxes, the TV of the entropy solution can blow up. Construction of the example in section \ref{sec:counter-ex-2} is novel and it contains new ideas which differ from previous constructions \cite{ADGG,Ghoshal-JDE}. 
\end{enumerate}

\subsection{Adapted Entropy Solutions}
We make the following assumptions on the flux $A(x,u)$,
\begin{enumerate}[label=\textbf{A-\arabic*}]
	\item \label{A1}$A(x,u)$ is continuous on $\mathbb{R}\setminus {\Omega} \times \mathbb{R},$ where $\Omega$ is a closed zero measure set. 
	
	\item There exists a locally bounded function $q:\R\rr\R$ such that
	\begin{equation}
	\abs{A(x,u)-A(x,v)}\leq q(M)\abs{u-v}\mbox{ for a.e. }x\in\R \mbox{ and }u\in[-M,M]\mbox{ with }M>0.
	\end{equation}

	\item \label{A3}There exist  functions $u_M^{\pm}:\mathbb{R} \rightarrow \mathbb{R}$ which are continuous on $\R \setminus \Omega,$ such that $u_M^-(x)\leq u_M^+(x)$ for $x\in\R\setminus\Omega$ and $A(x,\cdot)$ is decreasing on $(-\infty,u_M^-(x)]$ and increasing on $[u_M^+(x),\infty)$ satisfying $A(x,z)=0$ for all $u_M^-(x) \leq  z \leq u_M^+(x)$.
\end{enumerate}
For $\al\geq0$, let $k^{\pm}_\al$ be defined as follows,
\begin{eqnarray}
k^{-}_\al(x)\leq u_M^-(x)&\mbox{ such that }A(x,k^{-}_\al(x))=\al,\\
k^{+}_\al(x)\geq u_M^+(x)&\mbox{ such that }A(x,k^{+}_\al(x))=\al.
\end{eqnarray}

\begin{definition}\label{def:states}
	A function $k: \R \rightarrow \R$ is said to be a stationary state, if $u(t,x)=k(x)$ is the weak solution to the IVP \eqref{eq:discont}--\eqref{eq:data}, with $u_0(x)=k(x)$. For $\al>0$, we work with two types of stationary states $k_\al^{+}:\R\rr(u_M^+,\f)$ and $k_\al^{-}:\R\rr(-\f,u_M^-)$ such that $A(x,k^{\pm}_\al(x))=\al$. We define $\mathscr{S}_\al$ to be the set of all stationary states corresponding to height $\al\geq0$.
\end{definition}
\begin{remark}
	Note that for $\al=0$ there are infinitely many choices for stationary states $k(x)$. Any $k(x)$ satisfying $k(x)\in[u_M^-(x),u_M^+(x)]$ for all $x \in \R$ is a stationary state. We observe that
	$u_M^{\pm}$ can be written as 
	\begin{equation}
	u^-_M(x)=\inf\{u\in \R;\, A(x,u)=0\} \mbox{ and } u^+_M(x)=\sup\{u\in \R;\, A(x,u)=0\}.
	\end{equation}
	For notational brevity we denote a stationary state by $k_\al(x)$ for $\al\geq0$. When $\al>0$, $k_\al$ coincides with one of $k_\al^{\pm}$.
\end{remark} 
%
%

\begin{definition}[Adapted Entropy Condition] \label{def_adapted_entropy}
	A function $u\in L^{\infty}(\R \times \R^+) \cap C([0,T],L_{loc}^1(\R))$ is an adapted entropy solution of the Cauchy problem  if it satisfies the following inequality in the sense of distribution:
	\begin{equation}\label{ineq:adapted}
	{\partial_t} |u(x,t)-k^{\pm}_{\alpha}(x)| +{\partial_x}\left[ \sgn (u-k^{\pm}_{\alpha}(x)) (A(u,x)-\alpha) \right] \leq 0,
	\end{equation}for $\alpha \geq 0.$
	Or equivalently, for all $0\leq\phi \in C_c^{\infty}(\R \times \R^+)$
		\begin{align}
	&\int\limits_Q |u(t,x)-k_{\alpha}(x)|\phi_t(t,x)+\sgn (u(t,x)-k_{\alpha}(x)) (A(x,u(t,x))-\alpha)\phi_x(t,x)\, dx dt\nonumber \\
	&+\int\limits_{\R}|u_0(x)-k_{\alpha}(x)|\phi(0,x)\,dx\geq 0.\label{E2}
	\end{align}
\end{definition}

\begin{figure}
	\centering
	\begin{tikzpicture}

	\draw[thick,->] (-11 ,0) -- (0.5,0) node[anchor=north west] {$u$};
	
	\draw[color=blue] plot [smooth] coordinates { (-4,1) (-4.25,1.016) (-4.5,1.125) (-4.75,1.42) (-5,2) (-5.25,2.95) (-5.5,4.375) };
	\draw[color=blue] plot [smooth] coordinates { (-4,1)
		(-3,1) (-2.75,1.016) (-2.5,1.125) (-2.25,1.42) (-2.0,2) (-1.75,2.95) (-1.5,4.375)};
	\draw[color=blue] plot [smooth] coordinates { (-8,1) (-8.25,1.016) (-8.5,1.125) (-8.75,1.42) (-9,2) (-9.25,2.95) (-9.5,4.375) };
	\draw[color=blue] plot [smooth] coordinates { (-8,1)
		(-6,1) (-5.75,1.016) (-5.5,1.125) (-5.25,1.42) (-5.0,2) (-4.75,2.95) (-4.5,4.375)};
	
	\draw[dashed] (-8.2,1) -- (-8.2,0);
	\draw[dashed] (-5.8,1) -- (-5.8,0);
	
	\draw[dashed] (-4.2,1) -- (-4.2,0);
	\draw[dashed] (-2.8,1) -- (-2.8,0);
	
	\draw[thick][] (-8.3,-.1) node[anchor=north] {$u_M^-(x_1)$};
	\draw[thick][] (-5.9,-.1) node[anchor=north] {$u_M^+(x_1)$};
	\draw[thick][] (-4.3,-.1) node[anchor=north] {$u_M^-(x_2)$};
	\draw[thick][] (-2.5,-.1) node[anchor=north] {$u_M^+(x_2)$};
	
	\draw[thick][] (-7,3) node[anchor=north] {$A(x_1,u)$};
	\draw[thick][] (-3.5,3) node[anchor=north] {$A(x_2,u)$};

	\end{tikzpicture}
	\caption{This illustrates a flux $A(x,u)$ at two spatial points $x_1,x_2\in\R$. Here $A(x_i,u),\,i=1,2$ are convex functions having flat regions in $[u_M^-(x_1),u_M^+(x_2)],\,i=1,2$  respectively.}\label{figure:linear}
\end{figure}
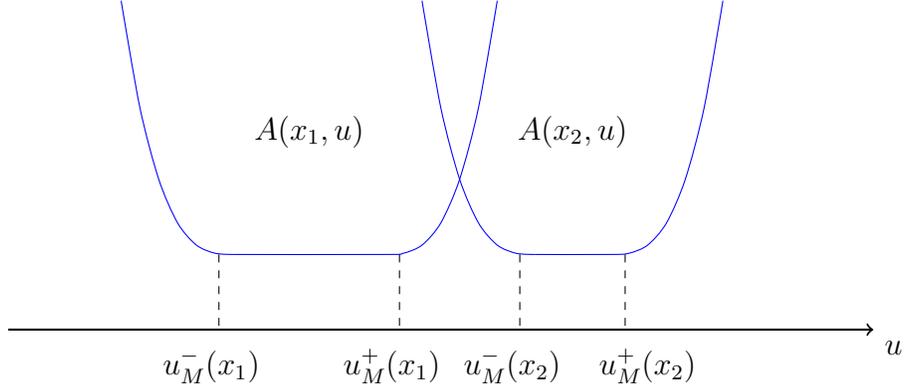

\section{Uniqueness}
\begin{theorem}\label{theorem1}
	Let $u,v \in C(0,T;L^1_{loc}(\R)) \cap L^{\infty}(Q)$ entropic solution to the IVP \eqref{eq:discont}--\eqref{eq:data} with initial data $u_0,v_0 \in L^{\infty}(\R).$ Assume the flux satisfies the hypothesis \eqref{A1}--\eqref{A3}. Then for $t\in [0,T]$ the following holds,
	\begin{equation}\label{estimate:stability}
	\int\limits_{a}^{b}|u(t,x)-v(t,x)|dx \leq \int\limits_{a+Mt}^{b-Mt}|u_0(x)-v_0(x)|dx,
	\end{equation}
	for  $-\f\leq a<b\leq \f$ and $M:=\sup\{\abs{A_u(x,u(t,x))};\,x\in\R,0\leq t\leq T\}$.
\end{theorem}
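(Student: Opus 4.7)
The plan is to establish the stability estimate via Kruzkov's doubling-of-variables technique adapted to the degenerate heterogeneous setting. First, I would fix two adapted entropy solutions $u$ and $v$ and write the entropy inequality \eqref{E2} for $u$ at a point $(t,x)$ using the stationary state $k^{\pm}_{\alpha(s,y)}$, where the height is chosen as $\alpha(s,y)=A(y,v(s,y))$ so that $k^{\pm}_{\alpha(s,y)}(y)=v(s,y)$ whenever $v(s,y)$ lies outside the flat region. The analogous inequality is written for $v$ at $(s,y)$ using $k^{\pm}_{\alpha(t,x)}$ with $\alpha(t,x)=A(x,u(t,x))$. Summing these with the standard doubling test function $\phi_{\varepsilon}(t,x,s,y)$, a product of a macroscopic cutoff with mollifiers in $(t-s)$ and $(x-y)$, and letting $\varepsilon\to 0$, should produce the Kato-type inequality
\begin{equation*}
\partial_t\,|u-v|+\partial_x\bigl[\sgn(u-v)\,(A(x,u)-A(x,v))\bigr]\leq 0
\end{equation*}
in the sense of distributions on $(0,T)\times\R$.

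Next I would resolve the ambiguity introduced by the flat region $[u_M^-(x),u_M^+(x)]$ through a case split on the positions of $u(t,x)$ and $v(s,y)$ relative to it. When both values lie on the same side of the flat region (both in $[u_M^+,\infty)$ or both in $(-\infty,u_M^-]$), the choice of $k_\alpha^+$ or $k_\alpha^-$ is canonical and the usual sign-consistency identity combines the two summed inequalities into the Kato form. The delicate case is when $v(s,y)$ lies strictly inside the flat region: then $\alpha=0$ and the stationary state is not unique. There I would exploit that $A(y,v(s,y))=0$ and choose the measurable selection $k_0(y)=v(s,y)\in[u_M^-(y),u_M^+(y)]$, which is admissible as a stationary state by Definition \ref{def:states}. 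The mixed subcases, in which $u$ and $v$ lie on opposite sides of the flat region, can be handled by inserting an intermediate height, adding and subtracting, and exploiting the monotonicity structure granted by \ref{A3} together with the triangle inequality for $|\cdot|$.

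Once the Kato inequality is in hand, the stability estimate \eqref{estimate:stability} follows by testing it against a regularised characteristic function of a truncated cone whose slanted boundaries have slope $1/M$ determined by the finite-propagation bound $M=\sup|A_u(x,u(t,x))|$. Standard manipulation then shows that the boundary contributions on the slanted walls have the correct sign and are absorbed, leaving only the initial-time contribution on the right-hand side, which yields \eqref{estimate:stability}.

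The main obstacle, in my view, will be the degenerate-region step: constructing the measurable selection of the stationary state that matches $v$ (respectively $u$) on the flat region, and verifying that the zero-measure discontinuity set $\Omega$ of $A$ contributes nothing to the doubling integrals, despite the fact that $\Omega$ may have accumulation points. Because $\Omega$ is closed of Lebesgue measure zero and $A(\cdot,u)$ is continuous off $\Omega$ with the locally Lipschitz bound of \ref{A1}--\ref{A3}, a Fubini-plus-dominated-convergence argument should render $\Omega$ negligible in every limiting integral, so that no trace condition at the interfaces is required. This is precisely the feature emphasised in the introduction, and reconciling it with the infinitely-many, possibly accumulating, discontinuities is the principal technical point of the proof.
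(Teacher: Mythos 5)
Your proposal follows essentially the same route as the paper: Kruzkov doubling of variables with the comparison states taken to be stationary states at heights $\alpha=A(y,v(s,y))$ and $\alpha=A(x,u(t,x))$, a case analysis yielding the sign-cancellation identity that kills the $\xi_\eta'$ terms, and the standard cone argument to pass from the Kato inequality to \eqref{estimate:stability}. The one ingredient you flag but do not supply --- the selection of a stationary state through the flat region that matches $v(s,y)$ at $x=y$ and varies continuously in $x$ --- is exactly what the paper provides explicitly via $\tilde{v}(s,y,x)=\min\bigl(u_M^-(x)+h,\,u_M^+(x)\bigr)$ with $h=v(s,y)-u_M^-(y)$, so your plan is consistent with the paper's proof.
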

\begin{proof}
	To prove the theorem, it is enough to show the following,
	\begin{align*}
	&\int\limits_Q |u(t,x)-v(t,x)|\phi_t(t,x)
	+\sgn (u(t,x)-v(t,x)) (A(x,u(t,x))-A(x,v(t,x)) )\phi_x(t,x)\, dx dt \\
	&+\int\limits_{\R}|u_0(x)-v_0(x)|\phi(0,x)dx\geq 0.
	\end{align*}
	Let $(t,x), (s,y) \in Q.$ Corresponding to $v(s,y)$ and $u(t,x),$ we define $\tilde{v}(s,y,\cdot), \tilde{u}(t,x,\cdot): \R \rightarrow \R$ respectively as below.
	\begin{eqnarray}\label{tilv}
	\tilde{v}(s,y,x):=
	{\left\{\begin{array}{ccl}
		k^+_{A(y,v(s,y))}(x)& \, \mbox{if}&\, v(s,y)> u_M^+(y), \\[2mm]
		k^-_{A(y,v(s,y))}(x)& \, \mbox{if}&\, v(s,y)< u_M^-(y), \\[2mm]
		\min (u_M^-(x)+h,u_M^+(x)) & \, \mbox{if}&\, \text{otherwise,}
		\end{array}\right.}
	\end{eqnarray}
	where $h=v(s,y)-u_M^-(y)$. Similarly,
	\begin{eqnarray}\label{tilu}
	\tilde{u}(t,x,y):=
	{\left\{\begin{array}{ccl}
		k^+_{A(x,u(t,x))}(y)& \, \mbox{if}&\, u(t,x)> u_M^+(x), \\[2mm]
		k^-_{A(x,u(t,x))}(y)& \, \mbox{if}&\, u(t,x)<
		u_M^-(x), \\[2mm]
		\min (u_M^-(y)+h,u_M^+(y)) & \, \mbox{if}&\, \text{otherwise,}
		\end{array}\right.} 
	\end{eqnarray}
	where $h=u(t,x)-u_M^-(x)$.
	By the above definition, it is clear that 
	\begin{align*}
	A(y,\tilde{u}(t,x,y))&=A(x,u(t,x)),\\
	A(x,\tilde{v}(s,y,x))&=A(y,v(s,y)).
	\end{align*}
	Given a $(s,y)\in [0,T]\times\R,$ we rewrite the entropy condition \eqref{E2} for $u(t,x)$ with $\alpha=A(y,v(s,y))$, which implies
	\begin{equation*}
	\pa_t|u(t,x)-k_{A(y,v(s,y))}(x)|+\pa_x\left[ \sgn (u(t,x)-k_{A(y,v(s,y))}(x)) (A(x,u(t,x))-A(y,v(s,y)))\right] \le 0.
	\end{equation*}
	Set $k_{A(y,v(s,y))}(x)=\tilde{v}(s,y,x),$ by the definition of $k_{\alpha}$ we get  
	\begin{equation}\label{4.6}
	\pa_t|u(t,x)-\tilde{v}(s,y,x)|+\pa_x\left[ \sgn (u(t,x)-\tilde{v}(s,y,x)) (A(x,u(t,x))-A(y,v(s,y)))\right]\le 0.
	\end{equation}
	Similarly, now rewriting the entropy condition $v(s,y)$
	with $\alpha=A(x,u(t,x))$, we get
	\begin{eqnarray}\label{4.7}
	\pa_t |v(s,y)-\tilde{u}(t,x,y)|+\pa_x\left[ \sgn (v(s,y)-\tilde{u}(t,x,y)) (A(y,v(s,y))-A(x,u(t,x)))\right]\leq 0.
	\end{eqnarray}
	Let $\xi_{\eta},\rho_{\epsilon}\in C^{\infty}_c(\R)$  be mollifiers, such that  $supp(\rho) \subset [-2,-1].$ We define $\Phi_{\eta,\epsilon}: Q^2\rightarrow \R \in C_c^{\infty}(Q^2) $ as follows,
	\begin{equation*}
	\Phi_{\eta,\epsilon}(t,x,s,y)=\phi(t,x)\rho_{\epsilon}(t-s)\xi_{\eta}(x-y).
	\end{equation*}
	Note that 
	\begin{align*}
	\frac{\partial}{\partial x}{\Phi_{\eta,\epsilon}}(t,x,s,y)&=\phi(t,x)\rho_{\epsilon}(t-s)\xi^{\p}_{\eta}(x-y)+\phi_x(t,x)\rho_{\epsilon}(t-s)\xi_{\eta}(x-y),\\
	\frac{\partial}{\partial y}\Phi_{\eta,\epsilon}(t,x,s,y)&=-\phi(t,x)\rho_{\epsilon}(t-s)\xi^{\p}_{\eta}(x-y),\\
	\frac{\partial}{\partial t}\Phi_{\eta,\epsilon}(t,x,s,y)&=\phi(t,x)\rho^{\p}_{\epsilon}(t-s)\xi_{\eta}(x-y)+\phi_t(t,x)\rho_{\epsilon}(t-s)\xi_{\eta}(x-y),\\
	\frac{\partial}{\partial s}{\Phi_{\eta,\epsilon}}(t,x,s,y)&=-\phi(t,x)\rho^{\p}_{\epsilon}(t-s)\xi_{\eta}(x-y).
	\end{align*}
	Integrating \eqref{4.6} in $x,y,t,s$ against the function $\Phi_{\eta,\epsilon}(t,x,s,y)$,we have,
	\begin{align*}
	&\int\limits_{Q^2} |u(t,x)-\tilde{v}(s,y,x)|\phi(t,x)\rho^{\p}_{\epsilon}(t-s)\xi_{\eta}(x-y)dx dy dt ds\\
	+&\int\limits_{Q^2} |u(t,x)-\tilde{v}(s,y,x)|\phi_t(t,x)\rho_{\epsilon}(t-s)\xi_{\eta}(x-y)dx dy dt ds\\
	+&\int\limits_{Q^2}\left[ \sgn (u(t,x)-\tilde{v}(s,y,x)) (A(x,u(t,x))-A(y,v(s,y)))\right]\phi(t,x)\rho_{\epsilon}(t-s)\xi^{\p}_{\eta}(x-y)dx dy dt ds\\
	+&\int\limits_{Q^2}\left[ \sgn (u(t,x)-\tilde{v}(s,y,x)) (A(x,u(t,x))-A(y,v(s,y)))\right]\phi_x(t,x)\rho_{\epsilon}(t-s)\xi_{\eta}(x-y)) dx dy dt ds \\
	+&\int\limits_{Q}\int\limits_{\R}|u_0(x)-\tilde{{v}}(s,y,x)|\phi(0,x)\rho_{\epsilon}(-s)\xi_{\eta}(x-y) dx dy ds\geq 0.
	\end{align*}
	Integrating \eqref{4.7} in $x,y,t,s$ against function $\Phi_{\eta,\epsilon}(t,x,s,y)$, we get
	\begin{align*}
	&-\int\limits_{Q^2} |v(s,y)-\tilde{u}(t,x,y)|\phi(t,x)\rho^{\p}_{\epsilon}(t-s)\xi_{\eta}(x-y)dx dy dt ds\\
	&-\int\limits_{Q^2}\left[ \sgn (v(s,y)-\tilde{u}(t,x,y)) (A(y,v(s,y))-A(x,u(t,x)))\right]\phi(t,x)\rho_{\epsilon}(t-s)\xi^{\p}_{\eta}(x-y) dx dy dt ds\\
	+&\int\limits_{Q}\int\limits_{\R}|v_0(x)-\tilde{{u}}(t,x,y)|\phi(t,x)\rho_{\epsilon}(t)\xi_{\eta}(x-y) dx dy dt\geq 0.
	\end{align*}
	Adding the above two inequalities and collecting the common terms, we have the sum of the following 6 terms
	\begin{enumerate}
		\item 
		\begin{equation*}
		\int\limits_{Q^2} |u(t,x)-\tilde{v}(s,y,x)|\phi_t(t,x)\rho_{\epsilon}(t-s)\xi_{\eta}(x-y)dx dy dt ds,
		\end{equation*}
		\item 
		\begin{equation*}
		\int\limits_{Q^2} \left(|u(t,x)-\tilde{v}(s,y,x)|-|v(s,y)-\tilde{u}(t,x,y)|\right)\phi(t,x)\rho^{\p}_{\epsilon}(t-s)\xi_{\eta}(x-y)dx dy dt ds,
		\end{equation*}
		\item 
		\begin{equation*}
		\int\limits_{Q^2}\left[ \sgn (u(t,x)-\tilde{v}(s,y,x)) (A(x,u(t,x))-A(y,v(s,y)))\right]\phi_x(t,x)\rho_{\epsilon}(t-s)\xi_{\eta}(x-y) dx dy dt ds,
		\end{equation*}
		\item 
		\begin{align*}
		&\int\limits_{Q^2}\left[\Big(\sgn (v(s,y)-\tilde{u}(t,x,y))+\sgn (u(t,x)-\tilde{v}(s,y,x))\Big) (-A(y,v(s,y))+A(x,u(t,x)))\right]\\
		&\quad \quad\quad \quad\quad \quad\quad \quad\quad \quad\quad \quad\quad \quad \quad\quad \quad\quad \quad\quad  \quad
		\phi(t,x)\rho_{\epsilon}(t-s)\xi^{\p}_{\eta}(x-y) dx dy dt ds,
		\end{align*}
		\item 
		\begin{equation*}
		\int\limits_{Q}\int\limits_{\R}|u_0(x)-\tilde{{v}}(s,y,x)|\phi(t,x)\rho_{\epsilon}(-s)\xi_{\eta}(x-y) dx dy ds,
		\end{equation*}
		\item 
		\begin{equation*}
		\int\limits_{Q}\int\limits_{\R}|v_0(x)-\tilde{{u}}(t,x,y)|\phi(t,x)\rho_{\epsilon}(t)\xi_{\eta}(x-y) dx dy dt,
		\end{equation*}
	\end{enumerate}
	is greater than equal to 0.\\
	We now make the following claims:
	\begin{enumerate}[label=\roman*.]
		\item\label{C1} $\tilde{v}(s,y,x) \rightarrow \tilde{v}(s,y,y)=v(s,y)$ as $x \rightarrow y$ for a.e. $y\in \R$.
		\item\label{C2}  $\tilde{u}(t,x,y) \rightarrow \tilde{u}(t,x,x)=u(t,x)$ as $y \rightarrow x$ for a.e. $x\in \R$.
		\item\label{C3}$\left(\sgn (v(s,y)-\tilde{u}(t,x,y))+\sgn (u(t,x)-\tilde{v}(s,y,x))\right) (-A(y,v(s,y))+A(x,u(t,x)))=0$.
	\end{enumerate}
	
	To prove \eqref{C1}, let us assume that $A(\cdot,\cdot)$ is continuous at $(y,v(s,y))$. Therefore,
	\begin{equation*}
	A(x,\tilde{v}(s,y,y)) \rightarrow A(y,\tilde{v}(s,y,y))  \text{ as } x \rightarrow y.
	\end{equation*}
	Now, If $v(s,y)< u_M^-(y),$ then, by \eqref{tilv} we have $\tilde{v}(s,y,x)<u_M^-(x)$ for all $x\in\R$, in particular, $\tilde{v}(s,y,y)<u_M^-(y)$. By \eqref{tilv}, we also have, $A(y,\tilde{v}(s,y,y))=A(y,v(s,y))=A(x,\tilde{v}(s,y,x))$. Therefore, $A(x,\tilde{v}(s,y,y))- A(x,\tilde{v}(s,y,x)) \rr0$ as $x\rr y$. Since $\{\tilde{v}(s,y,x),x\in\R\}$ is bounded and $A(x,u)$ is monotone on $(-\f,u_M^-(x)]$, we have $\tilde{v}(s,y,x)\rr\tilde{v}(s,y,y)$ as $x\rr y$. When ${v}(s,y)>u_M^+(y)$, by a similar argument we have $\tilde{v}(s,y,x)\rr\tilde{v}(s,y,y)$ as $x\rr y$. Suppose $v(s,y)\in[u_M^{-}(y),u_M^{+}(y)]$. Then by \eqref{tilv}, we have $\tilde{v}(s,y,x)=\min \{u_M^-(x)+v(s,y)-u_M^-(y),u_M^+(x)\}$. Since $y$ is a continuity point of $u_M$, we have $u_M^-(x)+v(s,y)-u_M^-(y)\rr v(s,y)$ as $x\rr y$. Therefore, we have $\tilde{v}(s,y,x)\rr\min \{v(s,y),u_M^+(y)\}$ as $x\rr y$. Since $v(s,y)\in[u_M^{-}(y),u_M^{+}(y)]$, we have $\tilde{v}(s,y,x)\rr v(s,y)$ as $x\rr y$. This proves \eqref{C1}. By a similar argument, we prove \eqref{C2}.

	%
	%
	To prove \eqref{C3}, consider, 
	\begin{eqnarray}\label{eqnc3}
	\left(\sgn (v(s,y)-\tilde{u}(t,x,y))+\sgn (u(t,x)-\tilde{v}(s,y,x))\right) (A(x,u(t,x))-A(y,v(s,y))).
	\end{eqnarray}
	Now, depending on the values of $A(y,v(s,y)), A(x,u(t,x))$, we will have the following four cases:
	\begin{enumerate}[label=\textbf{Case \arabic*.}]
		\item Suppose $A(y,v(s,y)), A(x,u(t,x))>0$. In this case, we follow a similar argument as in \cite{AudussePerthame}. Note that 
		\begin{align}
		sgn(u(t,x)-u_M(x))&=sgn(u(t,x,y)-u_M(y)),\label{sgn1}\\
		sgn(v(s,y)-u_M(y))&=sgn(v(s,y,x)-u_M(x)).\label{sgn2}
		\end{align} Suppose $v(s,y)>\tilde{u}(t,x,y)$. Then there are three possibilities 
		\begin{enumerate}
			\item\label{sgn:vu1} $v(s,y)>\tilde{u}(t,x,y)>u_M^+(y)$. Therefore, $A(x,\tilde{v}(s,y,x))=A(y,v(s,y))>A(y,\tilde{u}(t,x,y))=A(x,u(t,x))$. By using \eqref{sgn1} and \eqref{sgn2}, we get $\tilde{v}(s,y,x)>{u}(t,x)>u_M^+(x)$.
			
			\item $u_M^->v(s,y)>\tilde{u}(t,x,y)$. By a similar argument as in \eqref{sgn:vu1} we get $u_M^->\tilde{v}(s,y,x)>{u}(t,x)$.
			
			\item\label{sgn:vu3} $v(s,y)>u_M^+(y)>u_M^-(y)>\tilde{u}(t,x,y)$. By using \eqref{sgn1} and \eqref{sgn2}, we get $\tilde{v}(s,y,x)>u_M^+(x)>u_M^-(x)>u(t,x)$.
			
		\end{enumerate}
		Hence, the term in \eqref{eqnc3} vanishes for this case. By a similar argument, we can show the same result when $v(s,y)<\tilde{u}(t,x,y)$. Note that when $v(s,y)=\tilde{u}(t,x,y)$, we have
		\begin{equation}
		A(y,v(s,y))=A(y,\tilde{u}(t,x,y))=A(x,u(t,x))=A(x,\tilde{v}(s,y,x)).
		\end{equation}
		Therefore, the term in \eqref{eqnc3} vanishes for this case too.
		
		\item Now we consider $A(y,v(s,y))=A(x,u(t,x))=0$. This implies $(A(x,u(t,x))-A(y,v(s,y)))=0$ and hence \eqref{eqnc3} vanishes.

		\item\label{case3} Next we consider $A(y,v(s,y))=0$ and $A(x,u(t,x))>0$. In this case, using the \eqref{tilv}-\eqref{tilu}, we get $v(s,y)\in [u_M^-(y),u_M^+(y)]$ and $v(s,y,x)\in [u_M^-(x),u_M^+(x)]$. Suppose $u(t,x)>u_M^+(x)$. By \eqref{sgn2} we have $\tilde{u}(t,x,y)>u_M^+(y)$. Therefore, the term in \eqref{eqnc3} vanishes.  By a similar argument, we can show that \eqref{eqnc3} vanishes when $u(t,x)<u_M^-(x)$.
		\item Finally, we consider $A(y,v(s,y))>0$ and $A(x,u(t,x))=0$. By a similar argument as in \eqref{case3} we show the term in \eqref{eqnc3} vanishes.
	\end{enumerate}
	This completes proof of \eqref{C3}. Now rest of the proof runs on the similar lines  \cite{AudussePerthame}, which we briefly discuss for the sake of completeness.
	
	\begin{enumerate}[label= \textbf{Term} \arabic*: ]
		\item By using \eqref{C1} we first sending $\eta$ to zero and then sending $\epsilon$ to zero and obtain,
		\begin{align*}
		&\int\limits_{Q^2} |u(t,x)-\tilde{v}(s,y,x)|\phi_t(t,x)\rho_{\epsilon}(t-s)\xi_{\eta}(x-y)\,dx dy dt ds\\
		& \rightarrow \int\limits_{Q} |u(t,x)-{v}(t,x)|\phi_t(t,x)dx dt.
		\end{align*}
		\item Now we want to show
		\begin{equation}\label{pass-to-lim2}
		\int\limits_{Q^2} \Big(|u(t,x)-\tilde{v}(s,y,x)|-|v(s,y)-\tilde{u}(t,x,y)|\Big)\phi(t,x)\rho^{\p}_{\epsilon}(t-s)\xi_{\eta}(x-y)dx dy dt ds \rightarrow 0.
		\end{equation}
		By using the reverse triangular inequality $||a|-|b||\leq |a-b|$, we obtain,
		\begin{align*}
		||{u}(t,x)-\tilde{{v}}(s,y,x)|-|{v}(s,y)-\tilde{u}(t,x,y)||
		&\leq |{u}(t,x)-\tilde{{u}}(t,x,y)-\tilde{{v}}(s,y,x)+{v}(s,y)|\\
		&\le |{u}(t,x)-\tilde{u}(t,x,y)|+|\tilde{{v}}(s,y,x)-{v}(s,y)|.
		\end{align*}
		Now, using \eqref{C1}-\eqref{C2}, first let $\eta \rightarrow 0.$ Therefore,
		\begin{equation*}
		\int\limits_{\R^2} \left(|u(t,x)-\tilde{v}(s,y,x)|-|v(s,y)-\tilde{u}(t,x,y)|\right)\phi(t,x)\xi_{\eta}(x-y)dx dy\rr 0\mbox{ as }\eta\rr0,
		\end{equation*}
		for each fixed $\e>0$. This proves \eqref{pass-to-lim2}.
		
		\item Now we wish to show
		\begin{align}
		&\int\limits_{Q^2}\left[ \sgn (u(t,x)-\tilde{v}(s,y,x)) (A(x,u(t,x))-A(y,v(s,y)))\right]\phi_x(t,x)\rho_{\epsilon}(t-s)\xi_{\eta}(x-y)) dx dy dt ds \nonumber\\
		&\rightarrow \int\limits_{Q}\left[ \sgn (u(t,x)-{v}(t,x)) (A(x,u(t,x))-A(y,v(t,x)))\right]\phi_x(t,x)) dx  dt \mbox{ as }\eta\rr0,\e\rr0.\label{pass-to-lim3}
		\end{align}
		We consider $\mathcal{A}(x,u,v)=sgn(u-v)(A(x,u)-A(x,v))$. Since $\tilde{v}(s,y,x)\in L^{\f}([0,T]\times\R\times\R)$, there exists $C>0$ such that the following holds
		\begin{equation*}
		\abs{\mathcal{A}(x,u(t,x),\tilde{v}(s,y,x))-\mathcal{A}(x,u(t,x),v(s,y))}\leq C\abs{\tilde{v}(s,y,x)-v(s,y)}.
		\end{equation*}
		By using \eqref{C1} and letting $\eta \rightarrow 0$, we have
		\begin{align*}
		&\int\limits_{\R^2}\left[ \sgn (u(t,x)-\tilde{v}(s,y,x)) (A(x,u(t,x))-A(y,v(s,y)))\right]\phi_x(t,x)\xi_{\eta}(x-y)) dx dy \\
		&\rightarrow \int\limits_{\R}\left[ \sgn (u(t,x)-{v}(t,x)) (A(x,u(t,x))-A(x,v(t,x)))\right]\phi_x(t,x)) dx .
		\end{align*}
		Now we pass to the limit as $\e\rr0$ and get \eqref{pass-to-lim3}.
		
		\item By using \eqref{C3} we get
		\begin{align*}
		&\int\limits_{Q^2}\sgn (v(s,y)-\tilde{u}(t,x,y))(-A(y,v(s,y))+A(x,u(t,x)))\phi(t,x)\rho_{\epsilon}(t-s)\xi^{\p}_{\eta}(x-y) dx dy dt ds  \\
		+&\int\limits_{Q^2}\sgn (u(t,x)-\tilde{v}(s,y,x)) (-A(y,v(s,y))+A(x,u(t,x)))\phi(t,x)\rho_{\epsilon}(t-s)\xi^{\p}_{\eta}(x-y) dx dy dt ds \\
		=&0.
		\end{align*}
		\item By using \eqref{C1} we obtain
		\begin{equation*}
		\int\limits_{Q}\int\limits_{\R}|u_0(x)-\tilde{{v}}(s,y,x)|\phi(t,x)\rho_{\epsilon}(-s)\xi_{\eta}(x-y) dx dy ds \rightarrow \int\limits_{\R}|u_0(x)-v_0(x)|dx\mbox{ as }\e\rr0.
		\end{equation*}
		\item Since support of $\rho$ is a subset of $[-2,-1]$, we have,
		\begin{equation*}
		\int\limits_{Q}\int\limits_{\R}|v_0(x)-\tilde{{u}}(t,x,y)|\phi(t,x)\rho_{\epsilon}(t)\xi_{\eta}(x-y) dx dy dt \rightarrow 0\mbox{ as }\e\rr0.
		\end{equation*}
	\end{enumerate}
	This completes the proof of Theorem \ref{theorem1}.
	
\end{proof}

%
\section{Existence via front-tracking}\label{MainResults}
%
%
To use the Front tracking algorithm for $A(x,u)$ of the type \eqref{A1}-\eqref{A3}, we make the following additional assumptions:
\begin{enumerate}[label=\textbf{B-\arabic*}]
\item \label{A4}There exists a continuous function $\eta: \mathbb{R} \rightarrow \mathbb{R}$ and a BV function $a:\R \rightarrow \R$ such that $$|A(x,u)-A(y,u)| \leq \eta(u)|a(x)-a(y)| \quad \forall x,y,u \in \R.$$

\item \label{B2} The map $u\mapsto A(x,u)$ is convex for each $x\in \R$.
\item \label{A2} There exist non negative convex functions $B_1$ and $B_2$ satisfying \\$B_1(u)\leq|A(x,u)|\leq B_2(u)$  and $\lim\limits_{|u| \rightarrow \infty}B_1(u)=\infty$.
\item \label{A5}The function $u_M(\cdot) \in BV(\R)$ and $(u_M^+(\cdot)-u_M^-(\cdot))\in BV(\R)$.
\end{enumerate}
\begin{theorem}\label{MT}
Suppose $u_0(\cdot)\in L^\f(\R),$ and $A(\cdot,\cdot)$ satisfies the assumptions \eqref{A1}--\eqref{A3} and \eqref{A4}--\eqref{A5}. Then, there exists a unique adapted entropy solution to \eqref{eq:discont} corresponding to the initial data $u_0$. 
\end{theorem}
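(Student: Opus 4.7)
The plan is to establish existence via a front tracking scheme, derive compactness through a modified singular mapping that remains strictly monotone across the flat region $[u_M^-(x), u_M^+(x)]$, and then invoke Theorem \ref{theorem1} for uniqueness.

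I would begin by discretizing the problem. Using \eqref{A4} and the BV hypothesis \eqref{A5}, approximate $A$ by a sequence $A_n(x,u)$ that is piecewise constant in $x$ on a partition $\{x_i^n\}_{i}$ (with $A_n \to A$ controlled by $\mathrm{TV}(a)$), and replace $u\mapsto A_n(x_i^n\pm,u)$ on each strip by a piecewise-linear convex interpolant preserving the flat portion $[u_M^-(x_i^n\pm), u_M^+(x_i^n\pm)]$. Approximate $u_0$ by a step function $u_0^n$. Build a front tracking approximation $u^n$: away from the interfaces, solve the standard Riemann problem for the convex piecewise-linear flux by concatenating shocks and rarefaction fans approximated as small shocks; at an interface $x=x_i^n$, solve the adapted Riemann problem by connecting the two one-sided states through the unique pair of stationary levels $k^\pm_\alpha$ of common flux height $\alpha$, which is well defined because the one-sided fluxes are convex with the same minimum value $0$. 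Because the flux may possess a strictly increasing/decreasing affine part, such an interface Riemann problem generically emits a rarefaction fan into one side, as highlighted in the introduction.

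Next comes the central estimate. Define a modified singular map
\begin{equation*}
\Psi_n(x,u) := \mathrm{sgn}(u-u_M^-(x))\sqrt{A_n(x,u)} + \gamma\bigl(u - u_M^-(x)\bigr)_+\wedge\bigl(u_M^+(x)-u_M^-(x)\bigr),
\end{equation*}
for a small $\gamma>0$; the first term is the classical Temple-type singular map used in \cite{GJT_2019,piccoli2018general}, while the additive correction is strictly increasing on the flat region and hence restores global strict monotonicity of $u\mapsto \Psi_n(x,u)$. The goal is to prove a uniform bound $\mathrm{TV}(\Psi_n(\cdot, u^n(t,\cdot))) \leq C$ via a wave-interaction analysis: across ordinary interactions the first component is non-increasing by convexity and the second only reshuffles mass inside a flat plateau; across an interface $x_i^n$ the jump in $\Psi_n$ is controlled by $\eta(u^n)|a(x_i^n+)-a(x_i^n-)|$ from \eqref{A4} plus the jumps of $u_M^\pm$ guaranteed by \eqref{A5}. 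Summing over all interfaces yields the bound in terms of $\mathrm{TV}(a)$, $\mathrm{TV}(u_M^\pm)$ and $\mathrm{TV}(u_M^+-u_M^-)$. Since $\Psi_n(x,\cdot)$ has a uniform modulus of strict monotonicity (via \eqref{A2}), BV bounds on $\Psi_n(u^n)$ transfer to spatial compactness for $u^n$ itself; time-Lipschitz $L^1$ control comes from the equation together with the uniform $L^\infty$ bound inherited from the stationary states.

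Applying Helly's selection theorem, extract a subsequence $u^n\to u$ in $L^1_{\mathrm{loc}}([0,T]\times\R)$. To identify $u$ as an adapted entropy solution I would pass to the limit in \eqref{E2} written for $u^n$ against each admissible stationary state $k^\pm_\alpha$, using continuity of $A$ off $\Omega$ and the convergence $A_n\to A$ from \eqref{A4}; the resulting $u$ lies in $C([0,T];L^1_{\mathrm{loc}})\cap L^\infty$ by standard arguments. Uniqueness and hence convergence of the entire sequence follow from Theorem \ref{theorem1}. The main obstacle, as anticipated in the introduction, is keeping the wave count and the modified singular-map variation uniformly bounded when rarefactions are generated at infinitely many interfaces: the added $\gamma$-term is essential for converting the plateau's contribution into a controllable TV increment, and the delicate point is calibrating $\gamma$ so that its interaction cost is dominated uniformly by $\mathrm{TV}(a) + \mathrm{TV}(u_M^\pm)+\mathrm{TV}(u_M^+-u_M^-)$ independently of the discretization scale.
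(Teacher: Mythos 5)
Your overall strategy --- front tracking for a piecewise-constant-in-$x$, piecewise-linear-convex-in-$u$ approximation, compactness through a singular map augmented by a strictly increasing correction on the flat region, and uniqueness via Theorem \ref{theorem1} --- is exactly the route the paper takes (the paper uses $\Psi_A(x,u)=\pm A(x,u)$ together with the truncation map $\pi_A(x,u)$, which is your $\gamma$-term with $\gamma=1$). However, the proposal stops short at precisely the step that carries the technical weight of the existence proof. You correctly identify that interface interactions can emit rarefaction fans (because the approximate flux has affine pieces), so the number of fronts can grow, and you call "keeping the wave count and the modified singular-map variation uniformly bounded" the delicate point --- but you do not resolve it, and no calibration of the constant $\gamma$ resolves it by itself. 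The paper's resolution is a dichotomy (Lemma \ref{lemma:interface}): at any interface interaction either $TV(\Psi^\de(\cdot,u^\de(\cdot,t)))$ drops by at least a fixed positive quantity $\xi_\de$, which can happen only finitely often, or the number of fronts increases by at most $\#PL$, the total count of break points interior to affine pieces of the approximate flux; combined with Lemma \ref{lemma:contact-dis}, which shows that contact discontinuities --- the only waves triggering the front-multiplying alternative --- cannot be created by interactions away from interfaces and hence all date back to $t=0+$, this yields the global bound $\#\{\text{fronts}\}\le \#\{\text{initial fronts}\}\cdot\#PL$ and the uniform estimate \eqref{estimate:TVB-Psi-u}. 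Without an argument of this type the front tracking approximation is not even known to be defined for all $t>0$.

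Two further points. First, your singular map uses $\sqrt{A_n}$ rather than $A_n$; this is harmful, because \eqref{A4} gives $|A(x,u)-A(y,u)|\le\eta(u)|a(x)-a(y)|$ and hence only $|\sqrt{A(x,u)}-\sqrt{A(y,u)}|\le\sqrt{\eta(u)|a(x)-a(y)|}$ when one of the two values vanishes (which occurs whenever the flat intervals $[u_M^-(\cdot),u_M^+(\cdot)]$ differ at $x$ and $y$ and $u$ lies between them); sums of square roots of the jumps of a BV function need not converge, so the initial bound $TV(\Psi_n(\cdot,u_0^n))<\infty$ can fail. Using $|A|$ itself, as in \eqref{def:Psi}, avoids this (Lemma \ref{lemma:Psi}). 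Second, the theorem is stated for $u_0\in L^\infty$, while every TV estimate in your scheme requires BV-type control of the data; you still need to approximate $u_0$ by BV data and invoke the $L^1$-contraction \eqref{estimate:stability} of Theorem \ref{theorem1} to pass to the limit, as in the second step of the paper's proof.
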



\subsection{Results from two-flux theory}\label{sec:2flux}
We momentarily focus on the so-called two-flux theory where $A(x,u)=H(-x)g(u)+H(x)f(u)$, and
$H(\cdot)$ denotes the Heaviside function. 
Here $f$ and $g$ are a pair of functions $f,g:\R\rr\R$ such that \eqref{A1}--\eqref{A3} hold.
Note that there are constants $\theta_f^{\pm}$, $\theta_g^{\pm}$ such that
\begin{equation*}
u_M^{\pm}(x) = H(-x) \theta_g^{\pm} + H(x) \theta_f^{\pm}.
\end{equation*}
 \begin{definition}[Interface entropy functional, \cite{AMV}]
Interface entropy functional $I_{(\theta_g,\theta_f)}$ corresponding to the connection $(\theta_g,\theta_f)$ is defined by
\begin{equation}\label{ineq:IEF}
I_{(\theta_g,\theta_f)}(u_-,u_+):=\sgn(u_--\theta_g)(g(u_-)-g(\theta_g))-\sgn(u_+-\theta_f)(f(u_+)-f(\theta_f)),
\end{equation}
where $\theta_g,\theta_f$ are defined as follows
\begin{equation*}
\theta_g=\frac{\theta_g^-+\theta_g^+}{2}\mbox{ and }\theta_f=\frac{\theta_f^-+\theta_f^+}{2}.
\end{equation*}
\end{definition}
We note that the interface entropy function $I_{AB}$ has been introduced in \cite{AMV} for $A-B$ connection which can boil down to $I_{(\theta_g,\theta_f)}$ for $A=\theta_g,B=\theta_f$ where $\theta_g,\theta_f$ correspond to unique minima points of $f,g$ respectively. In our case, there can be more than one minima points for the fluxes $f,g$ but it turns out that similar entropy functional works in our set up as well, and it selects same the solution as the adapted entropy condition does. More precisely, we prove the following theorem,
  \begin{lemma} \label{io} Suppose $A(x,u)=H(-x)g(u)+H(x)f(u)$ for two function $f,g:\R\rr\R$ such that \eqref{A1}--\eqref{A3} hold. Then, for a weak solution of \eqref{eq:discont}--\eqref{eq:data} with left and right trace the following two are equivalent
  	\begin{enumerate}[label=E-\arabic*.]
  		\item\label{Case:K} It satisfies Kruzkov entropy away from interface and $I_{(\theta_g,\theta_f)} \geq 0$ at the interface.
  		
  		\item\label{Case:AP} It satisfies the adapted entropy inequality \eqref{ineq:adapted}.
  	\end{enumerate}
  \end{lemma}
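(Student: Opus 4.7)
\emph{Proof plan.} I would prove the equivalence by decomposing both entropy formulations into an ``interior'' (away from the interface $x=0$) part and an ``interface'' part, and verifying the equivalence in each.

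\emph{Interior part.} Away from $x=0$, the flux is either $g$ or $f$, and any stationary state $k_\al^{\pm}$ restricted to $\{x<0\}$ or $\{x>0\}$ is constant. As $\al$ ranges over $[0,\infty)$ and one switches between $\pm$, the constants $(k_\al^{\pm})_g$ sweep out all of $\R$, and likewise the $(k_\al^{\pm})_f$ sweep out all of $\R$. Hence the adapted entropy inequality \eqref{ineq:adapted} tested against $\phi\in C_c^\infty(\R\setminus\{0\})$ is equivalent to the full family of Kruzkov entropy inequalities with arbitrary constants on each half-line; this settles the ``away from interface'' part of both implications.

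\emph{Interface part.} Using the assumed existence of strong left and right traces $u_\pm(t)$ at $x=0$, and integrating \eqref{ineq:adapted} against a test function concentrating at $x=0$, the adapted entropy inequality reduces to the jump condition
\begin{equation*}
\sgn\bigl(u_-(t)-(k_\al^{\pm})_g\bigr)\bigl(g(u_-(t))-\al\bigr) \;\geq\; \sgn\bigl(u_+(t)-(k_\al^{\pm})_f\bigr)\bigl(f(u_+(t))-\al\bigr)
\end{equation*}
for a.e.~$t>0$, every $\al\geq 0$, and both choices of $\pm$. For the direction (E-2)$\Rightarrow$(E-1), I would specialize this jump condition to $\al=0$ with $(k_0^{\pm})_g=\theta_g^{\pm}$ and $(k_0^{\pm})_f=\theta_f^{\pm}$, use $g(\theta_g)=f(\theta_f)=0$, and carry out a short case analysis on the positions of $u_\pm$ relative to the flat regions $[\theta_g^-,\theta_g^+]$ and $[\theta_f^-,\theta_f^+]$ to recover $I_{(\theta_g,\theta_f)}(u_-,u_+)\geq 0$. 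For the reverse (E-1)$\Rightarrow$(E-2), the Kruzkov inequalities together with the weak-solution property imply the Rankine--Hugoniot identity $g(u_-)=f(u_+)=:\al^\ast$ at the interface; combined with $I_{(\theta_g,\theta_f)}\geq 0$ and the monotonicity of $g$ on $(-\infty,\theta_g^-]$ and $[\theta_g^+,\infty)$ (and analogously for $f$), a case-by-case argument split according to whether $\al\lessgtr\al^\ast$ and according to the position of $u_\pm$ would recover the full jump condition for every $\al\geq 0$ and both $\pm$.

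The principal obstacle should be the case analysis at the interface when $u_-$ or $u_+$ lies inside its flat region, where the sign functions appearing in both entropy formulations are ambiguous. In those degenerate configurations $g(u_-)$ or $f(u_+)$ vanishes, so one must exploit the strict monotonicity of $g,f$ outside their flats to control the nontrivial factors and match the adapted entropy jumps against $I_{(\theta_g,\theta_f)}\geq 0$ consistently.
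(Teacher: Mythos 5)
Your plan follows essentially the same route as the paper's proof: reduce to the interface via the Rankine--Hugoniot relation $g(u_-)=f(u_+)$ and the trace form of the adapted entropy inequality, obtain $I_{(\theta_g,\theta_f)}\geq 0$ by specializing to $\al=0$ (the paper tests with the midpoint stationary state $k_0(0\pm)=(u_M^-(0\pm)+u_M^+(0\pm))/2$, which yields $I_{(\theta_g,\theta_f)}\geq 0$ directly, while your choice of the endpoints $\theta^\pm$ needs the extra observation that the two sign conventions only differ when $g(u_-)$ or $f(u_+)$ vanishes), and recover the full family of jump inequalities for $\al\geq 0$ by the same case analysis on $f(u_+)\lessgtr\al$ and the position of $u_\pm$ relative to the flat regions. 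The approach is sound and matches the paper in all essentials.
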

 \begin{proof}
 Fix a $t>0$. Define $u_\pm=u(0\pm,t)$. By choosing an appropriate sequence of test functions it can be shown that $f(u_+)=g(u_-)$. Since $u$ satisfies \eqref{ineq:adapted}, by a similar argument as before, we get
 \begin{equation}\label{ineq:adapted:1}
 \sgn(u_- -k_\al(0-))(g(u_-)-\alpha)-\sgn (u_+ -k_\al(0+)) (f(u_+)-\alpha)\geq 0\mbox{ for }k_\al\in\mathscr{S}_\alpha,\al\geq0.
 \end{equation}
We take a special choice $\alpha=0$ and $k_{\alpha}(0\pm)=(u_M^{-}(0\pm)+u_M^{+}(0\pm))/2$. This gives \eqref{Case:K}. To see that \eqref{Case:K} implies \eqref{Case:AP}. We only check the adapted entropy condition at the interface. Away from the interface, $u$ satisfies \eqref{Case:AP} since $u$ satisfies the classical Kruzkov entropy condition. We fix a $k_\al\in \mathscr{S}_\al$ for $\alpha\geq0$. If $\al=0$, then, it suffices to show
\begin{equation}\label{cal:ineq:lemma1}
 \left[\sgn(u_- -k_\al(0-))-\sgn (u_+ -k_\al(0+)) \right]f(u_+)\geq 0.
\end{equation}
It is clear for $f(u_+)=0$. Without loss of generality, we assume $f(u_+)>0$. Then, \eqref{cal:ineq:lemma1} follows from inequality $I_{(\theta_g,\theta_f)} \geq 0$. Next we consider the case when $\al>0$, then, it suffices to show
\begin{equation}\label{cal:ineq:lemma2}
\left[\sgn(u_- -k^+_\al(0-))-\sgn (u_+ -k^+_\al(0+)) \right](f(u_+)-f(k_\al(0+)))\geq 0.
\end{equation}
We only check for $k_\al^+$, same argument works for $k_\al^-$. Now, observe that if $f(u_+)>0$ and $u_+\geq \frac{\theta_f^-+\theta_f^+}{2}$, then we have $u_+>\theta_f^+$. Similarly, we have that if $f(u_+)>0$ and $u_+\leq \frac{\theta_f^-+\theta_f^+}{2}$, then we have $u_+<\theta_f^-$. Rest of the proof we split into cases.
\begin{enumerate}[(i)]
	\item Suppose $f(u_+)\geq \al$. Then, there arise following two subcases.
	\begin{enumerate}
		\item If $u_+\geq \theta_f^+$, then we have $u_+\geq k_\al^+(0+)$. Since $(u_-,u_+)$ satisfies \eqref{ineq:adapted:1}, we have $u_-\geq \theta_g^+$. Therefore, we have $u_-\geq k_\alpha^+(0-)$. Hence, we have \eqref{cal:ineq:lemma2}.
		\item If $u_+\leq\theta_f^-$, then $u_+< k_\al^+(0+)$. Hence, it clearly follows \eqref{cal:ineq:lemma2}. 
	\end{enumerate}  
\item Now, suppose $f(u_+)\leq \alpha$. We have following two subcases.
\begin{enumerate}
	\item If $\theta_f^+\leq u_+$, then $\leq k_\alpha^+(0+)$ and by previous observation, we have $u_-\geq \theta_g^+$. Note that $g(u_-)\leq\al$. Hence, $u_-\leq k_\al^+(0-)$. Hence, \eqref{cal:ineq:lemma2} holds. 
	\item Now, if $u_+\leq \theta_f^-$, then we have $u_-\leq \theta_g^-$. Note that in this case, $u_+\leq k_\al^+(0+)$ and $u_-\leq k_\al^+(0-)$. Therefore, \eqref{cal:ineq:lemma2} holds.
\end{enumerate} 
\end{enumerate}
  \end{proof}

\subsection{Riemann problem solutions}\label{sec:Riemann}
Continuing to focus on the two-flux setup as in section \ref{sec:2flux}, we consider Riemann data defined as follows
\begin{equation}
u_0(x):=\left\{\begin{array}{rl}
u_l&\mbox{ for }x<0,\\
u_r&\mbox{ for }x>0.
\end{array}\right.
\end{equation}
Below, we propose solutions $u$  for possible choices of $(u_l,u_r)$. In the next subsection we prove that these solutions satisfy the adapted entropy condition, hence they are unique. The notations $\theta_{+},\theta_-$ are as in section \ref{sec:2flux}. 

\medskip
\noindent{\bf Solution of the Riemann problem:}
\begin{enumerate}
\item $u_l \leq \theta_g^{+}$ and $u_r \geq \theta_f^{-}$. 
The solution is given by
\begin{equation*}
u(x,t)=\left\{\begin{array}{cc}
   u_l
   &   x<s_1t ,\\
 \displaystyle{\left(g^{\p}|_{(-\f,\theta_g^{-}]}\right)^{-1}\left(\frac{x}{t}\right)}
   &   s_1t\leq x\leq 0, \\
 \displaystyle{\left(f^{\p}|_{[\theta_f^{+},\f)}\right)^{-1}\left(\frac{x}{t}\right)}
   &   0\leq x\leq s_2t, \\
   u_r & \text{otherwise,}
  \end{array}  \right.
  \end{equation*}
  where $s_1=g'(u_l)$ and $s_2= f'(u_r)$.
  \item $u_l \leq \theta_g^{+}$ and $u_r \leq \theta_f^{+}$. Define $u_-=\left(g|_{(-\f,\theta_g^{-}]}\right)^{-1}(f(u_r))$.
 \begin{enumerate}
 	\item  Suppose $u_l < u_-$, then the solution is given by,
 	\begin{equation*}
 	u(x,t)=\left\{\begin{array}{cc}
 	u_l &   x<s_1t ,\\
 	\displaystyle{\left(g^{\p}|_{(-\f,\theta_g^{-}]}\right)^{-1}\left(\frac{x}{t}\right)}
 	&   s_1t\leq x\leq s_2t, \\
 	u_- & s_2t<x<0,\\
 	u_r & \text{otherwise,}
 	\end{array}  \right.
 	\end{equation*}
 	where $s_1=g'(u_l)$ and $s_2= g'(u_-)$.\\
 	
 	\item Suppose $u_l > u_-$, then the solution is given by,
 	\begin{equation*}
 	u(x,t)=\left\{\begin{array}{cc}
 	u_l &   x<s_1t, \\
 	u_- & s_1t< x< 0,\\
 	u_r & \text{otherwise,}
 	\end{array}  \right.
 	\end{equation*}
 	where $s_1=\frac{g(u_l)-g(u_-)}{u_l-u_-}$.
 \end{enumerate}
   
  \item $u_l \geq \theta_g^{-}$ and $u_r \geq \theta_f^{-}$.
  Define $u_+=\left(f|_{[\theta_f^{+},\f)}\right)^{-1}(g(u_l))$.\\
  \begin{enumerate}
  	\item Suppose $u_r > u_+$, then the solution is given by,
  \begin{equation*}
u(x,t)=\left\{\begin{array}{cc}
u_l &   x<0 ,\\
   u_+ &   0<x<s_1t, \\
 \displaystyle{\left(f^{\p}|_{[\theta_f^{+},\f)}\right)^{-1}\left(\frac{x}{t}\right)}
   &   s_1t\leq x\leq s_2t, \\
 u_r & x>s_2t,
   \end{array}  \right.
  \end{equation*}
  where $s_1=f'(u_+)$ and $s_2= f'(u_r)$.
  
   \item Suppose $u_r < u_+$, then the solution is given by,
  \begin{equation*}
u(x,t)=\left\{\begin{array}{cc}
   u_l &  x< 0 ,\\
   u_+ & 0<x<s_1t,\\
   u_r & \text{otherwise,}
  \end{array}  \right.
  \end{equation*}
  where $s_1=\frac{f(u_r)-f(u_+)}{u_r-u_+}.$
  
\end{enumerate}
  \item $u_l \geq \theta_g^{-}$ and $u_r \leq \theta_f^{+}$.
     \begin{enumerate}
   	\item Suppose $g(u_l)\geq f(u_r)$. Define $u_+=\left(f|_{[\theta_f^{+},\f)}\right)^{-1}(g(u_l))$, then the solution is given by,
    \begin{equation*}
u(x,t)=\left\{\begin{array}{cc}
   u_l &   x<0 ,\\
   u_+& 0<x<s_1t,\\
   u_r & \text{otherwise,}
  \end{array}  \right.
  \end{equation*}
  where $s_1=\frac{f(u_r)-f(u_+)}{u_r-u_+}$. 
  \item Suppose $g(u_l)\geq f(u_r)$. Define $u_-=\left(g|_{(-\f,\theta_g^{-}]}\right)^{-1}(f(u_r))$, then the solution is given by,
  \begin{equation*}
u(x,t)=\left\{\begin{array}{cc}
   u_l &   x<s_1t, \\
   u_- & s_1t<x<0,\\
   u_r & \text{otherwise,}
  \end{array}  \right.
  \end{equation*} 
  where $s_1=\frac{f(u_r)-f(u_-)}{u_r-u_-}.$ 
\end{enumerate}
\end{enumerate}

\begin{lemma}\label{lemma:Riemann}
	Let $A(x,u)=f(u)H(x)+g(u)(1-H(x))$ satisfy assumptions \eqref{A1}--\eqref{A3} and \eqref{A4}--\eqref{A5}. Let $u_l,u_r\in\R$ and $u_0:\R\rr\R$ be defined as 
	\begin{equation}
	u_0(x):=\left\{\begin{array}{rl}
	u_l&\mbox{ for }x<0,\\
	u_r&\mbox{ for }x>0.
	\end{array}\right.
	\end{equation}
	Let $u$ be defined by the above four-case solution of the Riemann problem. Then, $u$ is a weak solution to \eqref{eq:discont} and satisfies the adapted entropy condition in the sense of Definition \ref{def_adapted_entropy}.
\end{lemma}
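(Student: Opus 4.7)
The plan is to apply Lemma~\ref{io}, which in the two-flux setting reduces the adapted entropy condition of Definition~\ref{def_adapted_entropy} to the conjunction of (a) the classical Kruzkov entropy inequality holding away from the interface $x=0$, and (b) the interface inequality $I_{(\theta_g,\theta_f)}(u_-,u_+)\geq 0$, where $u_\pm:=u(t,0\pm)$ are the one-sided traces. I would first verify that $u$ is a weak solution, then check (a) and (b) separately.

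For the weak-solution property, every shock in the construction carries its Rankine-Hugoniot speed by definition, and every rarefaction fan is a smooth self-similar solution of $u_t+g(u)_x=0$ (on $\{x<0\}$) or $u_t+f(u)_x=0$ (on $\{x>0\}$). The only non-automatic point is flux continuity across $x=0$, namely $g(u_-)=f(u_+)$: in Case~1 both rarefactions terminate at $\theta_g^-$ and $\theta_f^+$ respectively, where $g$ and $f$ vanish; in Case~2 the intermediate state $u_-$ is defined by $g(u_-)=f(u_r)$ with right trace $u_r$; in Cases~3 and 4(a) the intermediate $u_+$ is defined by $f(u_+)=g(u_l)$ with left trace $u_l$; in Case~4(b) the intermediate $u_-$ is defined by $g(u_-)=f(u_r)$ with right trace $u_r$. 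For Kruzkov admissibility away from $x=0$, on each half-plane the equation is a convex scalar conservation law; rarefactions are smooth, while every shock connects two states lying on a single monotone branch of $g$ or $f$, so the Oleinik chord condition follows from convexity.

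For the interface inequality, using $g(\theta_g)=f(\theta_f)=0$ the functional reduces to
\[
I_{(\theta_g,\theta_f)}(u_-,u_+)=\sgn(u_--\theta_g)\,g(u_-)-\sgn(u_+-\theta_f)\,f(u_+).
\]
A case-by-case computation shows that in fact $I=0$ throughout. When at least one interfacial flux vanishes, the weak-solution condition forces the other to vanish as well, giving $I=0$. When the common flux level is positive, any intermediate trace must lie strictly on its associated monotone branch ($u_-<\theta_g^-$ or $u_+>\theta_f^+$), and the case hypothesis on $u_l$ or $u_r$ forces the remaining trace to lie on the matching monotone branch; the two sign factors then coincide in value and the identity $g(u_-)=f(u_+)$ produces the cancellation $I=0$.

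The principal obstacle is purely bookkeeping: in each subcase one must correctly identify the two interfacial traces, especially in the degenerate situations where $u_l\in[\theta_g^-,\theta_g^+]$ or $u_r\in[\theta_f^-,\theta_f^+]$ and a rarefaction or shock collapses to a single point, so that the trace becomes $u_l$ or $u_r$ itself. Once the traces are pinned down, all verifications reduce to the identities $g(u_-)=f(u_+)$ together with $g(\theta_g)=f(\theta_f)=0$.
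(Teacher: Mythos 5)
Your proposal is correct and follows essentially the same route as the paper: the paper's proof is precisely a case-by-case verification that $I_{(\theta_g,\theta_f)}(u_-,u_+)\geq 0$ for the listed Riemann solutions combined with an application of Lemma \ref{io}. The extra details you supply (Rankine--Hugoniot and flux continuity $g(u_-)=f(u_+)$ for the weak-solution property, Oleinik admissibility off the interface via convexity, and the observation that in fact $I=0$ in every case once the traces are identified, including the degenerate subcases) are accurate and simply flesh out what the paper leaves implicit.
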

 \begin{proof}
 	Follows from case by case check of the inequality $I_{(\theta_g,\theta_f)}(u_-,u_+)\geq0$ and an application of Lemma \ref{io}
 \end{proof}

 \subsection{Singular Maps} 
 
 \begin{definition}
 	Given a function $A(\cdot,\cdot)$ satisfying our assumptions \eqref{A1}--\eqref{A3} and \eqref{A4}-\eqref{A5}, we define corresponding singular maps by
 	\begin{equation}\label{def:Psi}
 	\Psi_A(x,u) := \int\limits_{u_M(x)}^u \left|\frac{\partial}{\partial u} A(x,s) ds\right|\mbox{ where }u_M(x)=\frac{u^-_M(x)+u^+_M(x)}{2},
 	\end{equation}
 \end{definition}
 which on simplification yields, 
 \begin{equation}
 \Psi_A(x,u)=
 \left\{\begin{array}{cl}
 \displaystyle -A(x,u), & \quad \text{if } u \leq u_M^-(x),\\
 0&\quad \text{if }u_M^-(x)< u\leq u_M^+(x),\\ 
 \displaystyle A(x,u),  &   \quad \text{if } u > u_M^+(x). 
 \end{array}  \right. 
 \end{equation}
 Since for each $x\in \R,$ $u \mapsto A(x,u)$ is convex, we have $u \mapsto \Psi_A(x,u)$ is non-decreasing. From the list of Riemann problem solution mentioned before, it is clear that in general total variation of the solution may exceed the total variation of the initial data, i.e. $|u_l-u_r|.$ 
 However for we do have such result for the transformation of the solution. This observation helps in proving the TV bounds on the transformations of the solution. We conclude this discussion in the form of the following lemma.
 
 \begin{lemma}\label{lemma:tvd}
 	Let $A(x,u)=H(-x)g(u)+H(x)f(u),$ where $f$ and $g$ are convex functions having same minimum. If $u$ is the adapted entropy solution of the IVP \eqref{eq:discont}--\eqref{eq:data}, for the Riemann data $u_0$ then, for $t \geq 0$
 	\begin{eqnarray}
 	TV\left(\Psi_A(\cdot,u(\cdot,t))\right)=  TV\left(\Psi_A(\cdot,u_0(\cdot))\right).
 	\end{eqnarray}
 \end{lemma}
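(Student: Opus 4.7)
The plan is to verify the identity case-by-case against the explicit Riemann solutions collected just above, exploiting that $u\mapsto \Psi_A(x,u)$ is non-decreasing for each fixed $x$ (by convexity of $A(x,\cdot)$). Introducing the shorthand $\Psi_g(u):=\Psi_A(x,u)$ for $x<0$ and $\Psi_f(u):=\Psi_A(x,u)$ for $x>0$, the initial side is immediate: because $u_0$ and $u_M^{\pm}$ are each piecewise constant with a single jump at $x=0$,
\begin{equation*}
TV\bigl(\Psi_A(\cdot,u_0(\cdot))\bigr)=|\Psi_g(u_l)-\Psi_f(u_r)|.
\end{equation*}

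For $t>0$ I would decompose $\R$ into $(-\infty,0)$, $\{0\}$, $(0,\infty)$. Writing $u_{\pm}:=u(0\pm,t)$, in each of the Riemann cases the wave fan on each half-line is a monotone concatenation of a single shock, a single centred rarefaction, or such a wave followed by a constant intermediate state, connecting the far-field datum to the interface trace. Since $u$ varies monotonically in $x$ across this fan and $\Psi_A$ is monotone in $u$, the map $x\mapsto \Psi_A(x,u(x,t))$ is monotone on each half-line and the variation telescopes to the endpoints:
\begin{equation*}
TV\bigl(\Psi_A(\cdot,u(\cdot,t))\bigr)=|\Psi_g(u_l)-\Psi_g(u_-)|+|\Psi_g(u_-)-\Psi_f(u_+)|+|\Psi_f(u_+)-\Psi_f(u_r)|.
\end{equation*}

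The decisive step is to show that the middle term vanishes. Rankine--Hugoniot gives $g(u_-)=f(u_+)$, and Lemma \ref{io} together with the interface inequality $I_{(\theta_g,\theta_f)}(u_-,u_+)\geq 0$ forces the two traces onto matching branches of the fluxes. Scrolling through the cases, I would check that in every scenario either $u_-\leq \theta_g^-$ and $u_+\leq \theta_f^-$ simultaneously, or $u_-\geq \theta_g^+$ and $u_+\geq \theta_f^+$ simultaneously, or both traces lie inside their respective flat regions (a degenerate sub-case that forces $g(u_-)=f(u_+)=0$ and is compatible only because $f$ and $g$ share the same minimum height). In each of these three alternatives the definition of the singular map collapses to $\Psi_g(u_-)=\pm g(u_-)=\pm f(u_+)=\Psi_f(u_+)$ with coinciding signs, so $\Psi_A$ is continuous across $x=0$.

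A brief sign-consistency check then closes the argument: in every Riemann case the two surviving differences $\Psi_g(u_l)-\Psi_g(u_-)$ and $\Psi_f(u_+)-\Psi_f(u_r)$ have the same sign, or one of them vanishes because the corresponding half-line carries no wave. Hence the two absolute values add without cancellation and, combined with the continuity of $\Psi_A$ at the interface, telescope to $|\Psi_g(u_l)-\Psi_f(u_r)|$, matching the initial variation. I expect the main obstacle to be purely organisational: each of the four Riemann cases splits further according to whether $u_l$, $u_r$, $u_-$, $u_+$ sits strictly outside or strictly inside the relevant flat region, and one has to verify matching-branch continuity at $x=0$ together with sign-consistency in each sub-case; no individual verification is hard, but the equal-height-minima hypothesis is essential at every degenerate sub-case to glue the two sides together.
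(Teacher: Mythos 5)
Your proposal is correct and follows essentially the same route as the paper, which simply states that the identity ``can be verified case by case from the list of Riemann problem solutions'' and leaves the details to the reader. Your decomposition into the two half-lines plus the interface, the observation that $\Psi_g(u_-)=\Psi_f(u_+)$ forces the interface term to vanish, and the sign-consistency check supply precisely the verification the paper omits.
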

 
 \begin{proof}
 	Proof can be verified case by case from the list of Riemann problem mentioned above.
 \end{proof}
  Note that invertibilty of $\Psi$ fails on $[u_M^-(x),u_M^+(x)]$. For this we introduce a new map $\pi:\R\times\R\rr\R$ defined as follows
 \begin{equation}
 \pi_A(x,u):=\left\{\begin{array}{cl}
 u_M^{-}(x)&\mbox{ if }u\leq u_M^{-}(x),\\
 u&\mbox{ if }u_M^-(x)\leq u\leq u_M^{+}(x),\\
 u_M^+(x)&\mbox{ if }u\geq u_M^{-}(x).
 \end{array}\right.
 \end{equation}
 Note that $u\mapsto \Psi(x,u)+\pi(x,u)$ is strictly increasing function. Therefore, it is invertible for each $x\in\R$. Now we have the following lemma,
 \begin{lemma}\label{lemma:TVD:pi}
 	Let $A(x,u)=H(-x)g(u)+H(x)f(u),$ where $f$ and $g$ are convex functions having same minimum. If $u$ is the adapted entropy solution of the IVP \eqref{eq:discont}--\eqref{eq:data}, for the Riemann data $u_0$ then, for $t \geq 0$
 	\begin{eqnarray}
 	TV\left(\pi_A(\cdot,u(\cdot,t))\right)=  TV\left(\pi_A(\cdot,u_0(\cdot))\right).
 	\end{eqnarray}
 \end{lemma}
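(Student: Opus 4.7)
The plan is to mimic the case-by-case verification that would prove Lemma 3.4 (for the singular map $\Psi_A$), now applied to the projection $\pi_A$. In the two-flux setting $A(x,u) = H(-x)g(u)+H(x)f(u)$ the maps $x \mapsto u_M^{\pm}(x)$ are piecewise constant, so
\[
\pi_A(x,u) = \begin{cases} \max(\theta_g^-, \min(u,\theta_g^+)), & x<0,\\ \max(\theta_f^-, \min(u,\theta_f^+)), & x>0,\end{cases}
\]
i.e., a nondecreasing Lipschitz projection onto the flat region on each side of the interface, whose $x$-dependence is only through the sign of $x$.

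Because $u_0$ takes only the two values $u_l$ and $u_r$, one immediately has
\[
TV(\pi_A(\cdot,u_0)) = \bigl|\pi_A(0+,u_r) - \pi_A(0-,u_l)\bigr|.
\]
For the solution at time $t>0$ I would exploit the following structural facts about the Riemann fans enumerated in Section \ref{sec:Riemann}: along a left rarefaction the solution takes values in $(-\infty,\theta_g^-]$, so $\pi_A\equiv \theta_g^-$; along a right rarefaction the solution takes values in $[\theta_f^+,\infty)$, so $\pi_A\equiv \theta_f^+$. Consequently rarefactions contribute zero to $TV(\pi_A(\cdot,u(\cdot,t)))$, and the only surviving contributions are from (i) the endpoints of rarefactions, (ii) admissible shocks, and (iii) the interface $\{x=0\}$.

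With this reduction, I would run through the four Riemann cases (and each of their subcases) from Section \ref{sec:Riemann}, summing $\pi_A$-jumps and matching the result against the initial variation. Monotonicity of $\pi_A(x,\cdot)$ in $u$ on each side ensures that a chain of jumps on one side telescopes to the difference of endpoint $\pi_A$-values, so in each subcase the verification reduces to identifying the interface traces $u(0\pm,t)$ and confirming that $\pi_A(0-,u(0-,t)) = \pi_A(0-,u_l)$ and $\pi_A(0+,u(0+,t)) = \pi_A(0+,u_r)$. For instance, in Case 1 with $u_l \leq \theta_g^-$ and $u_r \geq \theta_f^+$ the sole jump of $\pi_A(\cdot,u(\cdot,t))$ sits at the interface with magnitude $|\theta_f^+-\theta_g^-|$, which matches the initial $TV$; the remaining subcases proceed by the same bookkeeping.

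The main difficulty I foresee is the combinatorial bookkeeping in the degenerate subcases: when one of $u_l, u_r$ lies in a flat region $[\theta_g^-,\theta_g^+]$ or $[\theta_f^-,\theta_f^+]$, the corresponding shock speed vanishes and a wave collapses onto the interface, so one must carefully track which interface traces survive. Once those traces are correctly identified, the cancellations are automatic from the telescoping argument above, completing the proof in parallel with Lemma \ref{lemma:tvd}.
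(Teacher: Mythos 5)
Your strategy --- reduce to the explicit Riemann fans of Section~\ref{sec:Riemann}, observe that rarefaction values lie outside the flat regions so that $\pi_A$ is constant across them, and telescope the remaining jumps --- is exactly the case-check the paper intends (it gives no written proof of this lemma at all, only the one-line remark attached to the companion Lemma~\ref{lemma:tvd}). The gap is in the step where you ``confirm that $\pi_A(0+,u(0+,t))=\pi_A(0+,u_r)$'': this identity is false in precisely the subcases you flag as delicate, and the failure is not a bookkeeping issue --- when the plateaus $[\theta_g^-,\theta_g^+]$ and $[\theta_f^-,\theta_f^+]$ differ, the asserted equality itself breaks. Concretely, take Case~3(b) with $[\theta_g^-,\theta_g^+]=[-1,0]$, $[\theta_f^-,\theta_f^+]=[1,2]$, $u_l=1/2$, $u_r=3/2$ (both fluxes convex with minimum value $0$). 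Then $g(u_l)>0=f(u_r)$, $u_+=\left(f|_{[\theta_f^+,\infty)}\right)^{-1}(g(u_l))>2$, and the solution at time $t$ is $u_l$ on $x<0$, $u_+$ on $0<x<s_1t$ with $s_1>0$, and $u_r$ beyond; hence $\pi_A(\cdot,u(\cdot,t))$ takes the successive values $0$, $2$, $3/2$, with total variation $5/2$, while $TV(\pi_A(\cdot,u_0))=|3/2-0|=3/2$. Here $u(0+,t)=u_+$ projects to $\theta_f^+=2\neq\pi_A(0+,u_r)$, and the four values $\pi_A(0-,u_l)$, $\pi_A(0-,u(0-,t))$, $\pi_A(0+,u(0+,t))$, $\pi_A(0+,u_r)$ are not monotone, so the telescoping does not close. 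The same phenomenon occurs in Cases~2(b) and~4, where the intermediate state adjacent to the interface is pinned to an endpoint of the far plateau.

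What survives, and is all the paper actually uses downstream in \eqref{estimate:TVB-Psi-u}, is the inequality $TV(\pi_A(\cdot,u(\cdot,t)))\leq TV(\pi_A(\cdot,u_0))+C\left(|\theta_f^--\theta_g^-|+|\theta_f^+-\theta_g^+|\right)$, i.e.\ equality up to an error controlled by the jumps of $u_M^{\pm}$ across the interface. Alternatively, your argument does close verbatim if ``same minimum'' is read as requiring the minimizing intervals of $f$ and $g$ to coincide, since then $\pi_A(0-,\cdot)=\pi_A(0+,\cdot)$ and the intermediate states $u_{\pm}$ project to the same plateau endpoint as the neighbouring data state, restoring monotonicity of the chain. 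You should either impose that hypothesis explicitly or weaken the conclusion to the inequality above; as stated, with distinct plateaus the identity cannot be proved because it is not true.
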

\subsection{Wave front tracking approximation}
In this section, we prove that there exists a solution to  IVP \eqref{eq:discont}--\eqref{eq:data}, extending the classical front tracking 
algorithm for a time independent flux to the case of a flux with spatial discontinuities satisfying the assumptions \eqref{A1}-\eqref{A3} and \eqref{A4}--\eqref{A5}. In other words, approximate solutions will be constructed by solving Riemann problems for an approximated equation, and then we pass to the limit in the sequence of approximate solutions to obtain the the solution of \eqref{eq:discont}--\eqref{eq:data}.  We begin this section by introducing the following definition.
\begin{definition}
	A set $\tilde{S}$ is said to be the completion of the set $S=\{u_1,\cdots,u_n\} \subset[-M,M],$ with respect to a function $A:\R \times \R \rightarrow \R$ if $\tilde{S}$ is the smallest set satisfying the following,
	$$\{u: A(x,u)=A(x,u_i), \text{ for some }x\in \R, u_i \in S\} \subset \tilde{S}.$$
	A set is said to be complete if $S=\tilde{S}.$
\end{definition}
The following lemma is an easy consequence of the above definition.
\begin{lemma}\label{lemma:completion}
	If $A$ has finitely many spatial discontinuities and satisfies the assumptions \eqref{A1} to \eqref{A3}, then given a finite set $S,$ its completion with respect to $A$ 
	is a finite set.
\end{lemma}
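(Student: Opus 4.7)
The plan is to exploit the finiteness of the spatial discontinuities together with the structure imposed on each profile $A(x,\cdot)$ by \eqref{A3}. Since $A$ has only finitely many spatial discontinuities, say at the points comprising $\Omega=\{x_1<\cdots<x_m\}$, in the piecewise-constant setting natural to the front-tracking algorithm the function $A(x,\cdot)$ takes at most $m+1$ distinct forms as $x$ varies across $\R\setminus\Omega$; call these profiles $A_1,\ldots,A_{m+1}$, each with its own flat region $[u_M^-(j),u_M^+(j)]$ at height $0$. The defining condition for $\tilde S$ then reduces to
\[
\bigcup_{i=1}^{n}\bigcup_{j=1}^{m+1}\{v\in\R:A_j(v)=A_j(u_i)\}\subset\tilde S,
\]
so it suffices to show that each of the finitely many level sets on the right-hand side is finite.

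For each fixed $j$, \eqref{A3} together with the characterisation $u_M^-(j)=\inf\{u:A_j(u)=0\}$ and $u_M^+(j)=\sup\{u:A_j(u)=0\}$ forces $A_j$ to be strictly decreasing on $(-\infty,u_M^-(j)]$ and strictly increasing on $[u_M^+(j),\infty)$, while vanishing identically on $[u_M^-(j),u_M^+(j)]$. Consequently, for any positive level $\alpha=A_j(u_i)>0$ the equation $A_j(v)=\alpha$ has exactly two solutions, one on each monotone branch, which are precisely the values $k_\alpha^\pm$ of Definition~\ref{def:states}. For the degenerate level $\alpha=0$ the solutions form the whole flat region $[u_M^-(j),u_M^+(j)]$, which in the front-tracking framework is naturally represented by its two endpoint states $u_M^\pm(j)$---consistently with the singular maps $\Psi_A$ and $\pi_A$ introduced earlier in the section, under which every interior point of the flat region is dynamically equivalent to an endpoint. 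In either case the set $\{v:A_j(v)=A_j(u_i)\}$ contributes at most two new elements to $\tilde S$.

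Combining these counts then gives the crude bound $|\tilde S|\le n+2n(m+1)<\infty$, which proves the lemma.

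The main potential pitfall I foresee is precisely the degenerate case $A_j(u_i)=0$: taken entirely literally, if some $u_i$ lies in the open flat region $(u_M^-(j),u_M^+(j))$ then $\{v:A_j(v)=A_j(u_i)\}$ equals the whole interval $[u_M^-(j),u_M^+(j)]$ and the set-theoretic completion becomes uncountable. Overcoming this requires the identification, built into the front-tracking construction and into the definition of the singular maps $\Psi_A$ and $\pi_A$, of the flat region with its two endpoint states; once this convention is in place, the counting argument above carries through unchanged.
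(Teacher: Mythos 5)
The paper offers no proof of this lemma at all (it is dismissed as ``an easy consequence of the above definition''), so there is nothing of the authors' to compare your argument against. Your one-step counting argument is the natural one, and the two caveats you raise are both genuine. Reading ``finitely many spatial discontinuities'' as ``piecewise constant in $x$ with finitely many pieces'' is an added hypothesis, but it matches the paper's own usage (compare the parenthetical in Lemma~\ref{lemma:adapted-discrete}), and without it the statement is false, since the mirror point of a fixed $u_i$ sweeps out a continuum as $A(x,\cdot)$ varies continuously in $x$. Your observation about the zero level is also correct and important: if some $u_i$ lies in a nondegenerate flat region, the literal completion contains the whole interval $[u_M^-(x),u_M^+(x)]$ and is uncountable, so the lemma only survives with the endpoint convention you describe. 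One smaller inaccuracy: the $\inf/\sup$ characterisation of $u_M^\pm$ only excludes flatness at height $0$ adjacent to the flat region; it does not make $A_j$ strictly monotone on the two branches (a plateau at a positive height is compatible with \eqref{A3} and would again produce an infinite level set). Strict monotonicity off the flat region really comes from convexity \eqref{B2}, or is implicit in the well-definedness of $k_\alpha^\pm$.

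The more serious gap is that your bound $|\tilde S|\le n+2n(m+1)$ controls a single application of the level-set operation. For the completion to do its job in the front-tracking algorithm the paper needs $P^\delta$ to be \emph{complete}, i.e.\ equal to its own completion, so that Riemann solutions at interfaces never leave $P^\delta$; this forces ``completion'' to mean the closure under iterating the operation, since each newly adjoined mirror point $v$ must in turn have its mirrors $\{w:A_{j'}(w)=A_{j'}(v)\}$ adjoined, and $A_{j'}(v)$ is in general a level not previously encountered. Your argument does not show that this iteration terminates, and in fact it need not: take two profiles $A_1(u)=u^2$ and $A_2(u)=u^2$ for $u\le 0$, $A_2(u)=2u^2$ for $u\ge 0$ (both convex, $C^1$, with the same nondegenerate minimum value $0$ at $u=0$, satisfying \eqref{A1}--\eqref{A3}). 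Reflecting a state $u>0$ across $A_2$ and then across $A_1$ sends $u$ to $u/\sqrt{2}$, so the smallest complete set containing $\{1\}$ contains $\{2^{-k/2}:k\ge 0\}$ and is infinite. Thus under the iterated reading the lemma is false without further restrictions, while under the literal one-step reading (which is what your count establishes, modulo the caveats above) the resulting set of break points need not be complete, which is what step \eqref{FT2} of the algorithm requires. This is a defect of the paper's definition and lemma rather than of your counting, but it is precisely the point at which any proof of the lemma, as it is actually used, must do additional work.
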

 
 Let $F:[a,b]\rr\R$ be a Lipschitz function and $S=\{s_i;1\leq j\leq m\mbox{ with }a=s_1<s_2<\cdots<s_m=b\}$ be a finite set of points. 
 We connect $(s_i,F(s_i))$ and $(s_{i+1},F(s_{i+1}))$ by line segments for $1\leq i\leq m-1$ and obtain a piece-wise linear function $\tilde{F}$. Henceforth we refer points of set $S$ as the break-points of the function $\tilde{F}$. 
 
 \noi\textbf{Front Tracking algorithm for discontinuous flux:}
%
%
%
 \begin{enumerate}[label=\Roman*.]
 	\item Approximate the initial data $u_0$ by a piecewise constant function $u^{\delta}_0$ with finitely many discontinuities such that $u_0^\de\rr u_0$ in $L^1_{loc}(\R)$ as $\de\rr0$ and 
 	$$TV(\Psi_A(\cdot,u^{\de}_0))+TV(\pi_A(\cdot,u^\de_0))\leq C,$$
 	for some $C>0$ depends only on $TV(u_0)$, $TV(u^\pm_M)$ and the flux $A$.
 	\item \label{FT2}Approximate $A(x,u)$ by $A^{\delta}(x,u)$ such that $x\mapsto A^{\delta}(x,\cdot)$ is piecewise constant with finitely many discontinuities  and for fixed $x$
 	the function $u \rightarrow A^{\delta}(x,u)$ is piecewise linear convex and the set of break points  $P^{\delta}$ contains the range set of $u^{\delta}_0$ and is complete with respect to $A^{\delta}$. Then, $A^\de(x,u)\rr A(x,u)$ for a.e. $x\in\R$ and $\abs{u}\leq M$ for some $M>0$. Also in addition, $k_{\alpha}^{\delta, \pm }(x) \rightarrow k_{\alpha}^{\pm}(x)$ for a.e. $x\in\R$. 
 	\item Solve the following initial value problem via the wave front tracking algorithm,
 	\begin{eqnarray}
 	\label{eqnap1} u_t+\left(A^{\delta}(x,u)\right)_x&=& 0 \quad\quad\quad \text{for}\,\,\,(x,t) \in \mathbb{\R}\times (0,\infty),\\
 	\label{eqnap2} u(x,0)&=& u^{\delta}_0(x)\,\quad\text{for}\,\,\, x\in \mathbb{R}.
 	\end{eqnarray}
 	More precisely, at each discontinuity point of $u^\de_0$ we use the Riemann solver and obtain a solution $u^\de$ for small time. Discontinuities in the solution $u^\de$ are called fronts. Often we refer a front by $(u_L,u_R)$ if $u_L$, $u_R$ are left and right states along a discontinuity line of $u^\de$. Now after the first interaction happens at time $=t_0$, we again use the Riemann solver at each discontinuity point of $u^\de(\cdot,t_0)$ and proceed. Then we show that the number of fronts remains finite and we can continue the process for all time $t>0$ to obtain an approximate solution $u^{\de}$. Then, we show that $u^\de$ satisfies the adapted entropy condition \eqref{ineq:adapted}.
 	
 	\item As $A^{\delta}(x,u) \rightarrow A(x,u)$ and  $u^{\delta}_0 \rightarrow u_0$, show that the approximate solution $u^{\delta} \rightarrow u$, where $u$ is the adapted entropy solution to \eqref{eq:discont}--\eqref{eq:data}. 
 \end{enumerate}
 
 In what follows we denote the approximate solution as $u^\de$ which corresponds to flux $A^\de$ and initial data $u_0^\de$. We use the notation $\Psi^\de$ for the singular mapping $\Psi_{A^\de}$ corresponding to $A^\de$. For consistency of notation we use $\pi^\de$ in place of $\pi_{A^\de}$. Before we proceed further we have the following remark
 \begin{remark}\label{remark:scalar}
 Suppose $n$ many fronts $\{(u_{j},u_{j+1}),1\leq j\leq n\}$ meet at some $(x_0,t_0)$ which is away from interfaces. Since $u\mapsto A^\de(x,u)$ is convex for each $x\in\R$, we have
 \begin{enumerate}[label=R-\arabic*]
 	\item\label{R-1} after interaction only one front $(u_1,u_{n+1})$ emanates. Hence \# of fronts decays,
 	\item\label{R-2} $\abs{u_1-u_{n+1}}\leq \sum\limits_{k=1}^{n}\abs{u_{k}-u_{k+1}}$.
 	\item Since $u\mapsto \Psi^\de(x,u)$ and $u\mapsto \pi(x,u)$ are non-decreasing functions, we have 
 	\begin{align*}
 	&\abs{\Psi^\de(x,u_1)-\Psi^\de(x,u_{n+1})}+\abs{\pi^\de(x,u_1)-\pi^\de(x,u_{n+1})}\\
 	&\leq \sum\limits_{k=1}^n\abs{\Psi^\de(x,u_k)-\Psi^\de(x,u_{k+1})}+\sum\limits_{k=1}^n\abs{\pi^\de(x,u_k)-\pi^\de(x,u_{k+1})}.
 	\end{align*}
 \end{enumerate}
For the proof of \eqref{R-1} and \eqref{R-2} we refer to \cite{Bressan}.  
 \end{remark}
In the next two lemmas we prove that $TV(\Psi_A(\cdot,u_0(\cdot))),TV(\pi_A(\cdot,u_0(\cdot)))<\f$ if $u_0\in BV(\R)$ and the flux $A$ satisfies \eqref{A1}--\eqref{A3} and \eqref{A4}--\eqref{A5}.
\begin{lemma}\label{lemma:Psi}
	Let $u \in BV$ and $\Psi_A$ be defined as in \eqref{def:Psi} for a flux $A$ satisfying \eqref{A1}--\eqref{A3} and \eqref{A4}--\eqref{A5}. Then the map $x\mapsto \Psi_A(x,u(x))$ is BV.
\end{lemma}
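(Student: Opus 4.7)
The plan is to reduce the estimate on $\Psi_A(\cdot,u(\cdot))$ to a TV estimate on a composition of the form $x\mapsto A(x,v(x))$ with $v\in BV(\R)$, after first rewriting the piecewise definition of $\Psi_A$ in a single global formula. The key observation is that from the three-case expression of $\Psi_A$, one can directly verify that
\begin{equation*}
\Psi_A(x,u(x)) \;=\; A\bigl(x,u(x)\vee u_M^+(x)\bigr) \;-\; A\bigl(x,u(x)\wedge u_M^-(x)\bigr),
\end{equation*}
since when $u(x)\le u_M^-(x)$ the first term equals $A(x,u_M^+(x))=0$ and the second equals $A(x,u(x))$; when $u_M^-(x)<u(x)\le u_M^+(x)$ both terms vanish; and when $u(x)>u_M^+(x)$ the second term vanishes and the first equals $A(x,u(x))$.

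Having this identity, I would set $v_+(x):=u(x)\vee u_M^+(x)$ and $v_-(x):=u(x)\wedge u_M^-(x)$. Since $\max(f,g)=\tfrac12(f+g+|f-g|)$ (and similarly for $\min$), and since $u\in BV(\R)$ together with $u_M^\pm\in BV(\R)$ (which follows from \eqref{A5} because $u_M^\pm=u_M\pm\tfrac12(u_M^+-u_M^-)$), one obtains $v_\pm\in BV(\R)$ with the explicit bound $TV(v_\pm)\le TV(u)+TV(u_M^\pm)$. In particular $v_\pm$ are bounded, say $\|v_\pm\|_{L^\infty}\le M$ for some $M>0$.

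The main step is then a triangle-inequality splitting: for any finite partition $x_0<x_1<\cdots<x_N$,
\begin{align*}
\sum_{i=1}^{N}\bigl|A(x_i,v_+(x_i))-A(x_{i-1},v_+(x_{i-1}))\bigr|
&\le \sum_{i=1}^{N}\bigl|A(x_i,v_+(x_i))-A(x_i,v_+(x_{i-1}))\bigr|\\
&\quad+ \sum_{i=1}^{N}\bigl|A(x_i,v_+(x_{i-1}))-A(x_{i-1},v_+(x_{i-1}))\bigr|.
\end{align*}
The first sum is controlled by the local Lipschitz estimate from assumption A-2, yielding at most $q(M)\,TV(v_+)$. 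The second sum is controlled by the spatial regularity assumption \eqref{A4}: each term is bounded by $\eta(v_+(x_{i-1}))\,|a(x_i)-a(x_{i-1})|$, and since $\eta$ is continuous and $v_+$ takes values in $[-M,M]$, the factor $\eta(v_+(x_{i-1}))$ is bounded by $\|\eta\|_{L^\infty([-M,M])}$, giving an overall bound $\|\eta\|_{L^\infty([-M,M])}\,TV(a)$. Taking the supremum over partitions yields
\begin{equation*}
TV\bigl(A(\cdot,v_+(\cdot))\bigr)\;\le\; q(M)\,\bigl(TV(u)+TV(u_M^+)\bigr)+\|\eta\|_{L^\infty([-M,M])}\,TV(a),
\end{equation*}
and an entirely analogous estimate holds for $A(\cdot,v_-(\cdot))$. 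Combining the two via the identity above proves $\Psi_A(\cdot,u(\cdot))\in BV(\R)$.

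I do not expect a serious obstacle here; the only delicate point is to not lose track of boundedness of $v_\pm$ when invoking the local Lipschitz constant $q(M)$ and the local sup of $\eta$. This is handled by noting that BV functions on $\R$ are bounded and that $u_M^\pm$ are BV by \eqref{A5}, so $M$ can be taken to depend only on $\|u\|_{L^\infty}$, $\|u_M^+\|_{L^\infty}$, and $\|u_M^-\|_{L^\infty}$.
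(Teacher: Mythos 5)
Your proof is correct and follows essentially the same route as the paper's: a triangle-inequality splitting of each increment of $\Psi_A(\cdot,u(\cdot))$ into a $u$-increment controlled by the local Lipschitz bound of assumption \textbf{A-2} and a spatial increment controlled by \textbf{B-1} together with the BV regularity of $u_M^{\pm}$ coming from \textbf{B-4}. The only difference is presentational: your identity $\Psi_A(x,u)=A\bigl(x,u\vee u_M^+(x)\bigr)-A\bigl(x,u\wedge u_M^-(x)\bigr)$ makes explicit (and fully justifies) the case analysis that the paper compresses into the single asserted estimate $\left|\Psi_A(x,u)-\Psi_A(y,v)\right|\le K\bigl(|u-v|+|u_M(x)-u_M(y)|+|a(x)-a(y)|\bigr)$.
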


\begin{proof} Since $TV(u(\cdot))< \infty$, we have $M_0:=||u||_{L^\infty} < \infty$. 
	Thus, we have the following,
	\begin{eqnarray*}
		\left|\Psi_A(x,u)-\Psi_A(y,v)\right|&\leq& |\Psi_A(x,u)-\Psi_A(x,v)|+|\Psi_A(x,v)-\Psi_A(y,v)|\\
		&\leq & K\left( |u-v| +|u_M(x)-u_M(y)|+\abs{a(x)-a(y)}\right).
	\end{eqnarray*}
	Hence, we get,
	\begin{eqnarray*}
		TV\left(\Psi(\cdot,u(\cdot)) \right)&=&  \sup \sum\limits_{i\in \mathbb{Z}} |\Psi(x_i,u(x_i))-\Psi(x_{i-1},u(x_{i-1}))|\\
		&\leq & K \left(TV (u) + TV (u_M)+TV(a)\right)<\f.
	\end{eqnarray*}
\end{proof}
 \begin{lemma}\label{lemma:app:pi}
 	Let $w\in BV(\R)$ and $A$ be a flux satisfying \eqref{A1}--\eqref{A3} and \eqref{A4}--\eqref{A5}. Then the map $x\mapsto \pi_A(x,w(x))$ belongs to $BV(\R)$.
 \end{lemma}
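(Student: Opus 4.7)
The plan is to mimic the argument used in Lemma~\ref{lemma:Psi}: establish a joint Lipschitz-type estimate for $\pi_A$ in both its arguments (where ``Lipschitz in $x$'' is measured against the BV weights $u_M^\pm$), and then pass from pointwise estimates on consecutive partition points to a bound on the total variation.

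First I would record the regularity of $\pi_A$ in each variable separately. Writing $\pi_A(x,u)=\min\bigl(\max(u,u_M^-(x)),u_M^+(x)\bigr)$, the fact that both $\max(\cdot,c)$ and $\min(\cdot,c)$ are $1$-Lipschitz immediately gives
\begin{equation*}
|\pi_A(x,u)-\pi_A(x,v)|\leq |u-v|\quad\text{for every }x\in\R,
\end{equation*}
and, holding $v$ fixed,
\begin{equation*}
|\pi_A(x,v)-\pi_A(y,v)|\leq |u_M^-(x)-u_M^-(y)|+|u_M^+(x)-u_M^+(y)|.
\end{equation*}
Combining these by the triangle inequality yields the master estimate
\begin{equation*}
|\pi_A(x,u)-\pi_A(y,v)|\leq |u-v|+|u_M^-(x)-u_M^-(y)|+|u_M^+(x)-u_M^+(y)|.
\end{equation*}

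Next I would observe that assumption \ref{A5} delivers $u_M^\pm\in BV(\R)$: since $u_M=(u_M^-+u_M^+)/2\in BV(\R)$ and $u_M^+-u_M^-\in BV(\R)$, the identities $u_M^\pm=u_M\pm (u_M^+-u_M^-)/2$ place both $u_M^+$ and $u_M^-$ in $BV(\R)$. Then, for any finite partition $\{x_i\}$ of $\R$, applying the master estimate at $(x_i,w(x_i))$ and $(x_{i-1},w(x_{i-1}))$ and summing gives
\begin{align*}
\sum_{i}|\pi_A(x_i,w(x_i))-\pi_A(x_{i-1},w(x_{i-1}))|
&\leq \sum_{i}|w(x_i)-w(x_{i-1})|\\
&\quad+\sum_{i}|u_M^-(x_i)-u_M^-(x_{i-1})|+\sum_{i}|u_M^+(x_i)-u_M^+(x_{i-1})|.
\end{align*}
Taking the supremum over all partitions yields
\begin{equation*}
TV\bigl(\pi_A(\cdot,w(\cdot))\bigr)\leq TV(w)+TV(u_M^-)+TV(u_M^+)<\infty,
\end{equation*}
which is finite by the BV hypothesis on $w$ and the previous paragraph. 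Boundedness of $\pi_A(\cdot,w(\cdot))$ follows from $\|w\|_\infty<\infty$ together with boundedness of $u_M^\pm$ (a consequence of their being in $BV(\R)$), completing the proof.

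I do not anticipate a serious obstacle: the map $\pi_A(x,\cdot)$ is a non-expansive truncation onto the moving interval $[u_M^-(x),u_M^+(x)]$, so variation is automatically controlled by the variations of $w$ and of the two endpoint functions. The only point that requires a moment's care is deducing $u_M^\pm\in BV$ from \ref{A5}, which is why I made that step explicit above.
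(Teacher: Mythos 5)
Your proposal is correct and follows essentially the same route as the paper: the $1$-Lipschitz bound in $u$, the bound $|\pi_A(x,v)-\pi_A(y,v)|\leq |u_M^-(x)-u_M^-(y)|+|u_M^+(x)-u_M^+(y)|$, the deduction of $u_M^\pm\in BV(\R)$ from \ref{A5}, and summation over partitions. Your justification of the $x$-dependence via the representation $\pi_A(x,u)=\min\bigl(\max(u,u_M^-(x)),u_M^+(x)\bigr)$ and the non-expansiveness of $\min$ and $\max$ is in fact a slightly more explicit argument than the paper's terse remark, but the substance is identical.
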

\begin{proof}
	Let $x_1,x_2\in\R$. Then we have
	\begin{align}
	\abs{\pi_A(x_1,w(x_1))-\pi_A(x_2,w(x_2))}&\leq \abs{\pi_A(x_1,w(x_1))-\pi_A(x_1,w(x_2))}\nonumber\\
	&+\abs{\pi_A(x_1,w(x_2))-\pi_A(x_2,w(x_2))}.\label{inequality:pi1}
	\end{align}
	From the definition of $\pi_A$, $\abs{\pi_A(x,u_1)-\pi_A(x,u_2)}\leq \abs{u_1-u_2}$ for $x\in\R$ and $u_1,u_2\in\R$. We note that $u_M^-(x)\leq \pi_A(x,u)\leq u_M^+(x)$ and $u_M^-(y)\leq \pi_A(y,u)\leq u_M^+(y)$ for $x,y,u\in\R$. Subsequently, 
	\begin{equation}\label{ineq:pi2}
	\abs{\pi_A(x,u)-\pi_A(y,u)}\leq \abs{u_M^-(x)-u_M^-(y)}+\abs{u_M^+(x)-u_M^+(y)}\mbox{ for }x,y\in\R\mbox{ and }u\in\R.
	\end{equation}
	From \eqref{inequality:pi1} we have
    $$
	\abs{\pi_A(x_1,w(x_1))-\pi_A(x_2,w(x_2))}\leq \abs{w(x_1)-w(x_2)}+\abs{u_M^-(x)-u_M^-(y)}+\abs{u_M^+(x)-u_M^+(y)}.
	$$
  From \eqref{A5} we have $u^+_M(\cdot),u^-_M(\cdot)\in BV(\R)$. This completes the proof of Lemma \ref{lemma:app:pi}.
\end{proof}

 We first prove the convergence of front-tracking approximation for BV data $u_0$. By Lemma \ref{lemma:Psi} and \eqref{lemma:app:pi} we obtain
 \begin{equation}
 TV(\Psi_A(\cdot,u_0(\cdot)))+TV(\pi_A(\cdot,u_0(\cdot)))<\f.
 \end{equation}
From the definition of $\Psi_A$ we can show that
\begin{equation}
\abs{\Psi_A(x,u)-\Psi_A(y,u)}\leq C[\abs{u_M(x)-u_M(y)}+\abs{a(x)-a(y)}]\mbox{ for }x,y\in\R,u\in[m,M]
\end{equation}
where $C$ depends only on $m,M$. Hence we have $TV(\Psi^\de(\cdot,u^\de_0))\leq TV(\Psi_A(\cdot,u^\de_0))+ C[TV(u_M)+TV(a)]$. 

By using \eqref{A5} and \eqref{ineq:pi2} we obtain
\begin{equation}
TV(\pi^\de(\cdot,u_0^\de(\cdot)))\leq TV(\pi_A(\cdot,u_0^\de(\cdot)))+TV(u_M(\cdot))+TV\left(u_M^+(\cdot)-u_M^-(\cdot)\right).
\end{equation}
Hence, 
\begin{equation}
 TV(\Psi_A^\de(\cdot,u_0^\de(\cdot)))+TV(\pi_A^\de(\cdot,u_0^\de(\cdot)))\leq C_1,
\end{equation}
for some $C_1>0$ does not depend on $\de$.
 
 We wish to keep track of fronts coming from the affine part of $A(x,u)$. 
 In this regards we would like to mention that recently in \cite{AG} it has been proved that rarefactions cannot occur if the flux does not have any affine part. Since we are considering more general fluxes which may contain affine parts, rarefactions can occur from interactions at the interfaces. That is why it deserves a special treatment in counting the number of fronts. For that purpose we define $Q(x)$ and $\# PL$ as follows.
 
  Let $Q(x)$ be the set of all pairs $(a,b)\in\R^2$ such that $A(x,u)$ is affine on $[a,b]$ and if $A(x,u)$ is affine on $[c,d]$ containing $[a,b]$, then $a=c,b=d$. Similarly we define the set $Q^\de(x)$ as the set of all pairs $(a,b)\in\R^2$ such that the following holds (i) $a,b\in P^\de$ with $(a,b)\cap P^\de\neq\emptyset$, (ii) the point $(v,A(x,v))$ lies on the straight line joining $(a,A(x,a))$ and $(b,A(x,b))$ for all $u\in (a,b)\cap P^\de$ and (iii) if $(c,d)\in Q^\de$ such that $[a,b]\subset [c,d]$ then $a=c$ and $b=d$.  
  Note that by our choice of $A^\de$, we have $A^\de(x,u)=\sum\limits_{j=0}^{k}A(x_j,u)\chi_{(x_j,x_{j+1})}$ for $u\in P^\de$ where $-\f\leq x_0<x_1<\cdots<x_k<x_{k+1}\leq\f$. Then, we define
 \begin{equation}\label{def:PL}
 \#PL=\sum\limits_{j=0}^{k}\sum\limits_{(a,b)\in Q(x_j)}\#((a,b)\cap P^\de).
 \end{equation}
 Observe that $\#((a,b)\cap P^\de)=\#([a,b]\cap P^\de)-2$. The quantity $\# PL$ defined above depends on $\de$. For the sake of simplicity we suppress the parameter $\de$ in the notation. We also use the notation $\#PL([r_1,r_2],x)$ for the number $\#((r_1,r_2)\cap P^\de)$ where the pair $(r_1,r_2)\in Q^\de(x)$. The following lemma characterizes shocks under adapted entropy solution. 
 \begin{lemma}\label{lemma:adapted-discrete}
 	For the flux $A(\cdot,\cdot)$ with finitely many spatial discontinuities (piecewise constant on spatial variable) and satisfying the assumptions \eqref{A1}--\eqref{A3} and \eqref{A4}--\eqref{A5}, a piecewise constant weak solution of \eqref{eq:discont}--\eqref{eq:data} satisfies the adapted entropy condition if and only if  the following holds:
 	\begin{enumerate}[label=S-\arabic*.]
 		\item $A(a-,u(a-,t))=A(a+,u(a+,t))$ for a discontinuity point $(a,t)\in\R\times(0,\f)$.
 		\item\label{one} Any discontinuity away form the interface of discontinuity separating the states  $(u_L,u_R)$ satisfies one of the following 
 		\begin{enumerate}
 			\item $u_L>u_R$,
 			\item there exists no break points in $(u_R,u_L) $,
 			\item if there is a break-point in  $(u_R,u_L) $, then $u\mapsto A(x,u)$ is affine on $(u_R,u_L) $.
 		\end{enumerate}
 		\item\label{two}[Interface discontinuity:] The discontinuity at the interface $x=a$ separating the states  $(u_L,u_R)$ satisfies one of the following
 		\begin{enumerate} 
 			\item $u_L\leq u_M^{+}(a_-)$ and $u_R\leq u_M^{+}(a_+)$, 
 			\item $u_L\geq u_M^{-}(a_-)$ and $u_R\geq u_M^{-}(a_+)$,
 			\item $u_L\geq u_M^{-}(a_-)$ and $u_R\leq u_M^{+}(a_+).$
 		\end{enumerate}
 	\end{enumerate}
 \end{lemma}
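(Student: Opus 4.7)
My plan is to localize the adapted entropy inequality \eqref{ineq:adapted} around an arbitrary line of discontinuity of the piecewise constant solution. Since $A(\cdot,\cdot)$ is piecewise constant in $x$ with only finitely many interfaces, every discontinuity line of the solution either lies in a strip $(\alpha,\beta)\times(t_1,t_2)$ on which $A$ is $x$-independent, or coincides with a stationary interface $x=a$. I will treat these two situations separately by testing against $\phi$ supported in the corresponding narrow strip, so that the equivalence of the adapted entropy condition with S-1--S-3 reduces to a purely local analysis at each discontinuity.

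For a shock internal to a strip where $A$ is $x$-independent, the stationary states $k_\alpha^\pm$ become constants, so the adapted entropy inequality reduces to the classical Kruzkov entropy inequality for the spatially homogeneous convex law $u_t+A(x_0,u)_x=0$. The Oleinik/Lax characterization of admissible jumps for a convex flux then yields precisely the trichotomy S-2: a decreasing jump is always admissible; an increasing jump is admissible only when the graph of $A(x_0,\cdot)$ coincides with the chord between $(u_L,A(x_0,u_L))$ and $(u_R,A(x_0,u_R))$, which in the piecewise linear setting corresponds to either no break-point lying strictly between $u_L$ and $u_R$, or $u\mapsto A(x_0,u)$ being affine on the whole interval. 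Both implications are standard.

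At a stationary interface $x=a$, testing the weak formulation on a thin strip forces $A(a-,u_L)=A(a+,u_R)$, which is S-1. Setting $g=A(a-,\cdot)$ and $f=A(a+,\cdot)$, Lemma \ref{io} then shows that the adapted entropy condition at the interface is equivalent to the scalar inequality $I_{(\theta_g,\theta_f)}(u_L,u_R)\geq 0$ from \eqref{ineq:IEF}. What remains is an algebraic verification that this inequality is equivalent to the trichotomy S-3. I will argue by case analysis based on the position of $u_L$ relative to $[u_M^-(a-),u_M^+(a-)]$ and of $u_R$ relative to $[u_M^-(a+),u_M^+(a+)]$. The only configuration in which $I_{(\theta_g,\theta_f)}$ is strictly negative is $u_L<u_M^-(a-)$ together with $u_R>u_M^+(a+)$: here both sign-terms contribute negatively and Rankine-Hugoniot gives $I_{(\theta_g,\theta_f)}=-2f(u_R)<0$. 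This single excluded configuration is exactly the complement of the union of cases (a)--(c) in S-3, while in all the remaining sub-cases either one state lies in a flat region, making the corresponding signed term vanish by $f(\theta_f)=g(\theta_g)=0$, or the two sign-terms combine with Rankine-Hugoniot to give a non-negative quantity.

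The main obstacle will be this case analysis at the interface in the presence of degenerate (flat) regions, since one must carefully track the sign of $u_L-\theta_g$ and $u_R-\theta_f$ and propagate the flux identity $g(u_L)=f(u_R)$ across subcases where one of $g(u_L),f(u_R)$ collapses to zero. Once this bookkeeping is complete, both directions of the lemma follow uniformly by matching the excluded configuration with the negation of S-3; the rest of the argument is a routine reduction afforded by the localization to individual discontinuities.
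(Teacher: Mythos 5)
Your proposal is correct and follows essentially the same route as the paper: reduce to the Kruzkov/Oleinik characterization for the convex piecewise linear flux away from the interfaces, and at each interface combine the Rankine--Hugoniot condition with Lemma \ref{io} to translate the adapted entropy condition into the sign condition $I_{(\theta_g,\theta_f)}(u_L,u_R)\geq 0$, whose only failing configuration ($u_L<u_M^-(a-)$, $u_R>u_M^+(a+)$, giving $I=-2f(u_R)<0$) is precisely the complement of S-3. Your write-up is in fact slightly more complete than the paper's, which only records the contradiction in the excluded configuration and leaves the converse case check implicit.
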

 Before we proceed to the proof of Lemma \ref{lemma:adapted-discrete}, we  remark that away from interface discontinuities of type (a), (b), (c) of \eqref{one} will often be referred later in this article by shock, rarefaction and contact discontinuity respectively.
 \begin{proof}[Proof of Lemma \ref{lemma:adapted-discrete}:]
 	If shocks are away from the interface, the adapted entropy condition boils down to the Kruzkhov entropy condition and (\ref{one}) follows as in the case of conservation law with spatially independent flux. To prove (\ref{two}), suppose $u_L< u_M(a_-)$ and $u_R> u_M(a_+).$ Applying Rankine-Hugoniot condition along the line of discontinuity of $x\mapsto A(x,\cdot)$ we have,
 	$$(A(a_-,u_L)-A(a_-,u_M(a_-)))=(A(a_+,u_R)-A(a_+,u_M(a_+)))>0.$$
 	Substituting it in the definition of interface entropy functional, we get,
 	\begin{equation*}
 	I_{\left(u_M(a_-),u_M(a_+)\right)}=\left(\sgn(u_L-u_M(a_-))-\sgn(u_R-u_M(a_+))\right)(A(a_+,u_R)-A(a_+,u_M(a_+))) <0,
 	\end{equation*}
 	which is a contradiction to the Lemma \ref{io}, hence one of the possibilities (a), (b), (c) of \eqref{two} must hold.
 \end{proof}

 Next we define the minimum variation $\xi_\de$ for different states at $\de$-approximation level.
 \begin{equation}
 \xi_\de:=\min\limits_{0\leq j\leq k}\min\limits_{(u_1,u_2)\in P^\de\times P^\de\setminus([u_M^-(x_j),u_M^+(x_j)]\times[u_M^-(x_j),u_M^+(x_j)]),\\
 	\,u_1\neq u_2}\abs{\Psi^\de(x_j,u_1)-\Psi^\de(x_j,u_2)}.
 \end{equation}
 The quantity $\xi_\de$ helps us to show that up to finite time $T>0$ the decay of $TV (\Psi^\de(\cdot,u(\cdot,t)))$ can not happen infinitely many times which implies that the number of interactions where $TV (\Psi^\de(\cdot,u(\cdot,t)))$ decays can not happen infinitely many times. In the proof of the following lemma and in the rest of the article we sometimes use the notation $\Psi_g(v)=\Psi^\de(x_j-,v)$ and $\Psi_f(v)=\Psi^\de(x_j+,v)$. Now we have the following lemma which is key to prove finiteness of fronts.
\begin{lemma}\label{lemma:interface}
	Let $t_0$ be a time when one or many fronts hit an interface. Then the following two properties hold:
	\begin{enumerate}
		\item either $TV(\Psi^\de(\cdot,u^\de(\cdot,t_0+)))-TV(\Psi^\de(\cdot,u^\de(\cdot,t_0-)))\leq -\xi_\de$,
		\item or the number of fronts increase by at most $\# PL$ where $\# PL$ is defined as in \eqref{def:PL}.
	\end{enumerate}
\end{lemma}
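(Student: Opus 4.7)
The plan is to localize the analysis in a neighborhood of the interface $x_j$, express the local total variation of $\Psi^\de$ before and after the collision as explicit sums of jumps, and invoke Lemma~\ref{lemma:tvd} on the outgoing Riemann solution. Label the incoming states on the left of $x_j$ (from far to near) by $a_{p+1},a_p,\ldots,a_1$, where $a_1$ is the left trace at $x_j$ just before $t_0$ and the fronts $(a_{k+1},a_k)$ for $k=1,\ldots,p$ are precisely those arriving at $x_j$ at time $t_0$; analogously denote the incoming right states $b_1,\ldots,b_{q+1}$. The weak-solution trace condition gives $A^\de(x_j-,a_1)=A^\de(x_j+,b_1)$, and at $t_0+$ the algorithm solves a fresh Riemann problem with data $(u_l,u_r)=(a_{p+1},b_{q+1})$ at $x_j$ using the Riemann solver of Section~\ref{sec:Riemann}.

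First I would write the local total variation of $\Psi^\de(\cdot,u^\de(\cdot,t_0-))$ as
\begin{align*}
S_- = \sum_{k=1}^p \bigl|\Psi^\de(x_j-,a_{k+1})-\Psi^\de(x_j-,a_k)\bigr|&+\bigl|\Psi^\de(x_j-,a_1)-\Psi^\de(x_j+,b_1)\bigr|\\
&+\sum_{k=1}^q \bigl|\Psi^\de(x_j+,b_k)-\Psi^\de(x_j+,b_{k+1})\bigr|,
\end{align*}
and by Lemma~\ref{lemma:tvd} applied to the outgoing Riemann solution, the local total variation of $\Psi^\de(\cdot,u^\de(\cdot,t_0+))$ equals $S_+=\bigl|\Psi^\de(x_j-,a_{p+1})-\Psi^\de(x_j+,b_{q+1})\bigr|$. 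The telescoping triangle inequality gives $S_-\geq S_+$, with equality if and only if the finite sequence $\Psi^\de(x_j-,a_{p+1}),\ldots,\Psi^\de(x_j-,a_1),\Psi^\de(x_j+,b_1),\ldots,\Psi^\de(x_j+,b_{q+1})$ is monotone. The two alternatives of the lemma correspond exactly to this dichotomy.

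In the non-monotone case the sequence has at least one strict local extremum; a short computation shows that each such turning point costs at least twice the smaller of its two adjacent $\Psi^\de$-jumps in the quantity $S_--S_+$. Since every nonzero jump of $\Psi^\de$ between distinct break points of $P^\de$ not both in the flat region is at least $\xi_\de$ by the very definition of $\xi_\de$, and both jumps flanking a strict extremum are necessarily nonzero, one obtains $S_--S_+\geq 2\xi_\de\geq\xi_\de$, giving the first alternative.

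The main obstacle is the monotone case, in which $S_-=S_+$ and no variation is saved, so the increase in the number of fronts must be controlled directly. Here I would go through the Riemann subcases of Section~\ref{sec:Riemann}: a direct check shows that outgoing rarefactions can arise only in Cases 1, 2(a) and 3(a), and that these are compatible only with the monotone-increasing configuration, whereas the monotone-decreasing case produces shocks alone and therefore no increase in fronts. For the increasing subcase, the classification of admissible incoming fronts from Lemma~\ref{lemma:adapted-discrete} S-2 together with the equality $S_-=S_+$ force the outgoing rarefaction fans to pass through exactly the same break points as the incoming sequence on the strictly convex pieces of $A(x_j\pm,\cdot)$, so that the only outgoing fronts not matched to an incoming one are rarefaction steps through break points of $P^\de$ lying strictly inside maximal affine intervals $(a,b)\in Q(x_j)$ of $A(x_j,\cdot)$; summing over both sides of the interface bounds this extra contribution by $\#PL$ as defined in~\eqref{def:PL}, giving the second alternative.
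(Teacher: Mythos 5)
Your proposal is correct in outline and rests on the same ingredients as the paper's proof --- the Riemann solver of Section \ref{sec:Riemann}, the conservation of the $\Psi$-variation from Lemma \ref{lemma:tvd}, the admissibility classification of Lemma \ref{lemma:adapted-discrete}, and the completeness of $P^\de$ --- but you package them differently. The paper runs five explicit cases according to which side the fronts arrive from and where the traces sit relative to $\theta_{f,g}^{\pm}$, computing the change in $TV(\Psi^\de)$ in each case and only then observing that the change vanishes exactly when the $\Psi^\de$-values are monotone. You promote that monotonicity criterion to the organizing principle: the telescoping inequality $S_-\geq S_+$ with $S_+$ supplied by Lemma \ref{lemma:tvd} gives the dichotomy in one stroke, and the quantitative loss $\geq\xi_\de$ at a strict turning point is a clean way to obtain the first alternative (for the interface jump $\lvert\Psi_g(a_1)-\Psi_f(b_1)\rvert=2A(x_j+,b_1)$ you should note that it equals $\lvert\Psi_f(b_1)-\Psi_f(b_1')\rvert$ for the reflected state $b_1'\in P^\de$ supplied by completeness, so it too is $\geq\xi_\de$ when nonzero). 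This buys a shorter and more transparent argument for alternative 1 than the paper's case-by-case computation.

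Two points in the monotone case are compressed enough to need repair. First, the statement that the unmatched outgoing fronts are ``rarefaction steps through break points lying strictly inside maximal affine intervals of $A(x_j,\cdot)$'' is not literally what happens: when a contact discontinuity carried by an affine piece of, say, $f=A^\de(x_j+,\cdot)$ hits the interface, the new rarefaction fan is emitted on the \emph{opposite} side and passes through break points of $g=A^\de(x_j-,\cdot)$ in the corresponding interval $(u_m,u_-)$, which need not be an affine interval of $g$. The count is nevertheless controlled by $\#PL$ because completeness of $P^\de$ puts those break points in bijection (via equal flux levels) with the break points interior to the affine interval on the incoming side, which is exactly what \eqref{def:PL} counts; this transfer across the interface should be said explicitly. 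Second, the step that actually makes the count finite is the observation --- spelled out in the paper --- that two or more rarefaction/contact fronts arriving simultaneously from the same side in the monotone configuration would have to lie on a common affine piece and hence travel in parallel, so at most one such front can reach the interface at $t_0$; your phrase ``the classification of admissible incoming fronts \dots together with $S_-=S_+$ force \dots'' presupposes this but does not prove it. With those two repairs the argument closes.
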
 
  
  \begin{proof}
 Suppose at $t=t_0$, $l$ many fronts $(u_{m-l},u_{m-l+1}),\cdots,(u_{m-1},u_{m})$ meet the interface from left and $k$ many fronts $(u_{m+1},u_{m+2}),\cdots,(u_{m+k},u_{m+k+1})$ meet the interface $x=x_j$ from right. For our convenience we denote $f=A^\de(x_j+,\cdot)$, $g=A^\de(x_j-,\cdot)$ and $u_M^\pm(x_j-)=\theta_g^\pm,u_M^\pm(x_j+)=\theta_f^\pm$. Then, 
 \begin{enumerate}[(i)]
 	\item $f(u_{m+1})=g(u_{m})$.
 	
 	\item speeds of fronts are decreasing, in particular, we have
 	\begin{equation}
 	\frac{g(u_{m-l})-g(u_{m-l+1})}{u_{m-l}-u_{m-l+1}}\geq \frac{g(u_{m-1})-g(u_{m})}{u_{m-1}-u_{m}}\mbox{ and } \frac{f(u_{m+1})-f(u_{m+2})}{u_{m+1}-u_{m+2}}\geq \frac{f(u_{m+k})-f(u_{m+k+1})}{u_{m+k}-u_{m+k+1}}.
 	\end{equation}
 \end{enumerate}
 
 Next, we wish to show that in all possible types of interaction at the interface, either $TV$ of $\Psi$ strictly decreases or fronts at the interface are coming from affine part of flux. 
 \begin{enumerate}[label=O.\arabic*]
 	\item\label{I1} \textit{When $l=0$, $k\geq1$ and $u_{m}\leq\theta_g^{+}$:} In this case we observe that $u_{m+k+1}\leq\theta_f^{+}$. After interaction, we solve the Riemann problem $(u_m,u_{m+k+1})$ at the interface. Let $u_-=\left(g|_{(-\f,\theta_g^{-}]}\right)^{-1}(f(u_{m+k+1}))$. From section \ref{sec:Riemann}, we note that there are two possibilities: 
 	\begin{enumerate}[label=O.1.\arabic*]
 		\item\label{I11} First we consider the case when $u_->u_m$. Note that in this case either rarefaction wave or contact discontinuity or mixture of them appears. Therefore, number of fronts may increase in this case. Since $u_->u_m$ and $u_-,u_m\leq \theta_g^{+}$ we have $f(u_{m+k+1})=g(u_-)<g(u_m)=f(u_{m+1})$. By Lemma \ref{lemma:adapted-discrete} we have $u_{m+1}\leq \theta_f^{+}$. Hence $\Psi^\de(x_j-,u_m)=\Psi^\de(x_j+,u_{m+1})$. Change in $TV(\Psi)$ due to this interaction at interface is
 		\begin{align}
 		&\abs{\Psi_g(u_m)-\Psi_g(u_-)}-\sum\limits_{n=1}^{k}\abs{\Psi_f(u_{m+n})-\Psi_f(u_{m+n+1})}\nonumber\\
 		=&\abs{\Psi_f(u_{m+1})-\Psi_f(u_{m+k+1})}-\sum\limits_{n=1}^{k}\abs{\Psi_f(u_{m+n})-\Psi_f(u_{m+n+1})}.\label{change:TVPsi}
 		\end{align}
 		We observe that \eqref{change:TVPsi} becomes $0$ if and only if $\Psi_f(u_{m+1})<\cdots<\Psi_f(u_{m+k+1})$.  Since $\Psi_f$ is increasing function we have $u_{m+1}<\cdots<u_{m+k+1}$. By Lemma \ref{lemma:adapted-discrete} we get two further possibilities: (i) either $k=1$ and there is no break-point between $u_{m+1}$ and $u_{m+2}$, (ii) or $u_{m+1}$ and $u_{m+k+1}$ are connected by linear segment. Now we observe that if (i) holds then number of fronts remains same after interaction.  On the other hand if $u_{m+1}$ and $u_{m+k+1}$ are connected by linear segment then they can not meet if $k\geq2$. Hence, $k=1$. In this case number of fronts increases by $\# PL([u_{m+1},u_{m+2}],x_j)-1$. (see figure \ref{figure:WFT-1})

 		\begin{figure}[ht]
 			\centering
 			\begin{tikzpicture}

 			\draw[thick,->] (-10 ,0) -- (0,0) node[anchor=north west] {$x$};
 			\draw[thick,->] (-10 ,0) -- (-10,5) node[anchor=north east] {$t$};
 			
 			\draw[line width=0.2mm, blue] (-7 ,0) -- (-7,4.5) ;
 			\draw[line width=0.2mm, blue] (-5.8 ,0) -- (-5.8,4.5) ;
 			\draw[line width=0.2mm, blue] (-5 ,0) -- (-5,4.5) ;
 			\draw[line width=0.2mm, blue] (-3.5 ,0) -- (-3.5,4.5) ;
 			\draw[line width=0.2mm, blue] (-1.5 ,0) -- (-1.5,4.5) ;
 			\draw[line width=0.2mm, black] (-4.8 ,0) -- (-3.5,1.5)-- (-1.5,3)-- (-1,4);
 			\draw[line width=0.2mm, black] (-1.5 ,3) -- (-.8,4);
 			\draw[line width=0.2mm, black] (-1.5 ,3) -- (-0.6,4);
 			\draw[line width=0.2mm, black] (-6.4 ,0) -- (-7,2)-- (-8,3)-- (-8.5,4);
 			\draw[line width=0.2mm, black] (-7 ,0) -- (-7.6,1.5)-- (-8,3);
 			\draw[line width=0.2mm, black] (-9 ,0) -- (-7.6,1.5);

 					\draw[thick][] (-1.5,3) node[anchor=east] {\tiny $(x_0,t_0)$};
 					\draw[thick][] (-3.5,1.5) node[anchor=east] {\tiny $(x_1,t_1)$};
 					\draw[thick][] (-7,2) node[anchor=west] {\tiny $(x_2,t_2)$};
 					
 					\draw[thick][] (-8,3) node[anchor=east] {\tiny $(x_3,t_3)$};
 					\draw[thick][] (-7.6,1.5) node[anchor=east] {\tiny $(x_4,t_4)$};
 					
 				\draw[thick][] (-4.3,.8) node[anchor=east] {\tiny $u_2$};
 					\draw[thick][] (-4.2,.5) node[anchor=west] {\tiny $u_1$};
 						\draw[thick][] (-2.7,2.4) node[anchor=east] {\tiny $v_2$};
 					\draw[thick][] (-2.4,2) node[anchor=west] {\tiny $v_1$};
 						\draw[thick][] (-1.6,3.9) node[anchor=west] {\tiny $w_2$};
 					\draw[thick][] (-1.2,2.5) node[anchor=west] {\tiny $w_1$};

 					\draw[thick][] (-5.8,.5) node[anchor=east] {\tiny $u_4$};
 					\draw[thick][] (-5.6,.8) node[anchor=west] {\tiny $u_3$};
 						\draw[thick][] (-6.5,.4) node[anchor=east] {\tiny $u_5$};
 							\draw[thick][] (-7.1,1.8) node[anchor=east] {\tiny $u_6$};
 								\draw[thick][] (-7.1,3) node[anchor=east] {\tiny $u_7$};
 								\draw[thick][] (-7.5,.6) node[anchor=east] {\tiny $u_8$};
 								\draw[thick][] (-8.8,1.8) node[anchor=east] {\tiny $u_9$};

 				\draw[thick][] (-7,4.5) node[anchor=south] {\tiny $x=x_5$};
 					\draw[thick][] (-5.8,4.5) node[anchor=south] {\tiny $x=x_4$};
 						\draw[thick][] (-4.7,4.5) node[anchor=south] {\tiny $x=x_3$};
 							\draw[thick][] (-3.5,4.5) node[anchor=south] {\tiny $x=x_2$};
 								\draw[thick][] (-1.5,4.5) node[anchor=south] {\tiny $x=x_1$};
 			\end{tikzpicture}
 			\caption{This picture illustrates the interaction of fronts (i) at interface and (ii) away from interfaces. In this figure, we consider 5 interfaces at $x=x_i,i=1,\cdots,5$.  Fronts $(u_1,u_2)$ and $(v_1,v_2)$ are contact discontinuities and at time $t=t_0$ rarefaction fronts are created from interface $x=x_0$. On the other hand $(x_3,t_3),\,(x_4,t_4)$ represent the interaction points which are away from interface. We notice that only one shock created after the interaction at $(x_3,t_3)$ and $(x_4,t_4)$. }\label{figure:WFT-1}
 		\end{figure}
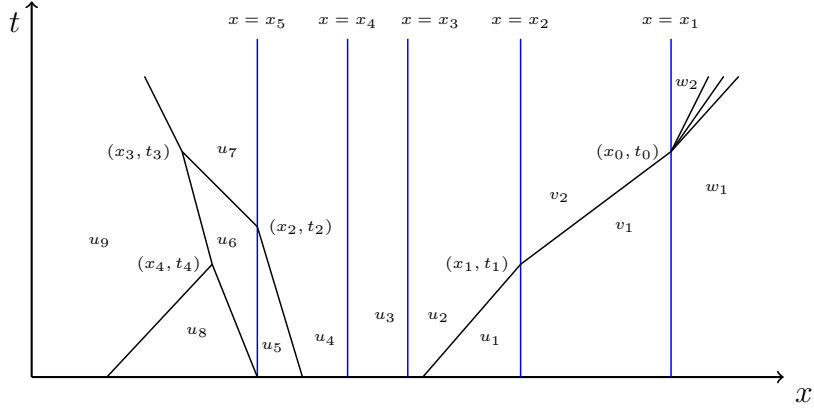

 		\item If  $u_-<u_m$ then shock appears on left side of the interface. Hence number of fronts does not increase.
 	\end{enumerate}
 	\item\label{I2}\textit{When $l=0$, $k\geq1$ and $u_{m}>\theta_g^{-}$:} We observe that $u_{m+k+1}<\theta_f^{+}$ since $(u_{m+k},u_{m+k+1})$ front meets the interface from right. Hence by the Riemann problem solutions as in section \ref{sec:Riemann} we know that only one shock appears either on right or left side of the interface. Hence number of fronts does not increase. (see figure \ref{figure:WFT-2})
 	\item\label{I3}\textit{When $l\geq1$, $k=0$ and $u_{m+1}\geq\theta_f^-$:} This case is similar to (\ref{I1}). Here we note that $u_{m-l}\leq \theta_g^+$. After interaction at interface, we solve the Riemann problem $(u_{m-l},u_{m+1})$ according to section \ref{sec:Riemann}. Let $u_+=\left(f|_{[\theta_f^{+},\f)}\right)^{-1}(g(u_{m-l}))$. By previous observation we know that there are two possibilities:
 	\begin{enumerate}[label=O.3.\arabic*]
 		\item\label{I31} First we consider the case when $u_+<u_{m+1}$. Note that in this case either rarefaction wave or contact discontinuity or mixture of them appears. Therefore, number of fronts may increase in this case. Since $u_+<u_{m+1}$ and $u_+,u_{m+1}\geq \theta_f^-$ we have $g(u_{m-l})=f(u_+)<f(u_{m+1})=g(u_{m})$. By Lemma \ref{lemma:adapted-discrete} we have $u_{m}\geq \theta_g^-$. Hence $\Psi^\de(x_j-,u_m)=\Psi^\de(x_j+,u_{m+1})$. Change in $TV(\Psi^\de)$ due to this interaction at interface is
 		\begin{align}
 		&\abs{\Psi_f(u_{m+1})-\Psi_f(u_+)}-\sum\limits_{n=1}^{l}\abs{\Psi_g(u_{m-n})-\Psi_g(u_{m-n+1})}\nonumber\\
 		=&\abs{\Psi_g(u_{m})-\Psi_g(u_{m-l})}-\sum\limits_{n=1}^{l}\abs{\Psi_g(u_{m-n})-\Psi_g(u_{m-n+1})}.\label{change:TVPsi1}
 		\end{align}
 		We observe that \eqref{change:TVPsi1} becomes $0$ if and only if $\Psi_g(u_{m-l})<\cdots<\Psi_g(u_{m})$.  Since $\Psi_g$ is increasing function we have $u_{m-l}<\cdots<u_{m}$. By Lemma \ref{lemma:adapted-discrete} we get two possibilities: (i) either $k=1$ and there is no break-point between $u_{m-l}$ and $u_{m}$, (ii) or $u_{m-l}$ and $u_{m}$ are connected by linear segment. Now we observe that if (i) holds then number of fronts remains same after interaction. Note that if $u_{m-l}$ and $u_{m}$ are connected by linear segment then they can not meet if $l\geq2$. Hence, $l=1$. In this case number of fronts increases by $\# PL([u_{m-1},u_{m}],x_j)-1$.

 				 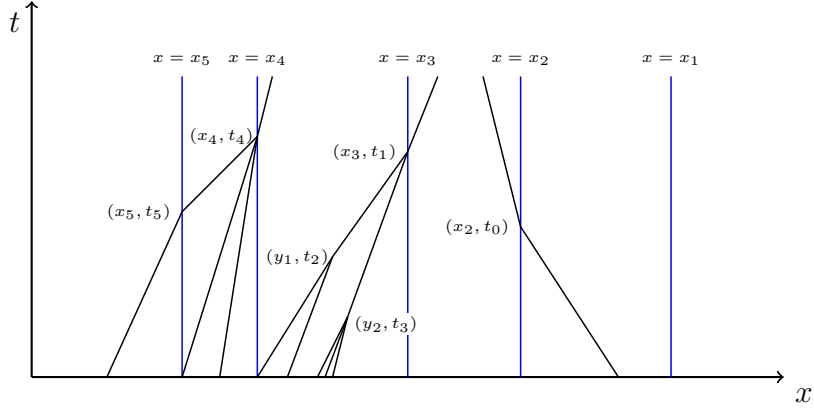
\begin{figure}[ht]
 			\centering
 			\begin{tikzpicture}

 			\draw[thick,->] (-10 ,0) -- (0,0) node[anchor=north west] {$x$};
 			\draw[thick,->] (-10 ,0) -- (-10,5) node[anchor=north east] {$t$};
 			
 				\draw[line width=0.2mm, blue] (-8 ,0) -- (-8,4) ;
 			\draw[line width=0.2mm, blue] (-7 ,0) -- (-7,4) ;
 		
 			\draw[line width=0.2mm, blue] (-5 ,0) -- (-5,4) ;
 			\draw[line width=0.2mm, blue] (-3.5 ,0) -- (-3.5,4) ;
 			\draw[line width=0.2mm, blue] (-1.5 ,0) -- (-1.5,4) ;
 			\draw[line width=0.2mm, black] (-7 ,0) -- (-6,1.6)-- (-5,3)-- (-4.6,4);
			\draw[line width=0.2mm, black] (-6 ,1.6) -- (-6.6,0);
 			\draw[line width=0.2mm, black] (-6.2,0) -- (-5.8,.8) -- (-5,3);
 			\draw[line width=0.2mm, black] (-5.8,0.8) -- (-6.1,0);
 			\draw[line width=0.2mm, black] (-5.8 ,0.8) -- (-6,0);
 			\draw[line width=0.2mm, black] (-2.2 ,0) -- (-3.5,2)-- (-4,4);
 			\draw[line width=0.2mm, black] (-9 ,0) -- (-8,2.2)-- (-7,3.2)--(-6.8,4);
 			\draw[line width=0.2mm, black] (-8 ,0) -- (-7,3.2);
 			\draw[line width=0.2mm, black] (-7.5 ,0) -- (-7,3.2);

 			\draw[thick][] (-8,4) node[anchor=south] {\tiny $x=x_5$};
 			\draw[thick][] (-7,4) node[anchor=south] {\tiny $x=x_4$};
 			\draw[thick][] (-5,4) node[anchor=south] {\tiny $x=x_3$};
 			\draw[thick][] (-3.5,4) node[anchor=south] {\tiny $x=x_2$};
 			\draw[thick][] (-1.5,4) node[anchor=south] {\tiny $x=x_1$};

 			\draw[thick][] (-3.5,2) node[anchor=east] {\tiny $(x_2,t_0)$};
 			\draw[thick][] (-5,3) node[anchor=east] {\tiny $(x_3,t_1)$};
 			 				\draw[thick][] (-6.9,3.2) node[anchor=east] {\tiny $(x_4,t_4)$};
 				\draw[thick][] (-8,2.2) node[anchor=east] {\tiny $(x_5,t_5)$};
 				\draw[thick][] (-5.9,1.6) node[anchor=east] {\tiny $(y_1,t_2)$};
 				\draw (-5.75,0.7) node[fill=white, inner sep=1pt, anchor=west] {\tiny $(y_2,t_3)$};
 			\end{tikzpicture}
 			\caption{This illustrates possible wave interactions for a front tracking approximation with piecewise constant flux with 5 spatial discontinuities at $x=x_i, \,i=1,\cdots,5$. At the interaction points $(x_1,t_1)$ and $(x_4,t_4)$ more than one fronts hit the interface and as a result only one front arises from interface after the interaction. Note that the interactions at $(y_1,t_2),(y_2,t_3)$ are away from interface and only one front emanates after interaction. }\label{figure:WFT-2}
 		\end{figure}

 		\item If  $u_+>u_m$ then shock appears on right side of the interface. Hence number of fronts does not increase.
 	\end{enumerate}
 	\item\label{I4}\textit{When $l\geq1$, $k=0$ and $u_{m+1}<\theta_f^+$:} We observe that $u_{m-l}>\theta_g^-$ since $(u_{m-l},u_{m-l+1})$ front meets the interface from left. Hence by Riemann problem solutions in section \ref{sec:Riemann}, we know that only one shock appears either on right or left side of the interface. Hence number of fronts does not increase.
 	
 	\item\label{I5}\textit{When $l\geq1$, $k\geq1$:} We observe that $u_{m-l}>\theta_g^-$ since $(u_{m-l},u_{m-l+1})$ front meets the interface. Similarly, we have $u_{m+k+1}<\theta_f^+$ since $(u_{m+k},u_{m+k+1})$ front meets the interface from right. Hence by Riemann problem solution as in section \ref{sec:Riemann} we know that only one shock appears either on right or left side of the interface (see figure \ref{figure:WFT-2}). Hence number of fronts does not increase.
 \end{enumerate}
   \end{proof}
 
 As it is observed in (\ref{I11}) and (\ref{I31}), number of fronts increases if a contact discontinuity hits the interface. We want to keep track of this increase of fronts. In the next lemma we prove that a contact wave can not be generated from an interaction away from interfaces. 
 \begin{lemma}\label{lemma:contact-dis}
 	Suppose fronts $(u_j, u_{j+1})$ for $1\leq j\leq k-1$, $k\geq3$ meet away from interface. Then either $u_1>u_k$ or there is no break-point in the interval $(u_1,u_k)$.
 \end{lemma}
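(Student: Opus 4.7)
The plan is to argue by contradiction. Since the interaction point $(x_0, t_0)$ is away from every spatial interface, in a neighborhood the flux reduces to a single convex piecewise-linear function $f := A^\de(x_0,\cdot)$, each front travels at its chord speed $s_j = (f(u_{j+1})-f(u_j))/(u_{j+1}-u_j)$, and the requirement that all $k-1$ fronts coalesce at one point forces $s_1 > s_2 > \cdots > s_{k-1}$. Assuming $u_1 \le u_k$, the goal is $P^\de \cap (u_1, u_k) = \emptyset$, so I would suppose for contradiction that some $v \in P^\de$ satisfies $u_1 < v < u_k$ and let $[a, b]$ be a maximal interval on which $f$ is affine and which contains $v$, writing $m$ for the common slope there.

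The first step is the basic convexity observation that no two consecutive fronts can both be up-fronts, i.e., have $u_j < u_{j+1}$ and $u_{j+1} < u_{j+2}$: in that case convexity would yield $s_j \le f'(u_{j+1}^-) \le f'(u_{j+1}^+) \le s_{j+1}$, contradicting $s_j > s_{j+1}$. The heart of the argument is then to control the ``$v$-crossings'': fronts $(u_j, u_{j+1})$ with $v$ strictly between $u_j$ and $u_{j+1}$, classified as up-crossings when $u_j < v < u_{j+1}$ and down-crossings when $u_{j+1} < v < u_j$. By Lemma \ref{lemma:adapted-discrete}, an up-crossing cannot be a rarefaction (which admits no break-point in its interior), hence it must be a contact discontinuity on which $f$ is affine; this forces $(u_j, u_{j+1}) \subset [a,b]$ and $s_j = m$. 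Strict monotonicity of the speeds then admits at most one up-crossing. Combined with the parity observation that the sign of $u_j - v$ switches from $-$ at $j=1$ to $+$ at $j=k$, and that the first switch is necessarily an up-crossing, I conclude that there is exactly one $v$-crossing, occurring at some index $j$, with $u_1, \dots, u_j < v$, $u_{j+1}, \dots, u_k > v$, and $s_j = m$.

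To close the contradiction I examine the fronts adjacent to $j$. If $j \ge 2$, both endpoints of $(u_{j-1}, u_j)$ lie strictly below $v < b$, so convexity of $f$ yields $s_{j-1} \le f'(u_j^-) \le m = s_j$, contradicting $s_{j-1} > s_j$; hence $j = 1$. A symmetric estimate applied to $(u_{j+1}, u_{j+2})$, whose endpoints exceed $v > a$ and therefore satisfy $s_{j+1} \ge m$, forces $j+1 = k$. Taken together, $k = 2$, contradicting the hypothesis $k \ge 3$.

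The main subtlety I anticipate concerns the edge cases where $v$ is itself a genuine kink of $f$ (so the affine region ``$[a,b]$'' degenerates to $\{v\}$) or where $v$ coincides with some intermediate state $u_{j_0}$. The first case is actually the easier one: a kink strictly inside an up-front would contradict the ``$f$ affine on $(u_j, u_{j+1})$'' requirement of a contact, so no up-crossings exist at all and the parity argument is contradicted immediately. The second case is treated by applying the no-consecutive-up-fronts observation directly to the pair $(u_{j_0-1}, v)$ and $(v, u_{j_0+1})$ to rule out the up/up configuration, and then invoking the lemma recursively on the left and right sub-groups (which themselves coalesce at $(x_0, t_0)$ with strictly decreasing speeds) to locate $u_{j_0-1}$ and $u_{j_0+1}$ among the few possible break-points and eliminate the shock/shock, shock/up, and up/shock configurations by the same slope-comparison that drives the main step.
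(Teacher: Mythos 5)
Your argument is correct in its main line, and it is organized quite differently from the paper's. The paper fixes the smallest index $j_0$ for which $u_1<u_{j_0}$ and $(u_1,u_{j_0})$ contains a break-point, uses minimality to deduce $u_{j_0-1}<u_{j_0}$ and $u_{j_0-2}<u_{j_0}$, and contradicts the convexity fact that two consecutive coalescing fronts $(u_{j_0-2},u_{j_0-1})$, $(u_{j_0-1},u_{j_0})$ must satisfy $u_{j_0-2}>u_{j_0}$; speeds never appear explicitly, and the residual case $j_0=2$ is handled by a second minimal-index argument. You instead fix one break-point $v\in(u_1,u_k)$, make the strict decrease $s_1>\cdots>s_{k-1}$ of the coalescence speeds the organizing principle, and count $v$-crossings: Lemma~\ref{lemma:adapted-discrete} forces every up-crossing to be a contact with speed equal to the slope $m$ of the maximal affine piece through $v$, so there is at most one, while parity demands exactly one, and the slope comparison on the two neighbouring fronts then forces $k=2$. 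This buys a more transparent and uniform treatment of $k\ge 4$ (where the paper's pairwise fact $u_j>u_{j+2}$ alone does not suffice), at the cost of having to dispose of the degenerate positions of $v$ separately; your handling of the case where $v$ is a genuine kink is fine.

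The one soft spot is the case $v=u_{j_0}$ for an intermediate state, which you only sketch, and the recursive strategy you propose is more elaborate than needed (the induction applies only when both sub-groups contain at least two fronts, and even granting its conclusion you must still re-run the crossing count inside each sub-group to reach the decisive slope comparison). A cleaner way to absorb this case into your main argument: set $i_1=\min\{i:u_i\ge v\}$ and $i_2=\max\{i:u_i\le v\}$, both well defined with $i_1\ge 2$ and $i_2\le k-1$. Front $i_1-1$ satisfies $u_{i_1-1}<v\le u_{i_1}$, hence is an up-front with $f$ affine on $[u_{i_1-1},u_{i_1}]$ and $s_{i_1-1}=f'(v^-)$; front $i_2$ satisfies $u_{i_2}\le v<u_{i_2+1}$, hence is an up-front with $s_{i_2}=f'(v^+)\ge f'(v^-)$. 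Strict decrease of the speeds forces $i_1-1\ge i_2$, while the definitions force $i_1\le i_2+1$, so $i_1-1=i_2$ and this single front strictly crosses $v$ with all earlier states below $v$ and all later states above; that is exactly the configuration of your main case, to which your endgame applies verbatim (and it also subsumes the kink case, since an admissible up-front cannot strictly contain a kink). With that modification the proof is complete.
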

 
 \begin{proof}
 	Note that fronts $(u_j, u_{j+1})$ for $1\leq j\leq k-1$ are generated by Riemann solver. Hence, there are three possibilities: (i) $u_j>u_{j+1}$, (ii) $u_{j+1}>u_j$ and there is no break-point in the interval $(u_j,u_{j+1})$, (iii) $u_{j+1}>u_j$ and the flux is linear on the interval $(u_j,u_{j+1})$. Suppose $u_1<u_k$ and there are break-point in $(u_1,u_k)$. Let $j_0$ be the smallest index so that $u_1<u_{j_0}$ and there is atleast one break-point in $(u_1,u_{j_0})$. Now there are two possibilities:
 	\begin{enumerate}[label=L-\arabic*]
 		\item\label{L1} We first consider the case when $j_0\geq3$. Then note that from minimality of $j_0$ we have either $u_{j_0-1}<u_1$ or there is no break-point in $(u_1,u_{j_0-1})$. Hence, 
 		\begin{enumerate}
 			\item if $u_{j_0-1}<u_1$, then we have $u_{j_0-1}<u_{j_0}$ since $u_{j_0}<u_1$.
 			
 			\item if $u_{j_0-1}>u_1$ and there is no break-point in $(u_1,u_{j_0-1})$ then $u_{j_0-1}<u_{j_0}$ since $u_{j_0}$ is also a break point and $u_{j_0}>u_1$.
 			
 		\end{enumerate}
 		Therefore, we have $u_{j_0-1}<u_{j_0}$. By a similar argument, we have $u_{j_0-2}<u_{j_0}$. 
 		Since $(u_{j_0-2},u_{j_0-1})$ and $(u_{j_0-1},u_{j_0})$ meet and $f$ is convex, we have $u_{j_0-2}>u_{j_0}$. This gives a contradiction. Hence, Lemma \ref{lemma:contact-dis} is proved for this case.
 		\item Now we consider the case when $j_0=2$. Suppose $j_1$ is the smallest number in $\{3,\cdots, k\}$ such that there is atleast one break-point in $(u_1,u_{j_1})$. By a similar argument as in (\ref{L1}) we show that $j_1$ can not satisfy $j_1>3$. Now we consider when $j_1=3$. Since $(u_1,u_2)$ meets with $(u_2,u_3)$, we obtain $u_1>u_3$. This gives a contradiction. Hence Lemma \ref{lemma:contact-dis} is proved.
 	\end{enumerate}
 \end{proof}
 
 Therefore, from Lemma \ref{lemma:interface} and \ref{lemma:contact-dis}, we note that new contact discontinuity can be created only at initial time $t=0+$. Hence, we observe that
 \begin{equation}
 \# [\mbox{ of fronts at any time } t]\leq  \#[\mbox{ of fronts at time } t=0+]\times \left[\#PL \right],
 \end{equation}
 where $\# PL$ is defined as in \eqref{def:PL}. By Remark \ref{remark:scalar}, Lemma \ref{lemma:app:pi}, \ref{lemma:adapted-discrete}, \ref{lemma:interface} and \ref{lemma:contact-dis} we obtain
 \begin{align}
 TV(\Psi^\de(\cdot,u^{\de}(\cdot,t)))+TV(\pi^\de(\cdot,u^\de(\cdot,t)))&\leq TV(\Psi^\de(\cdot,u^{\de}_0))+TV(\pi(\cdot,u^\de_0))\nonumber\\
 &\leq TV(\Psi(\cdot,u_0))+TV(\pi(\cdot,u_0))\nonumber\\
 &+C[TV(u_M)+TV(u_M^+-u_M^-)+TV(a)]\nonumber\\
 &<\f.\label{estimate:TVB-Psi-u}
 \end{align}

  \begin{lemma}
 	If $u_0 \in L^{\infty},$ then there exists an $M$ such that $||u^{\delta}||_{L^{\infty}} \leq M.$
 \end{lemma}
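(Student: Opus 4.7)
The plan is to trap $u^\de$ between two stationary states of the approximate problem whose heights are uniformly bounded in $\de$, using the coercivity built into assumption \eqref{A2}. Set $M_0=\|u_0\|_{L^\f}$ and arrange the approximation so that $\|u_0^\de\|_{L^\f}\le M_0$ for every $\de$. By \eqref{A2} applied to the approximate flux,
\[
|A^\de(x,u_0^\de(x))|\le \max_{|v|\le M_0}B_2(v)+o(1)\qquad\text{as }\de\to 0.
\]
Fixing $\al$ strictly larger than $\max_{|v|\le M_0}B_2(v)$, this yields, for every $x\in\R$ and all sufficiently small $\de$,
\[
k_\al^{-,\de}(x)\le u_0^\de(x)\le k_\al^{+,\de}(x).
\]
Moreover $B_1(k_\al^{\pm,\de}(x))\le |A^\de(x,k_\al^{\pm,\de}(x))|=\al$, and since $B_1(u)\to\f$ as $|u|\to\f$, there exists $M=M(\al)$ independent of $x$ and $\de$ with $|k_\al^{\pm,\de}(x)|\le M$.

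The remaining and main task is to propagate this pointwise trap through the front tracking evolution, i.e.\ to show
\[
k_\al^{-,\de}(x)\le u^\de(t,x)\le k_\al^{+,\de}(x)\qquad\text{for all }t\ge0,\ x\in\R.
\]
Since $k_\al^{\pm,\de}$ are stationary weak solutions of the approximate equation satisfying the adapted entropy condition, and $u^\de$ is produced by Riemann solvers that themselves satisfy the adapted entropy condition (Lemma \ref{lemma:Riemann} together with Lemma \ref{io}), the trap is preserved. Concretely, one may argue either via the one-sided comparison principle embedded in the adapted entropy inequality (a Kato-type refinement of Theorem \ref{theorem1}, obtained by replacing $|u-k|$ by $(u-k)^+$ in \eqref{ineq:adapted} and repeating the doubling-of-variables argument), or by directly checking case by case from the list of Riemann solutions in Section \ref{sec:Riemann} that no intermediate wave state exits $[k_\al^{-,\de}(x),k_\al^{+,\de}(x)]$ at each $x$. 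An even shorter alternative is to note that $u^\de$ takes values only in the complete set $P^\de$, and completeness together with coercivity of $B_1$ forces $P^\de$ to be uniformly bounded.

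The delicate point is the interface Riemann problem, where rarefaction fronts can be emitted (see the cases \ref{I11} and \ref{I31} treated within Lemma \ref{lemma:interface}); one must verify that the newly emitted intermediate states remain inside the trap. Away from interfaces this is just the standard convex-flux maximum principle, and at an interface the emitted states either equal $k_{A^\de(x,u_\pm)}^{\pm,\de}(x)$ (already trapped because $|A^\de(x,u_\pm)|\le\al$) or lie on a monotone branch between two such trapped values. With these pieces in place we obtain $\|u^\de\|_{L^\f(\R\times[0,\f))}\le M$, with $M$ depending only on $\|u_0\|_{L^\f}$ and on the functions $B_1,B_2$, completing the proof.
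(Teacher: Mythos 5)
Your proposal is correct and follows essentially the same route as the paper: both arguments reduce to the observation that the flux level $A^\de(x,u^\de(t,x))$ never exceeds the maximum $\al$ attained over the initial data (your trap $k_\al^{-,\de}(x)\le u^\de(t,x)\le k_\al^{+,\de}(x)$ is exactly the statement $A^\de(x,u^\de)\le\al$), after which the coercivity of $B_1$ in \eqref{A2} converts the flux-level bound into a uniform $L^\f$ bound. The only difference is that you spell out how the flux-level bound propagates through interactions (via the Riemann solver cases, or equivalently the completeness of $P^\de$), a step the paper asserts without detail.
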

 \begin{proof}
 	Let $u^{\delta}_0 \in [-K,K]$ for some $K >0.$ Define the following, 
 	\begin{eqnarray}
 	\alpha:=\max\limits_{x\in \mathbb{R},u\in [-K,K]}A^{\delta}(x,u^{\delta}_0).
 	\end{eqnarray}
 	Thus for each of the front tracking approximations $u^{\delta},$ the following holds. 
 	\begin{eqnarray}
 	0\leq A^{\delta}(x,u_0^{\delta}(x,t)) \leq \alpha 
 	\end{eqnarray}
 	Due to the assumption \ref{A2} we have,
 	\begin{equation}
 	\alpha\leq \max\limits_{u\in [-K,K]}g(u_0^{\delta}) < \infty,
 	\end{equation}
 	which implies
 	\begin{eqnarray}g^{-1}_-(\alpha)\leq u^{\delta} \leq g^{-1}_+(\alpha).\end{eqnarray}
 	Taking $M=\max\{|g^{-1}_-(\alpha)|,|g^{-1}_+(\alpha)|\}$ completes proof.
 \end{proof}
\begin{lemma}\label{timecontinuity}
	 Let $v^\de$ be defined as $v^\de(x,t):=\Psi^\de(x,u^\de(x,t))+\pi^\de(x,u^\de(x,t))$. For $0\leq t_1<t_2,$ each of the front tracking approximation satisfies the following time continuity property.
	\begin{eqnarray}
	||v^{\delta}(\cdot,t_1)-v^{\delta}(\cdot,t_2)||_{L^1(\R)} \leq L|t_1-t_2|TV(v^{\delta}_0).
	\end{eqnarray}
\end{lemma}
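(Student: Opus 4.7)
The plan is to reduce the $L^1$ time-continuity of $v^\de$ to the TVD bound \eqref{estimate:TVB-Psi-u} by exploiting that $u^\de$ is piecewise constant on the $(x,t)$-plane with finitely many fronts travelling at uniformly bounded speeds. First I would set
\[
L := \sup_{x\in\R,\,|u|\le M}\bigl|A^\de_u(x,u)\bigr|,
\]
which is finite (and uniformly bounded in $\de$) because $u^\de\in[-M,M]$ and the slopes of the piecewise linear convex maps $u\mapsto A^\de(x,u)$ are controlled by the Lipschitz constant from assumption A-2. By the Rankine--Hugoniot condition every moving front has propagation speed at most $L$.

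Next, I would fix $0\le t_1<t_2\le T$ and partition $[t_1,t_2]$ at the finitely many interaction times produced by the front tracking construction on $[0,T]$; this finiteness is guaranteed by Lemmas \ref{lemma:interface}, \ref{lemma:contact-dis} combined with the uniform TV bound \eqref{estimate:TVB-Psi-u}. On any sub-interval $(s_k,s_{k+1})$ free of interactions, let $y_1<\cdots<y_K$ denote the positions of the moving fronts at a time $t\in(s_k,s_{k+1})$, with constant velocities $\sigma_i$ and left/right states $u_L^i,u_R^i$. For $h>0$ with $t+h\in(s_k,s_{k+1})$, no moving front sweeps across an interface of $A^\de$ during $(t,t+h)$ --- such a crossing would itself be an interaction and would mark the endpoint of the sub-interval --- so $\Psi^\de(\cdot,u)$ and $\pi^\de(\cdot,u)$ are constant in $x$ on the interval swept by each front. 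A direct computation will then give
\[
\int_\R|v^\de(x,t+h)-v^\de(x,t)|\,dx=h\sum_{i=1}^K|\sigma_i|\cdot|\Delta_iv^\de(t)|\le hL\cdot TV(v^\de(\cdot,t)),
\]
where $|\Delta_iv^\de(t)|$ denotes the jump of $v^\de(\cdot,t)$ across the $i$-th moving front and where the second inequality uses that the stationary jumps of $v^\de$ at interfaces of $A^\de$ contribute non-negatively to $TV(v^\de(\cdot,t))$.

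Reassembling the sub-interval estimates by the triangle inequality and invoking $TV(v^\de(\cdot,t))\le TV(v^\de_0)$ from \eqref{estimate:TVB-Psi-u} will give
\[
\|v^\de(\cdot,t_2)-v^\de(\cdot,t_1)\|_{L^1(\R)}\le L(t_2-t_1)\,TV(v^\de_0),
\]
which is the claimed bound. The main technical point I anticipate is the interface bookkeeping: one must verify that (i) $\Psi^\de(\cdot,u)$ and $\pi^\de(\cdot,u)$ are piecewise constant in $x$ with jumps located precisely at the interfaces of $A^\de$, so that the swept-area integral reduces cleanly to the jump of $v^\de$ at each moving front, and (ii) a moving front approaching an interface triggers an interaction that terminates the current sub-interval, ensuring that the swept interval never contains an interface. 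Both facts are direct consequences of the piecewise-constant structure of $A^\de$ chosen in step \eqref{FT2} and the front tracking construction, so modulo this bookkeeping the argument is essentially the classical $L^1$ time-continuity estimate for scalar conservation laws applied to the transformed variable $v^\de=\Psi^\de+\pi^\de$.
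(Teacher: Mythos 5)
Your argument is correct and is essentially the paper's proof: both are the classical swept-area estimate on an interaction-free time interval (fronts of bounded speed sweep a region of measure $O(h)$ weighted by their jumps), combined with the uniform TV bound \eqref{estimate:TVB-Psi-u} and a sum over the finitely many interaction times. The only cosmetic difference is that the paper first bounds $\|u^\de(\cdot,t_1)-u^\de(\cdot,t_2)\|_{L^1}$ by $TV(A^\de(\cdot,u^\de))\,|t_1-t_2|$ via Rankine--Hugoniot and then converts to $v^\de$ using the Lipschitz constant of $u\mapsto\Psi^\de(x,u)+\pi^\de(x,u)$ (so its $L$ is that Lipschitz constant), whereas you estimate $v^\de$ directly with $L$ taken as the maximal front speed; both readings yield the stated inequality.
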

\begin{proof}
	Since the approximate solution $u^\de$ satisfies the Rankine-Hugoniot condition at each discontinuity line, we have
	\begin{equation}
	\|u^\de(\cdot,t_1)-u^\de(\cdot,t_2)\|_{L^1(\R)}\leq TV(A^\de(\cdot,u^\de(\cdot,t_1))) |t_1-t_2| \mbox{ for }t_1,t_2\in [\underline{t},\overline{t}],
	\end{equation}
	where $\underline{t},\overline{t}$ are two points in $\R_+$ such that there is no wave interaction in $(\underline{t},\overline{t})$. Since 
	\[TV(A^\de(\cdot,u^\de(\cdot,t_1)))\leq TV(\Psi^\de(\cdot,u^\de(\cdot,t_1)))\leq TV(v_0^\de),\] we have
	\begin{equation}
		\|u^\de(\cdot,t_1)-u^\de(\cdot,t_2)\|_{L^1(\R)}\leq  TV(v_0^\de)|t_1-t_2|\mbox{ for }t_1,t_2>0,
	\end{equation} 
	thus we get 
	\begin{equation}
			\|v^\de(\cdot,t_1)-v^\de(\cdot,t_2)\|_{L^1(\R)}\leq L TV(v_0^\de)|t_1-t_2|\mbox{ for }t_1,t_2>0,
	\end{equation}
	where $L$ is the uniform Lipschitz constant of $u\mapsto\Psi^\de(x,u)+\pi^\de(x,u)$. 
\end{proof}

 	The following lemma shows that there exists a sequence of approximations of $u_0$ and $A,$ such that 
	part \eqref{FT2} of the front tracking algorithm holds. 
 	\begin{lemma} Let $u_0:\R\rr\R$ be a function satisfying (i) $m\leq u_0(x)\leq M$ for some $0<m<M<\f$ and (ii) $TV(u_0)<\f$. Let $A:\R \times \R \rightarrow \R$ satisfy the hypothesis \eqref{A1}-\eqref{A3} and \eqref{A4}-\eqref{A5}. Then there exist approximate sequences $\{u_0^n\}$ and $\{A^n\}$ such that
 		\begin{enumerate}[(a)]
 			\item\label{a} $u_0^n$ is piecewise constant with finitely many discontinuities and $m\leq u_0^n(x)\leq M$ for a.e. $x\in\R,\,n\geq1$. 
 			\item\label{b} $u_0^n\rr u_0$ in $L^1_{loc}(\R)$ as $n\rr\f$.
 			\item\label{c} $TV(\Psi_A(\cdot,u^n_0))+TV(\pi_A(\cdot,u^n_0))\leq C$ for $n\geq1$ where $C>0$ depends only on $TV(u_0)$, $TV(u_M^\pm)$ and the flux $A$.
 			\item $A^n:\R \times \R \rightarrow \R$ has finitely many spatial discontinuities and $A^n \rightarrow A$ pointwise in $\R \times \R$.
 			\item For fixed $x$	the function $u \rightarrow A^{n}(x,u)$ is piecewise linear convex and the set of break points  $P^{n}$ contains the range set of $u^{n}_0$ and  complete with respect to $A^{n}$. 
 				\end{enumerate}
 	\end{lemma}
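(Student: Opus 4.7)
The plan is to construct the two sequences separately, making the break point set of $A^n$ compatible with the range of $u_0^n$ in a last step.

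\textbf{Step 1 (Initial data approximation).} Since $u_0 \in BV(\R)$ with $m \leq u_0 \leq M$, I will use a standard BV step-function approximation. For $n \geq 1$, pick $L_n \uparrow \infty$ and a partition $-L_n = y_0^n < y_1^n < \cdots < y_{N_n}^n = L_n$ of $[-L_n, L_n]$ whose mesh tends to $0$, chosen to include the largest jumps of $u_0$, of $u_M^\pm$, and of $a$. On each sub-interval $[y_j^n, y_{j+1}^n)$ set $u_0^n$ equal to $u_0(y_j^n+)$, truncated to $[m,M]$, and extend $u_0^n$ by constants outside $[-L_n, L_n]$. Then $u_0^n$ is piecewise constant with finitely many jumps, $m \leq u_0^n \leq M$, $TV(u_0^n) \leq TV(u_0)$, and $u_0^n \to u_0$ in $L^1_{\mathrm{loc}}(\R)$. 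This gives \textup{(a)} and \textup{(b)}. For \textup{(c)}, apply Lemma \ref{lemma:Psi} and Lemma \ref{lemma:app:pi} to $u_0^n$: they yield
\begin{equation*}
TV(\Psi_A(\cdot,u_0^n)) + TV(\pi_A(\cdot,u_0^n)) \leq K\bigl(TV(u_0^n)+TV(u_M^\pm)+TV(a)+TV(u_M^+-u_M^-)\bigr) \leq C,
\end{equation*}
with $C$ independent of $n$.

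\textbf{Step 2 (Flux approximation, spatial part).} Since $a \in BV(\R)$, its set of jumps is countable. Choose a finite spatial grid $X^n = \{x_0^n < x_1^n < \cdots < x_{K_n}^n\}$ that contains the jump points of $u_0^n$, the $n$ largest jumps of $a$ and of $u_M^\pm$, and refines to mesh tending to $0$ on $[-L_n, L_n]$. On each open strip $(x_j^n, x_{j+1}^n)$ pick a continuity point $\zeta_j^n$ of $a(\cdot)$ and define an intermediate flux $\tilde A^n(x,u) := A(\zeta_j^n, u)$. By hypothesis \eqref{A4}, $|A(x,u) - A(\zeta_j^n,u)| \leq \eta(u)|a(x)-a(\zeta_j^n)|$, which, together with the BV continuity of $a$ at its Lebesgue points, gives $\tilde A^n(x,u) \to A(x,u)$ for a.e. $x$ and every $u$.

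\textbf{Step 3 (Piecewise linear convex approximation in $u$).} Start with a finite set $P_0^n$ consisting of the range of $u_0^n$, the values $\{u_M^\pm(\zeta_j^n) : 0 \le j \le K_n\}$, and a $1/n$-net of $[-M_\ast, M_\ast]$ for some $M_\ast > M$. Because $\tilde A^n$ has only finitely many spatial levels and each $u \mapsto \tilde A^n(\zeta_j^n,u)$ is convex, Lemma \ref{lemma:completion} ensures that the completion $P^n$ of $P_0^n$ with respect to $\tilde A^n$ is finite (each level of a convex function has at most two preimages, one on each monotone branch). Define $A^n(x,u)$ on each strip to be the piecewise linear interpolant of $\tilde A^n(\zeta_j^n, \cdot)$ through the break points in $P^n$. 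Convexity of $\tilde A^n(\zeta_j^n, \cdot)$ is preserved by piecewise linear interpolation at these nodes, so $u \mapsto A^n(x,u)$ is convex and piecewise linear with break set $P^n$; by construction $P^n$ is complete with respect to $A^n$ and contains the range of $u_0^n$, proving \textup{(e)}. Since the mesh of $P^n$ on $[-M_\ast, M_\ast]$ tends to $0$, the interpolants converge uniformly to $\tilde A^n(\zeta_j^n, \cdot)$, and hence $A^n \to A$ pointwise a.e., giving \textup{(d)}. Monotone-branch inversion then gives $k_\alpha^{n,\pm}(x) \to k_\alpha^{\pm}(x)$ at continuity points.

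\textbf{Main obstacle.} The subtle step is Step~3: making the completion compatible with the piecewise linear convex structure while keeping $P^n$ finite. The point is that the finiteness of the spatial grid forces only finitely many convex profiles $\tilde A^n(\zeta_j^n,\cdot)$ to be considered, each contributing at most two new break points per existing node via its level sets. Thus one application of the completion operation suffices, and Lemma \ref{lemma:completion} rules out an uncontrolled blow-up of $|P^n|$; without this finiteness one would lose the piecewise linear structure needed for the Riemann solver of Section \ref{sec:Riemann} to apply at each interface.
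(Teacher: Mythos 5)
Your proposal is correct and follows essentially the same three-stage construction as the paper: piecewise-constant approximation of $u_0$ with the $TV(\Psi_A)+TV(\pi_A)$ bound coming from Lemmas \ref{lemma:Psi} and \ref{lemma:app:pi}, spatial discretization of the flux controlled through hypothesis \eqref{A4}, and piecewise linear convex interpolation in $u$ on a finite completed break-point set via Lemma \ref{lemma:completion}. The only cosmetic difference is in Step 2: the paper places the spatial grid at the level sets of $F(x)=TV_{(-\infty,x]}(a)$, which yields the uniform bound $|\tilde A^n(x,u)-A(x,u)|\leq\eta(u)\epsilon$ for every $x$, whereas your refining mesh gives convergence only at continuity points of $a$ (a.e.), which is what the front-tracking argument actually uses.
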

 	\begin{proof}
 		The sequence of approximations of $A^n$ and $u^n_0$ satisfying the above properties can be constructed in the following ways.
 		\begin{enumerate}[Step-(i)]
 			\item \textit{Approximation of $u_0$ by piecewise constant functions:} Since initial data $u_0 \in BV(\R),$ we can approximate $u_0$ by a sequence of piecewise constant functions denoted by $u^{n}_0$ such that $TV(u_0^n)\leq TV(u_0)$. Therefore, by Lemma \ref{lemma:Psi} and \ref{lemma:app:pi} there exists a $C>0$ such that
 			$$TV(\Psi_A(\cdot,u_0^n))+TV(\pi_A(\cdot,u_0^n))\leq C$$
 			where $C$ depends on $TV(u_0)$, $TV(u_M^\pm)$ and the flux $A$. Hence \ref{a}, \ref{b} and \ref{c} hold. 
 			\item \textit{Approximate $A$ by a function $\tilde{A}^{n}$ having finitely many spatial discontinuities:} From assumption \ref{A4}, 
 			there exists a continuous function $\eta: \mathbb{R} \rightarrow \mathbb{R}$ and a BV function $a:\R \rightarrow \R$ such that $$|A(x,u)-A(y,u)| \leq \eta(u)|a(x)-a(y)| \quad \forall x,y,u \in \R.$$
 			Since $a\in BV(\R),$  corresponding to $a$ we define the following function $F(x):=TV_{(-\infty,x]}(a).$ Note that $F$ is an increasing function and left continuous function. We also observe that $F(x) \rightarrow TV_{\R}(a)$ as $x \rightarrow \infty$. Given $\epsilon >0,$ let $n$ be the largest integer such that $n\epsilon \leq TV_{\R}(a)$. For $j=1,2,...,n$ define the following
 			\begin{align*}
 			z_j&:=\min\{x: F(x)\geq j\epsilon\},\mbox{ for }1\leq j\leq n\mbox{ and }z_0=-\f,z_{n+1}=\f,\\
 			\tilde{A}^n(x,u)&:=A(z_j,u) \text{ for } x\in [z_{j},z_{j+1})\mbox{ for }1\leq j\leq l.
 			\end{align*}
 			For $x\in [z_i,z_{i+1})$, we get
 			\begin{eqnarray}
 			|\tilde{A}^n(x,u)-A(x,u)| &= & |A(z_i,u)-A(x,u)|\nonumber\\
 			&\leq & \eta(u) |a(z_i)-a(x)|\nonumber\\
 			&\leq & \eta(u)\epsilon.\label{cal1:A}
 			\end{eqnarray}
 			\item \textit{Approximation of $A$ by piecewise linear convex functions $A^n$ having finitely many spatial discontinuities whose break points form a complete set:}
 			Suppose $u^{n}_0$ takes values in the set $U^{n}$. Then define $P_n$ as the completion of the following set $$\left\{\frac{k}{2^n}: -2^n M\leq k \leq 2^n M, k\in \mathbb{Z}\right\}\bigcup U^{n}.$$ Due to Lemma \ref{lemma:completion} $P_n$ is a finite set.
 			For each $x \in \R$ approximate $\tilde{A^n}$ by piecewise linear convex function $A^n,$ such that $A^n$ has break up points only in $P_n.$ Clearly $A^n(x,u)=\tilde{A}^n(x,u)$ for $u\in P_n.$ Recall from previous step that for $\e>0$ we have $n\e\leq TV(a)\leq (n+1)\e$.
 			
 			\begin{eqnarray}
 			|A^n(x,u)-A(x,u)|
 			&\leq& |A^n(x,u)-\tilde{A}^n(x,u)|+|\tilde{A}^n(x,u)-A(x,u)|\nonumber\\
 			&\leq &C(M)\frac{1}{2^n}+\eta(u) \epsilon\nonumber\\
 			&\leq&C(M)\frac{1}{2^n}+\eta(u)\frac{TV_\R(a)}{n},
 			\end{eqnarray}
 			for $u\in[-M,M]$ with $M>0$.
 			\item For each $n$ define $k^{\pm}_{\alpha,n} $ by $A^{n}(x,k^{\pm}_{\alpha,n}(x))=\alpha$. Since $A^n \rightarrow A$ uniformly on compact sets and $u\mapsto A^{-1}(x,u),u\mapsto A^{-1}_n(x,u)$ are decreasing on $(-\f,u_M(x)]$ we get $k_{\alpha,n}^{\pm }(x) \rightarrow k_{\alpha}^{\pm}(x).$ 
 		\end{enumerate}
 	\end{proof}

 	The above construction assures the existence of approximations required in \eqref{FT2}. Now, we proceed to give the proof of the Theorem \ref{MT}
 	\begin{proof}[Proof of Theorem \ref{MT}:]
 		We split the proof into the following two steps.
 		\begin{description}
 		\descitem{Step-1}{existence:step-1} Consider a data ${u}_0\in BV(\R)$. Let $\{u^{\delta}\}$  be the sequence of Front Tracking approximations of \eqref{eqnap1}-\eqref{eqnap2}.
 		From Lemma \ref{lemma:tvd}, Lemma \ref{lemma:TVD:pi}, and the estimate \eqref{estimate:TVB-Psi-u}, there exists $C>0$ independent of $\delta$ such that,   
 		\[TV \left(\Psi^\de(u^{\delta} (\cdot,t)) \right)+TV \left(\pi^\de(u^{\delta} (\cdot,t))\right)\leq C, \quad \forall t\geq 0.\]
 		Now applying Helly's theorem and Lemma \ref{timecontinuity} we get, up to a sub-sequence  $\Psi^\de(x,u^{\delta}(x,t))+\pi^\de(u^\de(x,t))$ converges pointwise a.e. (say to $\Phi$), i.e. $\Psi^\de(x,u^{\delta}(x,t))+\pi^\de(x,u^\de(x,t)) \rightarrow \Phi(x,t)$ a.e. in $\R \times [0, \infty).$  Hence, due to the invertibility of $\Psi(x,\cdot)+\pi(x,\cdot),$ there exists $u(x,t)$ such that $\Phi(x,t)=\Psi_{A}(x,u(x,t))+\pi_A(x,u(x,t))$ and subsequently, we have $u^{\delta} \rightarrow u$ a.e. in $\R \times \R^+.$ Since $A^{\delta} \rightarrow A$ pointwise,  $u$ is indeed a weak solution. From Lemma \ref{lemma:Riemann} the front tracking approximations $u^{\delta}_0$ satisfy adapted entropy corresponding to $A^{\delta}$ thus the limit satisfy the adapted entropy. Since every subsequential limit satisfies adapted entropy condition the whole sequence $u^{\delta}$ converges to the adapted entropy solution $u$ of the IVP \eqref{eqnap1}-\eqref{eqnap2}.
 		\descitem{Step-2}{existence:step-2} Now we prove the existence result for $L^\f$ data $u_0$. Let $\{u_0^n\}$ be an approximation such that $u_0^n\in BV(\R)$ and $u_0^n\rr u_0$ as $n\rr\f$. 
 		From \descref{existence:step-1}{Step-1} we have an adapted entropy solution ${u}^n$ corresponding to data ${u}_0^n$. Hence by estimate \eqref{estimate:stability} we have the existence of adapted entropy solution to the data $u_0$.
 		 		\end{description}
 	\end{proof}

 \section{Study of BV regularity}
 In this section we study the regularity of adapted entropy solutions in BV space. In the next subsection, we introduce a condition on initial data to get TV bound of entropy solution when the fluxes are uniformly convex. Later we give two counter-examples to show that neither the condition on initial data nor the uniform convexity assumption on flux can be relaxed.
  \subsection{Regularity in BV space}

  To obtain BV regularity of entropy solutions we need an assumption on initial data which is  stronger than BV. 
  Let $b\in L^\f(\R)$ and $-\f<x_1<\cdots<x_{n+1}<\f$. We define a sequence $\{\De w_j\}_{1\leq j\leq n}$ as follows,
  \begin{equation}\label{def:vj}
  \De w_j:=\left\{\begin{array}{ll}
  \abs{b(x_j)-b(x_{j+1})}&\mbox{ if }A(x_j,\cdot)= A(x_{j+1},\cdot),\\
  \abs{b(x_j)-u_M(x_j)}+\abs{b(x_{j+1})-u_M(x_{j+1})}&\mbox{ if }A(x_j,\cdot)\neq A(x_{j+1},\cdot),
  \end{array}\right.
  \end{equation}
  for $1\leq j\leq n$. We define $\Lambda(b)$ as follows,
  \begin{equation}\label{def:La}
  \Lambda(b):=\sup\left\{\sum\limits_{j=1}^{n}\De w_j;\,-\f<x_1<\cdots<x_{n+1}<\f\mbox{ and }\De w_j \mbox{ is defined as in \eqref{def:vj}}\right\}
  \end{equation}
  and the set $X:=\{b\in L^{\f}(\R);\,\Lambda(b)<\f\}$. Since $u_M\in BV(\R)$, we observe that $X\subset BV(\R)$. If $x\mapsto A(x,\cdot)$ is piece-wise constant then \eqref{def:La} can be simplified in the following way: let $-\f=z_0<z_1<\cdots<z_n<z_{n+1}=\f$ be discontinuity points of $x\mapsto A(x,u)$. We can write
  \begin{equation}
   \Lambda(b)=\sum\limits_{k=0}^{n}TV_{(z_k,z_{k+1})}(b)+\sum\limits_{k=1}^{n}\left[\abs{b(z_k-)-u_M(z_k-)}+\abs{b(z_k+)-u_M(z_k+)}\right].
  \end{equation} 
  \begin{theorem}\label{theorem:bv}
 	Let $A(\cdot,\cdot)$ satisfy \eqref{A1}--\eqref{A3} and \eqref{A4}--\eqref{A5} with $u_M^+(x)=u_M^-(x)$ for $x\in\R$. Additionally, we assume that $A(\cdot,x)\in C^2(\R)$ and $\pa_{uu}A(\cdot,x)\geq C_1$ for all $x\in\R$ where $C_1>0$ does not depend on $x$. Let $u$ be an adapted entropy solution to \eqref{eq:discont}--\eqref{eq:data} with initial data $u_0$ such that $\Lambda(u_0)<\f$ where $\Lambda $ is defined as in \eqref{def:La}. Then we have	
 	\begin{equation}
 	TV(u(\cdot,t))\leq TV(u_M)+C_3\Lambda(u_0)\mbox{ for }t>0,
 	\end{equation}
 	$\mbox{ where }C_2=\sup\{\|A(\cdot,x)\|_{C^2[-M,M]},\,x\in\R\}$ and $C_3:=\left(\frac{C_2}{C_1}\right)^{\frac{3}{2}}$.
 \end{theorem}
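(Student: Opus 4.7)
The plan is to prove the bound for the front tracking approximation $u^\delta$ constructed in Section~\ref{MainResults} uniformly in $\delta$, and then pass to the limit using $u^\delta\to u$ almost everywhere together with lower semicontinuity of total variation. Throughout, exploit the simplification that the assumption $u_M^+=u_M^-=:u_M$ collapses the flat region, so that $\pi_A(x,u)\equiv u_M(x)$ and $\Psi_A(x,u)=|A(x,u)|$.

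First, decompose $TV(u^\delta(\cdot,t))$ as cellwise variation plus interface jumps. Let $z_1^\delta<\cdots<z_{n_\delta}^\delta$ denote the spatial discontinuities of $A^\delta$ and set $z_0^\delta=-\infty$, $z_{n_\delta+1}^\delta=+\infty$. Then
\[
TV(u^\delta(\cdot,t))=\sum_{k=0}^{n_\delta}TV_{(z_k^\delta,z_{k+1}^\delta)}(u^\delta(\cdot,t))+\sum_{k=1}^{n_\delta}|u^\delta(z_k^\delta+,t)-u^\delta(z_k^\delta-,t)|.
\]
Inserting $u_M^\delta(z_k^\delta\pm)$ in each interface jump and applying the triangle inequality bounds the second sum by $TV(u_M^\delta)+\sum_k E_k(t)$ with
\[
E_k(t):=|u^\delta(z_k^\delta-,t)-u_M^\delta(z_k^\delta-)|+|u^\delta(z_k^\delta+,t)-u_M^\delta(z_k^\delta+)|,
\]
which already accounts for the $TV(u_M)$ term on the right-hand side of the claim.

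The central ingredient is an \emph{interface transfer estimate}. From the Rankine--Hugoniot identity $A^\delta(z_k-,u^\delta(z_k-,t))=A^\delta(z_k+,u^\delta(z_k+,t))$ combined with the uniform convexity bound
\[
\tfrac{C_1}{2}(u-u_M(x))^2\leq A(x,u)\leq \tfrac{C_2}{2}(u-u_M(x))^2,
\]
one deduces $|u^\delta(z_k\pm,t)-u_M^\delta(z_k\pm)|\leq \sqrt{C_2/C_1}\,|u^\delta(z_k\mp,t)-u_M^\delta(z_k\mp)|$, so the two halves of $E_k(t)$ are equivalent up to the factor $\sqrt{C_2/C_1}$ and, more importantly, any front crossing an interface changes amplitude in the $u$-variable by at most $\sqrt{C_2/C_1}$.

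Inside each cell $(z_k^\delta,z_{k+1}^\delta)$ the flux $u\mapsto A^\delta(z_k+,u)$ is spatially constant and strictly convex, so by Remark~\ref{remark:scalar} interior interactions cannot increase the cellwise total variation. Hence $TV_{(z_k,z_{k+1})}(u^\delta(\cdot,t))$ is bounded by $TV_{(z_k,z_{k+1})}(u_0^\delta)$ plus the aggregate strength of waves that have been injected into the cell through the surrounding interfaces. By inspecting the Riemann solutions catalogued in Section~\ref{sec:Riemann} and applying the transfer estimate, each wave crossing an interface into a cell has strength at most $\sqrt{C_2/C_1}$ times that of the incoming wave. Summing over $k$, combining with the interface contribution, and recognising that $\sum_k TV_{(z_k,z_{k+1})}(u_0^\delta)+\sum_k E_k(0)$ is precisely $\Lambda(u_0^\delta)$, yields $TV(u^\delta(\cdot,t))\leq TV(u_M)+C_3\Lambda(u_0^\delta)$ with $C_3=(C_2/C_1)^{3/2}$, where the three compounded factors of $\sqrt{C_2/C_1}$ arise from (i) the transfer estimate on $E_k$, (ii) the bound on wave amplitude upon traversing an interface, and (iii) the conversion between $u$-amplitude and flux-amplitude needed to close the accounting. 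Finally, $\delta\to 0$ together with lower semicontinuity of TV gives the stated estimate.

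\textbf{Main obstacle.} The delicate point is the bookkeeping in the cellwise wave-injection step: one must show that the cumulative strength of waves entering any fixed cell over the full time horizon is controlled by $\Lambda(u_0^\delta)$ and does not blow up through repeated reflections between neighbouring interfaces. The key observation, to be made quantitative, is that once a wave is transferred into a cell its characteristic speed has a definite sign determined by its position relative to $u_M$, so it either exits through the opposite interface (incurring another factor $\sqrt{C_2/C_1}$) or is absorbed by a shock inside the cell, producing a geometric cap on the total contribution; this, together with tracking that rarefactions generated at interfaces (cf.\ Lemma~\ref{lemma:interface}) also respect the transfer estimate, is what closes the argument and pins down the $(C_2/C_1)^{3/2}$ constant.
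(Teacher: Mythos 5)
There is a genuine gap, and it is precisely the one you flag in your ``Main obstacle'' paragraph: your argument is a \emph{forward} wave-interaction bookkeeping in which every interface crossing multiplies a wave's amplitude by a factor $\sqrt{C_2/C_1}\geq 1$. Since the flux is allowed to have arbitrarily many (indeed infinitely many, possibly accumulating) spatial discontinuities, a single characteristic may cross $n$ interfaces with $n$ unbounded, and your per-crossing amplification compounds to $(C_2/C_1)^{n/2}$. No ``geometric cap'' is available: the transfer factor exceeds $1$, so absorption by shocks or exit through the opposite interface does not rescue the estimate, and the heuristic of ``three compounded factors of $\sqrt{C_2/C_1}$'' producing exactly $(C_2/C_1)^{3/2}$ does not correspond to any actual mechanism in the dynamics. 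As written, the proposal does not close.

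The paper avoids this entirely by exploiting a conserved quantity rather than tracking amplifications. Because $\partial_{uu}A\geq C_1$ rules out affine parts, Lemma~\ref{lemma:contact-dis} and the interface analysis show that for $t>0$ no rarefactions are created at interfaces and no new states appear away from them; hence every value $u^\de(x,t_0)$ traces back to an initial value $u^\de_0(y(x))$ with the \emph{same flux value}, $A^\de(x,u^\de(x,t_0))=A^\de(y(x),u^\de_0(y(x)))$. One then compares $u^\de(z_1,t_0)$ and $u^\de(z_2,t_0)$ for $z_1,z_2$ in a common cell by a \emph{single} application of Lemma~\ref{lemma:uniformly-convex} (the comparison of inverses of two uniformly convex fluxes, which is where the exponent $3/2$ genuinely originates), splitting into the case where $y(z_1),y(z_2)$ lie in one cell (giving the term $|u_0(y(z_1))-u_0(y(z_2))|$) and the case where they lie in different cells (giving, after inserting $u_M$, the terms $|u_0(y(z_i))-u_M(y(z_i))|$). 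These two cases are exactly the two alternatives in the definition \eqref{def:vj} of $\Delta w_j$, which is why the hypothesis $\Lambda(u_0)<\infty$ — and not merely $u_0\in BV$ — is what the theorem needs. Your identification of $\sum_k TV_{(z_k,z_{k+1})}(u_0^\de)+\sum_k E_k(0)$ with $\Lambda(u_0^\de)$ is correct, but to make your strategy work you would have to replace the per-interface transfer estimate by this one-shot backward comparison; otherwise the constant cannot be made independent of the number of interfaces.
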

 Before the proof of Theorem \ref{theorem:bv}, here we prove a property of uniform convex flux in the following lemma. 
 \begin{lemma}\label{lemma:uniformly-convex}
 	Let $f,g\in C^2(I)$ such that $f^{\p\p},g^{\p\p}\in[C_1,C_2]$ for $C_2>C_1>0$.  Additionally, assume that $f(\theta_f)=g(\theta_g)=0$. Then we have
 	\begin{equation}
 	\abs{g_+^{-1}(a)-g_{+}^{-1}(b)}\leq C_3\abs{f_+^{-1}(a)-f_+^{-1}(b)}\mbox{ for }a,b\in I\mbox{ where }C_3=\left(\frac{C_2}{C_1}\right)^{\frac{3}{2}}.
 	\end{equation}
 \end{lemma}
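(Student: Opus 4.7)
The plan is to derive matching upper and lower pointwise bounds on the derivatives of $g_+^{-1}$ and $f_+^{-1}$ from the uniform convexity hypothesis, and then integrate to compare. The arithmetic should be completely elementary; the only substantive ingredient is that uniform convexity together with $g'(\theta_g)=0$ pins down both $g(u)$ and $g'(u)$ to within the multiplicative factor $C_2/C_1$ of their model quadratic.

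\medskip
\noindent\textbf{Step 1 (bracketing $g$ and $g'$).} Since $g'(\theta_g)=0$ and $C_1\leq g''\leq C_2$, one integration gives $C_1(u-\theta_g)\leq g'(u)\leq C_2(u-\theta_g)$ for $u\geq\theta_g$, and a second integration (using $g(\theta_g)=0$) gives
$$\frac{C_1}{2}(u-\theta_g)^2\leq g(u)\leq\frac{C_2}{2}(u-\theta_g)^2.$$
Consequently, if $s\geq 0$ and $u=g_+^{-1}(s)$, inverting the second inequality yields $\sqrt{2s/C_2}\leq u-\theta_g\leq\sqrt{2s/C_1}$, and substituting back into the first gives
$$C_1\sqrt{2s/C_2}\leq g'(g_+^{-1}(s))\leq C_2\sqrt{2s/C_1}.$$
The identical inequalities hold for $f$ in place of $g$ (with $\theta_f$ replacing $\theta_g$).

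\medskip
\noindent\textbf{Step 2 (pointwise ratio of derivatives).} Using $(g_+^{-1})'(s) = 1/g'(g_+^{-1}(s))$, the bounds in Step~1 convert to
$$\frac{\sqrt{C_1}}{C_2\sqrt{2s}}\;\leq\;(g_+^{-1})'(s)\;\leq\;\frac{\sqrt{C_2}}{C_1\sqrt{2s}},$$
and the same two-sided bound holds for $(f_+^{-1})'(s)$. Dividing the upper bound for $(g_+^{-1})'$ by the lower bound for $(f_+^{-1})'$ causes the singular $1/\sqrt{2s}$ factor to cancel, leaving the constant $C_2^{3/2}/C_1^{3/2}=C_3$. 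Thus, for almost every $s>0$ in the common range,
$$(g_+^{-1})'(s)\;\leq\;C_3\,(f_+^{-1})'(s).$$

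\medskip
\noindent\textbf{Step 3 (integration).} WLOG assume $a\leq b$. Since $g_+^{-1}$ and $f_+^{-1}$ are absolutely continuous (the derivatives are integrable near $s=0$, being $O(1/\sqrt{s})$), the fundamental theorem of calculus gives
$$|g_+^{-1}(b)-g_+^{-1}(a)|=\int_a^b(g_+^{-1})'(s)\,ds\leq C_3\int_a^b(f_+^{-1})'(s)\,ds=C_3|f_+^{-1}(b)-f_+^{-1}(a)|,$$
which is the stated inequality.

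\medskip
I do not anticipate a substantive obstacle: the only mildly delicate point is the integrability of the derivatives across $s=0$ when $a$ or $b$ equals $0$, but this is immediate from the bounds in Step~2. Everything else is a direct consequence of integrating $C_1\leq g''\leq C_2$ twice.
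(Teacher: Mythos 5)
Your proof is correct and follows essentially the same route as the paper's: both reduce the claim to the pointwise derivative comparison $(g_+^{-1})'(s)\le C_3\,(f_+^{-1})'(s)$ (equivalently $f'(f_+^{-1}(p))\le C_3\, g'(g_+^{-1}(p))$) and then integrate in $s$. The only cosmetic difference is that you bracket each of $g'\circ g_+^{-1}$ and $f'\circ f_+^{-1}$ against the model quadratic $s\mapsto\sqrt{2s/C}$ and take the ratio, whereas the paper compares $f_+^{-1}(p)-\theta_f$ with $g_+^{-1}(p)-\theta_g$ directly; both yield the same constant $\left(\frac{C_2}{C_1}\right)^{3/2}$.
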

 \begin{proof}
 	Suppose $f(u_1)=g(u_2)$ with $u_1>\theta_f,u_2>\theta_g$. Now, we observe that
 	\begin{align}
 	f(u_1)&=f(\theta_f)+f^{\p}(\theta_f)(u_1-\theta_f)+\frac{f^{\p\p}(u_*)}{2}\abs{u_1-\theta_f}^2\geq \frac{C_1}{2}\abs{u_1-\theta_f}^2,\\
 	g(u_2)&=g(\theta_g)+g^{\p}(\theta_g)(u_2-\theta_g)+\frac{g^{\p\p}(u^*)}{2}\abs{u_2-\theta_g}^2\leq \frac{C_2}{2}\abs{u_2-\theta_g}^2.
 	\end{align}
 	Subsequently, we have
 	\begin{equation}
 	\frac{C_1}{2}\abs{u_1-\theta_f}^2\leq \frac{C_2}{2}\abs{u_2-\theta_g}^2.
 	\end{equation}
 	Therefore, we have
 	\begin{equation}
 	(u_1-\theta_f)\leq \left(\frac{C_2}{C_1}\right)^{\frac{1}{2}}(u_2-\theta_g).
 	\end{equation}
 	Notice that
 	\begin{align*}
 	f^{\p}(f^{-1}_{+}(p))=f^{\p}(f^{-1}_{+}(p))-f^{\p}(\theta_f)&=(f^{-1}_{+}(p)-\theta_f)\int\limits_{0}^{1}f^{\p\p}(\la f^{-1}_{+}(p)+(1-\la)\theta_f)\,d\la\\
 	&\leq \left(\frac{C_2}{C_1}\right)^{\frac{3}{2}}(g^{-1}_{+}(p)-\theta_g)\int\limits_{0}^{1}g^{\p\p}(\la g^{-1}_{+}(p)+(1-\la)\theta_g)\,d\la\\
 	&=\left(\frac{C_2}{C_1}\right)^{\frac{3}{2}}g^{\p}(g^{-1}_{+}(p)).
 	\end{align*}
 	Hence, we have
 	\begin{align*}
 	g^{-1}_+(a)-g^{-1}_+(b)&=(a-b)\int\limits_{0}^{1}\frac{1}{g^{\p}(g^{-1}_+(\la a+(1-\la)b))}\,d\la\\
 	&\leq \left(\frac{C_2}{C_1}\right)^{\frac{3}{2}}(a-b)\int\limits_{0}^{1}\frac{1}{f^{\p}(f^{-1}_+(\la a+(1-\la)b))}\,d\la\\
 	&=\left(\frac{C_2}{C_1}\right)^{\frac{3}{2}}(f^{-1}_+(a)-f^{-1}_+(b)).
 	\end{align*}
 \end{proof}

 \begin{proof}[Proof of Theorem \ref{theorem:bv}]
 	We prove the estimate for approximated solution $u^\de$ and then, we show that estimates are uniform over $\de$. Let $u^\de$ be approximated solution to \eqref{eq:discont} obtained via wave front tracking. Note that there is no affine part on $A(\cdot,x)$ for all $x\in\R$ due to $\pa_{uu}A(u,x)\geq C_1$. By using Lemma \ref{lemma:contact-dis}, no new state arises away from interface for time $t>0$. Since there is no linear part in the flux, no rarefaction wave occurs at the interface for $t>0$. Therefore, for a time $t_0>0$ and $x\in\R$, there exists $y(x)\in\R$ such that $A^\de(u^\de(x,t_0),x)=A^\de(u_0^\de(y(x)),y(x))$. Suppose $-\f=x_{0}<x_{1}<\cdots<x_{m}<x_{m+1}=\f$ are discontinuity points of $x\mapsto A(\cdot,x)$. Let $z_1,z_2\in(x_i,x_{i+1})$ for some $0\leq i\leq m$. Now, there are two cases
 	\begin{enumerate}
 		\item $y(z_1),y(z_2)\in (x_j,x_{j+1})$ for some $0\leq j\leq m$. Then by Lemma \ref{lemma:uniformly-convex} we have 
 		\begin{equation}\label{estimate1}
 		\abs{u^\de(z_1,t_0)-u^\de(z_2,t_0)}\leq C_3 \abs{u^\de_0(y(z_1))-u^\de_0(y(z_2))}.
 		\end{equation}
 		\item $y(z_1)\in (x_j,x_{j+1}),y(z_2)\in(x_{k},x_{k+1})$ for some $0\leq j\neq k\leq m$. Then by Lemma \ref{lemma:uniformly-convex} and triangle inequality, we have 
 		\begin{align}
 		\abs{u^\de(z_1,t_0)-u^\de(z_2,t_0)}&\leq 	\abs{u^\de(z_1,t_0)-u_M^\de(z_1)}+\abs{u^\de(z_2,t_0)-u_M^\de(z_2)}+\abs{u_M^\de(z_1)-u_M^\de(z_2)}\nonumber\\
 		&\leq C_3\left[\abs{u^\de_0(y(z_1))-u_M^\de(y(z_1))}+\abs{u^\de_0(y(z_2))-u^\de_M(y(z_2))}\right]\nonumber\\
 		&+\abs{u_M^\de(z_1)-u_M^\de(z_2)}.\label{estimate2}
 		\end{align}
 	\end{enumerate}
 	Suppose $z\in(x_{i},x_{i+1})$ for $0\leq i\leq m-1$, then by Lemma \ref{lemma:uniformly-convex}, we have
 	\begin{equation}\label{estimate3}
 	\abs{u^\de(z,t_0)-u^\de_M(z)}\leq C_3 \abs{u^\de_0(y(z))-u^\de_M(y(z))}.
 	\end{equation}
 	Combining \eqref{estimate1}, \eqref{estimate2} and \eqref{estimate3} with definition \eqref{def:La} of $\Lambda$ we conclude Theorem \ref{theorem:bv}.
 \end{proof}
 
 \subsection{Counterexample-I}\label{sec:counter-ex-1}
 In this section, we wish to construct an example of adapted entropy solution to a flux satisfying \ref{A1}--\ref{A3} and \ref{A4}--\ref{A5} such that $u(\cdot,1)\notin BV_{loc}(\R)$ for a data $u_0\in BV(\R)$. Here we consider a flux which is additionally satisfying $C^2$ regularity, that is, for a.e. $x\in\R$, $u\mapsto A(x,u)$ is a $C^2$-function. Note that this result is special for fluxes having infinitely many spatial discontinuities since for a flux $A$ with finitely many discontinuity and satisfying \ref{A1}--\ref{A3} and \ref{A4}--\ref{A5}, $TV(u(\cdot,t))<\f$ for BV initial data (see \cite{Ghoshal-JDE}). 
 \begin{proposition}
 	There exists a flux $A$ satisfying \eqref{A1}--\eqref{A3}, \eqref{A4}--\eqref{A5} and an initial data $u_0$ such that $\La(u_0)<\f$ such that the corresponding entropy solution to \eqref{eq:discont} has TV blow up at some finite time $T_0>0$.
 \end{proposition}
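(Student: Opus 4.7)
The plan is to exhibit explicitly a pair $(A,u_0)$ — where $A$ has infinitely many spatial discontinuities accumulating at a single point, and $u_0$ satisfies $\Lambda(u_0)<\infty$ — for which the unique adapted entropy solution (guaranteed by Theorem \ref{MT}) develops infinite total variation at some finite time $T_0>0$. The example must fail at least one hypothesis of Theorem \ref{theorem:bv}; since we want $C^2$ regularity preserved, the natural choice is to drop uniform convexity. That is, each flux will be $C^2$ and convex with a single minimum, but the second derivatives at the minimum will degenerate along the accumulation sequence. This is precisely the ingredient that breaks Lemma \ref{lemma:uniformly-convex}: without a uniform lower bound on $f''$, a fixed flux level $\alpha$ corresponds to $u$-amplitudes of order $\sqrt{\alpha/f''(u_M)}$, which can be arbitrarily large.

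First, I would fix a decreasing sequence of interface locations $x_n\downarrow x^*$ (for concreteness $x_n=2^{-n}$, $x^*=0$) and on each strip $(x_{n+1},x_n)$ put $A(x,u)=f_n(u)$, where $\{f_n\}$ is a family of $C^2$ convex fluxes sharing a common minimum location $u_M=0$ and common minimum value $0$, but with $f_n''(0)\downarrow 0$ at a rate to be tuned. The $f_n$ would be taken to agree outside a small neighborhood of $u_M$, with summable $L^\infty$-differences $|f_n-f_{n+1}|_\infty$, so that \ref{A4} holds with a BV function $a(x)$ recording the interface jumps; \ref{A1}--\ref{A3} and \ref{B2} are immediate from this construction, and \ref{A5} is trivial because $u_M^\pm\equiv 0$.

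Second, I would choose $u_0$ as a single small perturbation of $u_M$ supported away from the accumulation point, for instance $u_0=h\,\chi_{(1,2)}$ with $h>0$ small. Since $u_0=u_M$ on the entire region $x<1$, and within $(1,2)$ only a single flux $f_0$ is active, a direct computation from the definition \eqref{def:La} gives $\Lambda(u_0)\le 2h<\infty$. The Riemann problems at the endpoints of $(1,2)$ produce a leftward-moving wave in $f_0$; by the spacing of the $x_n$ and a Courant-type estimate on the wave speed, this wave successively strikes the interfaces $x_0,x_1,x_2,\dots$ at times $t_0<t_1<t_2<\cdots$ which accumulate at some finite $T_0$. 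Using the Riemann solutions classified in Section \ref{sec:Riemann}, each crossing of the interface $x_n$ matches flux values across the interface, so that the transmitted state $u_n^-$ on the $f_n$-side satisfies $f_n(u_n^-)=f_{n-1}(u_{n-1}^-)\equiv\alpha$, whence $|u_n^-|\asymp\sqrt{\alpha/f_n''(0)}$. The transmitted wave leaves a persistent contact/rarefaction front on the right side of $x_n$ whose amplitude is comparable to $|u_n^--u_{n-1}^-|$. By tuning $f_n''(0)$ to decay slowly enough that these residual amplitudes remain bounded below by some $\delta_n$ with $\sum\delta_n=\infty$, and by verifying (via Lemma \ref{io} applied at each interface) that the adapted entropy inequality \eqref{ineq:adapted} is satisfied throughout, one obtains $\mathrm{TV}(u(\cdot,t))\to\infty$ as $t\uparrow T_0$.

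The main obstacle is the wave-interaction bookkeeping at the accumulation point: the individual Riemann problems are routine, but one must simultaneously tune the spacings $x_n-x_{n+1}$, the decay rate of $f_n''(0)$, and the height $h$ so that (i) all the hypotheses \ref{A1}--\ref{A5} genuinely hold, in particular the BV requirement in \ref{A4}, (ii) the residual jumps at the interfaces persist up to time $T_0$ rather than being cancelled by subsequent interactions or absorbed into contact discontinuities, and (iii) the sum of residual amplitudes diverges at a common finite time. A further delicate point is to keep $\Lambda(u_0)$ finite while still producing non-negligible jump transmission at each $x_n$; this is possible because $u_0$ is supported strictly away from the accumulation region, so the growth in $|u_n^-|$ as $n\to\infty$ arises entirely from the nonlinear Riemann dynamics rather than from the initial profile itself.
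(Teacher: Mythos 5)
There is a genuine gap, and it lies in the core mechanism rather than in the bookkeeping you flag as the ``main obstacle.'' Your architecture is a single wave of fixed amplitude cascading through infinitely many interfaces accumulating at a point, leaving at each interface $x_n$ a residual stationary jump $|u_n^--u_{n-1}^-|$. But the flux value carried by the front is conserved at every crossing (the transmitted state solves $f_n(u_n^-)=f_{n-1}(u_{n-1}^-)=\alpha$ with $\alpha=f_0(h)>0$ fixed, as in Case~2(b) of the Riemann catalogue in Section~\ref{sec:Riemann}), and hypothesis \eqref{A4} with $a\in BV(\R)$ forces $\sum_n\sup_{|u|\le M}|f_n(u)-f_{n-1}(u)|\le \eta(M)\,TV(a)<\infty$. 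Since each $f_n$ is convex with $f_n(u_M)=0$, the chord from $(u_n^-,\alpha)$ to $(u_M,0)$ shows $|f_n'|\ge \alpha/r$ at the level-$\alpha$ point, where $r=\sup_n|u_n^--u_M|$ is finite by the coercivity in \eqref{A2}; hence the map $f\mapsto f_-^{-1}(\alpha)$ is Lipschitz on this family with constant of order $r/\alpha$, and $\sum_n|u_n^--u_{n-1}^-|\lesssim (r/\alpha)\sum_n\|f_n-f_{n-1}\|_{L^\infty}<\infty$. The tuning you ask for in item~(iii) --- residual amplitudes $\delta_n$ with $\sum\delta_n=\infty$ --- is therefore impossible: letting $f_n''(0)\downarrow 0$ only destroys the Lipschitz character of the inverse near the level $\alpha=0$, which a fixed-amplitude wave never approaches. (Two smaller issues: with $h>0$ and a common minimum at $u=0$ your bump propagates to the right, away from the accumulation point; and if you instead try to realize $|u_n^-|\asymp\sqrt{\alpha/f_n''(0)}\to\infty$ you leave $L^\infty$ and violate \eqref{A2}.)

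The paper's construction avoids exactly this obstruction by abandoning the cascade. It uses infinitely many mutually disjoint bumps, the $n$-th of height $a_n=n^{-1-\delta}\to 0$ and contained in a single flux cell, each of which crosses only \emph{one} interface, passing from $f_n(u)=2n^{-3/2}u^2+u^4$ into $g_n(u)=u^4$. Because the amplitudes, hence the flux levels $f_n(a_n)$, tend to zero, each wave sits in the degenerate region of the flux where $(g_n)_+^{-1}(\beta)=\beta^{1/4}$ has unbounded derivative at $\beta=0$; the one-interface amplification $b_n=(f_n(a_n))^{1/4}\ge n^{-7/8-\delta/2}$ then gives $\sum a_n<\infty$ (so $\Lambda(u_0)<\infty$, since each bump vanishes at the interfaces) while $\sum b_n=\infty$ for $\delta<1/4$, and the disjointness of the supports makes the divergence of $TV(u(\cdot,1))$ immediate. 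In short, the blow-up must be driven by vanishing-amplitude waves meeting the degenerate part of the flux, which forces the independent-bumps design; a single conserved-level wave cannot do it.
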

 
 \noindent\textbf{Construction:} Let us consider two sequences $\{y_n\}_{n\geq0}$ and $\{z_n\}_{n\geq 1}$ such that they satisfy
 \begin{equation}\label{def:seq}
 y_{n-1}<z_n<y_n\mbox{ for }n\geq1\mbox{ and } \lim\limits_{n\rr\f}y_n=\lim\limits_{n\rr\f}z_n=z^*\mbox{ for some }z^*\in\R.
 \end{equation}
 Now we consider the following flux,
 \begin{equation}\label{def:F}
 A(x,u):=\sum\limits_{n=1}^{\f}f_n(u)\chi_{(y_{n-1},z_n]}(x)+\sum\limits_{n=1}^{\f}g_n(u)\chi_{(z_n,y_n]}(x)+f_1\chi_{(-\f,y_0]}(x)+u^2\chi_{(z^*,\f]}(x)
 \end{equation}
 where $f_n,g_n$ are defined as follows
 \begin{equation}
 f_n(u)=\frac{2}{n^{3/2}}u^2+u^4\mbox{ and }g_n(u)=u^4.
 \end{equation}
 Note that $u\mapsto A(x,u)$ is a $C^2$-function for a.e. $x\in\R$. Hence \ref{A1} is satisfied. Observe that $A(x,u)$ satisfies \ref{A2}, \ref{A3} and \ref{A5} with $B_1(u)=u^4,B_2(u)=2u^4$ and $u_M\equiv0$. Note that \ref{A4} is verified for $A(x,u)$ with the following choice of $\eta,a$
 \begin{equation}
 \eta(u):=2u^2\mbox{ and }a(x):=\sum\limits_{n=1}^{\f}\frac{1}{n^{\frac{3}{2}}}\chi_{(y_{n-1},z_n]}(x)+1\chi_{(-\f,y_0]}(x).
 \end{equation}
 Loosely speaking, $\abs{(g_n)_+^{-1}(f_n(u))}\geq u^{\frac{1}{2}}n^{-\frac{3}{8}}$ gives a possible blow up in the solution.
 
 We consider an initial data defined as follows,
 \begin{equation}
 u_0^n(x):=\left\{\begin{array}{rl}
 0&\mbox{ if }y_{n-1}<x<x_n^1,\\
 a_n&\mbox{ if }x_n^1<x<x_n^2,\\
 0&\mbox{ if }x_n^2<x<z_n,\\
 0&\mbox{ if }z_n<x<y_n.
 \end{array}\right.
 \end{equation}
 We observe that rarefaction wave arises at $x_n^1$ and shock wave is generated at $x_n^2$ with shock speed $\la_n$ where $\la_n$ is calculated as follows
 \begin{equation}\label{def:la}
 \la_n=\frac{f_n(a_n)-f_n(0)}{a_n-0}=\frac{2a_n}{n^{\frac{3}{2}}}+a_n^3.
 \end{equation}
 Let at time $t=t_n$ the shock curve hits the interface, therefore, we have $x_n^2+t_n\la_n=z_n$. Since the speed of extreme right characteristic of rarefaction wave $f^{\p}_n(a_n)$ is bigger than $a_n$, we need to give sufficient gap between $x_n^1$ and $x_n^2$ such that before time $t=1$ it does not hit the shock wave and the interface at $z_n$. Hence, it must hold 
 \begin{equation}\label{example:condition1}
 z_n-x_n^1\geq f^{\p}_n(a_n).
 \end{equation}
 Now after the shock wave from $x_n^2$ hits the interface at $z_n$ it generates another shock wave from $(z_n,t_n)\in\R\times\R_+$. Note that this new shock wave has states $b_n,0$ on its left and right respectively where $b_n=(g_n)_+^{-1}(f_n(a_n))$. We want this shock wave not to meet interface at $y_n$ before time $t=1$, hence it must hold, 
 \begin{equation}\label{example:condition2}
 \xi_n(1-t_n)+z_n<y_n,
 \end{equation} where $\xi_n$ is the shock speed between $b_n,0$, that is,
 \begin{equation}\label{def:xi}
 \xi_n=\frac{g_n(b_n)-g_n(0)}{b_n-0}=b_n^3.
 \end{equation}
 If we are able to choose such $x_n^1,x_n^2,z_n,y_n$ then the solution has the following structure up to time $t=1$, (see figure \ref{figure:example})

 \begin{enumerate}
 	
 	\item For $t\in[0,t_n]$ we have
 	\begin{equation}
 	u^n(x,t):=\left\{\begin{array}{rl}
 	0&\mbox{ if }x<x_n^1,\\
 	(f_n^{\p})^{-1}(x-x_{n}^1/t)&\mbox{ if }x_n^1<x<x_n^1+f_n^{\p}(a_n)t,\\
 	a_n&\mbox{ if }x_n^1+f_n^{\p}(a_n)t<x<x_n^2+\la_nt,\\
 	0&\mbox{ if }x_n^2+\la_nt<x<z_n,\\
 	0&\mbox{ if }z_n<x<y_n,
 	\end{array}\right.
 	\end{equation}
 	where $\la_n$ is defined as in \eqref{def:la}.
 	
 	\item For $t\in[t_n,1]$ we have
 	
 	\begin{equation}
 	u^n(x,t):=\left\{\begin{array}{rl}
 	0&\mbox{ if }x<x_n^1,\\
 	(f_n^{\p})^{-1}(x-x_{n}^1/t)&\mbox{ if }x_n^1<x<x_n^2+f_n^{\p}(a_n)t,\\
 	a_n&\mbox{ if }x_n^2+f_n^{\p}(a_n)t<x<z_n,\\
 	b_n&\mbox{ if }z_n<x<z_n+\xi_n(t-t_n),\\
 	0&\mbox{ if }z_n+\xi_n(t-t_n)<x<y_n
 	\end{array}\right.
 	\end{equation}
 	where $\xi_n $ is defined as in \eqref{def:xi} and $b_n$ is defined as follows,
 	\begin{equation}
 	b_n=(g_n)_+^{-1}(f_n(a_n)).
 	\end{equation}	
 \end{enumerate}
 Finally, we choose 
 \begin{equation}
 a_n=\frac{1}{n^{1+\de}},\, x_n^2-x_n^1=\frac{8}{n^{\de+\frac{5}{2}}}+\frac{8}{n^{3+3\de}}\mbox{ and }z_n-x_n^2=\left(\frac{4}{n^{\de+\frac{5}{2}}}+\frac{2}{n^{3+3\de}}\right)\left(1-\frac{1}{n^2}\right),
 \end{equation}
 where $\de>0$.
 Next we choose
 \begin{equation}
 y_n-z_n=2\left(\frac{2}{n^{2\de+\frac{7}{2}}}+\frac{1}{n^{4+4\de}}\right)^{\frac{3}{4}}\leq 6\left(\frac{1}{n^{2\de+\frac{7}{2}}}\right)^{\frac{3}{4}}\mbox{ and }x_n^1=y_{n-1}+\frac{1}{n^2}.
 \end{equation}
 Note that
 \begin{equation}
 y_{n-1}<x_n^1<x_n^2<z_n<y_n.
 \end{equation}
 Note that $x_n^2-x_n^1=f^{\p}_n(a_n)$. Hence, we have $z_n-x_n^1=z_n-x_n^2+x_n^2-x_n^1>f^{\p}(a_n)$ since $z_n-x_n^2>0$. Therefore, \eqref{example:condition1} is satisfied. Observe that
 $y_n-z_n=2((g_n)_+^{-1}(f_n(a_n)))^{3}=2\xi_n>\xi_n(1-t_n)$ since $\xi_n>0$ and $0<1-t_n<1$. Hence, the condition \eqref{example:condition2} is satisfied. Now we note that
 \begin{align}
 y_n-y_{n-1}&=y_n-z_n+z_n-x_n^2+x_n^2-x_n^1+x_n^1-y_{n-1}\\
 &\leq 6\left(\frac{1}{n^{2\de+\frac{7}{2}}}\right)^{\frac{3}{4}}+\left(\frac{4}{n^{\de+\frac{5}{2}}}+\frac{2}{n^{3+3\de}}\right)\left(1-\frac{1}{n^2}\right)+\frac{8}{n^{\de+\frac{5}{2}}}+\frac{8}{n^{3+3\de}}+\frac{1}{n^2}\\
 &\leq \frac{1}{n^2}+\frac{22}{n^{\de+\frac{5}{2}}}+\frac{8}{n^{\frac{21}{8}}}.
 \end{align}
 Thus, there exists an $y_{\f}$ such that $y_n\rr y_{\f}$. Set $z^*=y_{\f}$ and $y_0=0$ in \eqref{def:F}.
 
 Note that $TV(u_0^n)\leq n^{-1-\de}$ and $TV(u^n(\cdot,1))\geq \abs{b_n}$. 
 Observe that
 \begin{align}
 \abs{b_n}=\left[\frac{2}{n^{2\de+\frac{7}{2}}}+\frac{1}{n^{4+4\de}}\right]^{\frac{1}{4}}\geq \frac{1}{n^{\frac{7}{8}+\frac{\de}{2}}}.
 \end{align}

 Note if we take an initial data $u_0=\sum\limits_{k=1}^{\f}u_0^k$ then, $u=\sum\limits_{k=1}^{\f}u_k$ will be adapted entropy solution to \eqref{eq:discont}. Since $TV(u_0^k)\leq k^{-1-\de}$ and $supp(u_0^k),k\geq1$ are mutually disjoint we have $TV(u_0)<\f$. We observe that $supp(u^k(\cdot,1)),k\geq1$ are mutually disjoint. Therefore, we have,
 \begin{equation}
 TV(u(\cdot,1))\geq \sum\limits_{k\geq1}\frac{1}{k^{\frac{7}{8}+\frac{\de}{2}}}.
 \end{equation}
 Now we choose, $\de\in(0,4^{-1})$. Then, we have $\frac{7}{8}+\frac{\de}{2}<1$. Hence, $TV(u(\cdot,1))=\f$.
 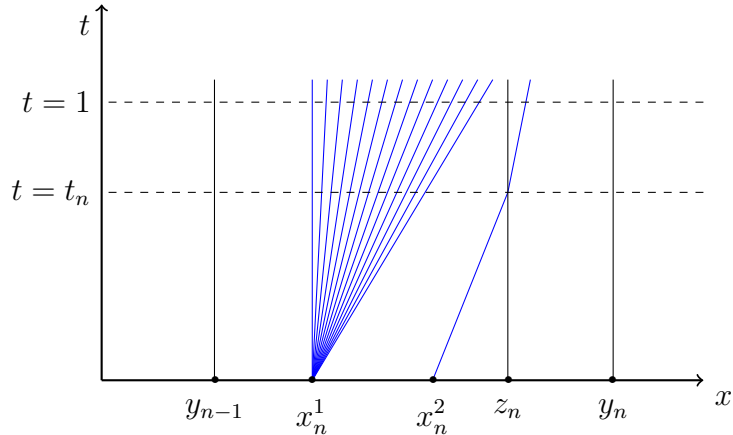
\begin{figure}[ht]
 	\centering
 	\begin{tikzpicture}

 	\draw[thick,->] (-9 ,0) -- (-1,0) node[anchor=north west] {$x$};
 	\draw[thick,->] (-9 ,0) -- (-9,5) node[anchor=north east] {$t$};
 	\draw[color=black] (-7.5 ,0) -- (-7.5,4) ;
 	\draw[color=blue] (-6.2 ,0) -- (-6.2,4) ;
 	\draw[color=blue] (-6.2 ,0) -- (-6,4) ;
 	\draw[color=blue] (-6.2 ,0) -- (-5.8,4) ;
 	\draw[color=blue] (-6.2 ,0) -- (-5.6,4) ;
 	\draw[color=blue] (-6.2 ,0) -- (-5.4,4) ;
 	\draw[color=blue] (-6.2 ,0) -- (-5.2,4) ;
 	\draw[color=blue] (-6.2 ,0) -- (-5,4) ;
 	\draw[color=blue] (-6.2 ,0) -- (-4.8,4) ;
 	\draw[color=blue] (-6.2 ,0) -- (-4.6,4) ;
 	\draw[color=blue] (-6.2 ,0) -- (-4.4,4) ;
 	\draw[color=blue] (-6.2 ,0) -- (-4.2,4) ;
 	\draw[color=blue] (-6.2 ,0) -- (-4,4) ;
 	\draw[color=blue] (-6.2 ,0) -- (-3.8,4) ;
 	\draw[color=blue] (-4.6 ,0) -- (-3.6,2.5) ;
 	\draw[color=blue] (-3.6 ,2.5) -- (-3.3,4) ;
 	\draw[color=black] (-3.6 ,0) -- (-3.6,4) ;
 	\draw[color=black] (-2.2 ,0) -- (-2.2,4) ;
 	
 	\draw[thick][] (-5.21,-0.18) node[anchor=south west, transform canvas={scale=1.5}]{$\textbf{.} $};			
 	
 	\draw[thick][] (-4.35,-0.18) node[anchor=south west, transform canvas={scale=1.5}]{$\textbf{.} $};
 	\draw[thick][] (-3.28,-0.18) node[anchor=south west, transform canvas={scale=1.5}]{$\textbf{.} $};
 	\draw[thick][] (-2.61,-0.18) node[anchor=south west, transform canvas={scale=1.5}]{$\textbf{.} $};
 	\draw[thick][] (-1.69,-0.18) node[anchor=south west, transform canvas={scale=1.5}]{$\textbf{.} $};
 	\draw[thick][] (-6.2,-.1) node[anchor=north] {$x^1_n$};
 	\draw[thick][] (-4.6,-.1) node[anchor=north] {$x^2_{n}$};
 	\draw[thick][] (-3.6,-.1) node[anchor=north] {$z_n$};
 	\draw[thick][] (-7.5,-.1) node[anchor=north] {$y_{n-1}$};
 	\draw[thick][] (-2.2,-.1) node[anchor=north] {$y_n$};
 	\draw[dashed] (-1,2.5) -- (-9,2.5) node[anchor= east] {$t=t_n$};
 	\draw[dashed] (-1,3.7) -- (-9,3.7) node[anchor= east] {$t=1$};
 	\end{tikzpicture}
 	\caption{This illustrates the structure of the solution in $[y_{n-1},y_n]\times[0,1]$. This is the building block for the solution giving TV blow up at time $t=1$ for strictly convex flux. Here $x=y_{n-1}$, $x=z_n$, $x=y_{n}$  represent interfaces. Note that rarefaction and shock are generated at $(x^1_n,0)$ and $(x^2_n,0)$ respectively. The shock line hits the interface $x=z_n$ at time $t=t_n<1$.}\label{figure:example}
 \end{figure}

\subsection{Counterexample-II}\label{sec:counter-ex-2}
In section \ref{sec:counter-ex-1} we show a BV blow-up of entropy solutions with the quantity $\La(u_0)$ is finite for initial data $u_0$ where the fluxes $u\mapsto A(x,u)$ is not necessarily uniformly convex. In this section we construct an example coming from BV data and uniformly convex flux such that it has a TV blow-up at some finite time $T$. In this situation we construct a flux and data for which the quantity $\La(u_0)$ is not finite. 

 \begin{proposition}
	There exists a flux $A$ satisfying \eqref{A1}--\eqref{A3}, \eqref{A4}--\eqref{A5} and an initial data $u_0$ such that $\pa_{uu}A(x,u)\in[C_1,C_2]$ for some $C_2>C_1>0$ and the corresponding entropy solution to \eqref{eq:discont} has TV blow up at some finite time $T_0>0$.
\end{proposition}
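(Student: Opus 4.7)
The plan is to construct a uniformly convex flux $A$ with $\pa_{uu}A(x,u)\in[C_1,C_2]$ and a BV initial datum $u_0$ for which the adapted entropy solution satisfies $TV(u(\cdot,T_0))=+\infty$ at some finite $T_0>0$. By \Cref{theorem:bv}, such an example must \emph{necessarily} have $\Lambda(u_0)=\infty$ while $TV(u_0)<\infty$, since otherwise that theorem would provide the bound $TV(u(\cdot,t))\le TV(u_M)+C_3\Lambda(u_0)$. So the first design constraint is to exhibit $u_0\in BV(\R)$ with $\Lambda(u_0)=\infty$, which is achieved by choosing a piecewise structure in which $|u_0-u_M|$ is of unit order across infinitely many interfaces while the oscillation between consecutive interfaces remains summable.

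The overall template will be the one of \Cref{sec:counter-ex-1}: an accumulating sequence of interfaces $y_n\to z^{*}$, uniformly convex fluxes of the simple form $A(x,u)=c_n(u-u_M(x))^{2}$ on $(y_{n-1},y_n)$ with $c_n\in[C_1/2,C_2/2]$ and $u_M\in BV(\R)$ piecewise constant, and in each slab a bump of amplitude $a_n$ designed so that the resulting shock hits the adjacent interface $y_n$ strictly before the common time $T_0$. The essential new difficulty is that \Cref{lemma:uniformly-convex} rules out the single-interface amplitude amplification that powered \Cref{sec:counter-ex-1} (the $u^{2}$-versus-$u^{4}$ trick); as stressed in the introduction, this forces a \emph{genuinely new} mechanism. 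The idea is to produce the divergence \emph{cumulatively}, routing each bump-generated shock through many successive interfaces before $T_0$, and exploiting the geometric growth factors $\sqrt{c_n/c_{n+1}}$ at each crossing together with the $u_M$-jumps $\Delta_n:=u_M(y_n+)-u_M(y_n-)$ so that every crossing deposits a persistent front whose amplitude, when summed over $n$, diverges.

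Concretely, four sequences need to be tuned together: the interface locations $\{y_n\}$ (accumulating fast enough that all relevant crossings occur before $T_0$), the coefficients $\{c_n\}$ and jumps $\{\Delta_n\}$ (with $\sum_n|\Delta_n|<\infty$ so that $u_M\in BV$), the bump amplitudes $\{a_n\}$ (with $\sum_n a_n<\infty$ so that $TV(u_0)<\infty$), and the bump supports $(x_n^1,x_n^2)$ inside each slab (so that no rarefaction catches its shock before $T_0$, guaranteeing that each bump continues to contribute its full rarefaction variation at $T_0$). The proposed piecewise profile is then verified to be the unique adapted entropy solution by applying the Riemann case analysis of \Cref{sec:Riemann} and \Cref{lemma:adapted-discrete} locally at each interface and invoking the uniqueness statement of \Cref{theorem1}.

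The main obstacle will be showing that the front contributions at time $T_0$ actually add up to an infinite sum rather than collapsing to $O(\sum_n a_n)$. Since the bounded range $c_n\in[C_1/2,C_2/2]$ makes the naive telescoping product $\prod_n\sqrt{c_n/c_{n+1}}$ converge, the cumulative amplification must be set up through a less transparent arrangement—probably involving bumps whose shocks revisit several interfaces, or an alternating $u_M$-pattern engineered so that each crossing adds a controlled positive contribution $b_n$ with $\sum_n b_n=+\infty$ while $\|u\|_{L^{\infty}}$ remains uniformly bounded in view of \Cref{A2}. Once the geometry and parameters are fixed with this cumulative amplification in place, the BV blow-up follows by direct summation of jump sizes in the snapshot profile at $t=T_0$.
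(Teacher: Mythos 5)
Your proposal correctly isolates the two structural constraints (one needs $TV(u_0)<\f$ with $\La(u_0)=\f$, and Lemma \ref{lemma:uniformly-convex} forbids the single-interface amplification of section \ref{sec:counter-ex-1}), but it stops exactly where the construction has to begin, and the ansatz you put forward cannot be completed. With fluxes of the pure quadratic form $c_n(u-u_M(x))^2$, the trace relation $c_{n+1}(u_R-u_M^{new})^2=c_n(u_L-u_M^{old})^2$ makes the jump deposited at the $n$-th interface equal to $\abs{(u_M^{new}-u_M^{old})+(\sqrt{c_n/c_{n+1}}-1)(u_L-u_M^{old})}$. Condition \eqref{A5} bounds $\sum_n\abs{u_M^{new}-u_M^{old}}$ by $TV(u_M)$, and condition \eqref{A4} forces $\sum_n\abs{c_n-c_{n+1}}<\f$ (the spatial oscillation of the flux must be dominated by a BV function $a$), hence $\sum_n\abs{\sqrt{c_n/c_{n+1}}-1}<\f$ since the $c_n$ are bounded away from zero. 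Together with the uniform $L^\f$ bound this makes the total interface-deposited variation finite, while the within-slab oscillations are amplified only by the telescoping factor $\prod_j\sqrt{c_j/c_{j+1}}\le\sqrt{C_2/C_1}$ that you yourself observe is bounded. So \emph{no} tuning of $\{c_n\}$, $\{\Delta_n\}$, $\{a_n\}$, bump supports, or routing of shocks through several interfaces can produce blow-up within your framework: both escape routes you name (``shocks revisiting several interfaces'', ``alternating $u_M$-pattern'') are blocked by exactly the two summability constraints imposed by \eqref{A4}--\eqref{A5}.

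The paper's construction rests on a mechanism your sketch does not contain. The data is a single Riemann jump, $u_0=\chi_{\{x>0\}}$ (so $TV(u_0)=1$ while $\La(u_0)=\f$ automatically); the interface strips $[a_n,b_n]$ accumulate at the location $x=0$ of that jump; $u_M\equiv 0$; and on the $n$-th strip the flux is not a rescaled parabola but the $C^2$ uniformly convex perturbation $f_n(u)=u^2+n^{-1/4}u^2(1-n^{4/3}u^2)^3$ of $g(u)=u^2$, supported in $\abs{u}\le n^{-2/3}$. The single rarefaction fan from $x=0$ carries the value $u\sim n^{-2/3}$ across the $n$-th strip, and the flux-matching condition $f_n(v)=g(w)$ shifts that value by roughly $n^{-1/4}\cdot n^{-2/3}=n^{-11/12}$, which is not summable, whereas the corresponding perturbation of the flux itself is of order $n^{-1/4}\cdot n^{-4/3}$, which is summable and is what keeps the spatial-regularity hypotheses intact. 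This decoupling --- a \emph{relative} flux perturbation that is large precisely at the $u$-values the rarefaction transports through strip $n$, yet small in the norm that \eqref{A4} controls --- is the decisive idea, and it has no counterpart in the quadratic ansatz. As written, the proposal is a plan whose central step is missing and whose stated starting point provably cannot reach the conclusion.
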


\noindent\textbf{Construction:} We split the construction into several steps.

\begin{description}
\descitem{Step-1}{step-A}\textit{Construction of flux and initial data:} In this very first step we construct a flux function $A$ satisfying conditions \eqref{A1}--\eqref{A3} and \eqref{A4}--\eqref{A5}. Note that due to the condition \eqref{A5}, we need to construct a flux function which is BV in spatial variable that is why the choice of flux becomes tricky. Then we choose a suitable initial data.

To construct the flux $A(x,u)$, we define functions $g,f_n$ as follows,
\begin{equation*}
f_n(u):=\left\{\begin{array}{ll}
u^2&\mbox{ if }u\leq -\frac{1}{n^{2/3}},\\
u^2+\frac{u^2}{n^{1/4}}-3n^{13/12}u^4+3n^{29/12}u^6-n^{15/4}u^8&\mbox{ if }-\frac{1}{n^{2/3}}\leq u\leq \frac{1}{n^{2/3}},\\
u^2&\mbox{ if }u\geq \frac{1}{n^{2/3}},
\end{array}\right.
\end{equation*}
and $g(u)=u^2$. After a calculation we get
\begin{equation*}
f_n^{\p}(u)=\left\{\begin{array}{ll}
2u&\mbox{ if }u\leq -\frac{1}{n^{2/3}},\\
2u+\frac{2u}{n^{1/4}}-12n^{13/12}u^3+18n^{29/12}u^5-8n^{15/4}u^7&\mbox{ if }-\frac{1}{n^{2/3}}\leq u\leq \frac{1}{n^{2/3}},\\
2u&\mbox{ if }u\geq \frac{1}{n^{2/3}}.
\end{array}\right.
\end{equation*} and
\begin{equation*}
f_n^{\p\p}(u)=\left\{\begin{array}{ll}
2&\mbox{ if }u\leq -\frac{1}{n^{2/3}},\\
2+\frac{2}{n^{1/4}}-36n^{13/12}u^2+90n^{29/12}u^4-56n^{15/4}u^6&\mbox{ if }-\frac{1}{n^{2/3}}\leq u\leq \frac{1}{n^{2/3}},\\
2&\mbox{ if }u\geq \frac{1}{n^{2/3}}.
\end{array}\right.
\end{equation*}
Hence $f_n\in C^2(\R)$ for $n\geq 1$. Note that there exists $n_0\in\mathbb{N}$ such that $f^{\p\p}_n(u)\geq 1$ for $n\geq n_0$. Suppose $\{a_n\},\{b_n\}$ are two sequences such that $0<a_{n+1}< b_{n+1}<a_n$. We consider a flux $A$ as follows
\begin{equation*}
A(x,u):=\left\{\begin{array}{rl}
f_n(u)&\mbox{ for }a_n\leq x\leq b_n\mbox{ for }n\geq n_0,\\
g(u)&\mbox{ otherwise.}
\end{array}\right.
\end{equation*}
We choose $a_n=\frac{1}{n^{2/3}}$ and $b_n=a_n+\frac{1}{n^{2n}}$. We approximate $A$ by $A^N$ as follows
\begin{equation*}
A^N(x,u):=\left\{\begin{array}{rl}
f_n(u)&\mbox{ for }a_n\leq x\leq b_n\mbox{ with }n_0\leq n\leq N,\\
g(u)&\mbox{ otherwise.}
\end{array}\right.
\end{equation*}
By the choice of $f_n(u)=g(u)$ for $\abs{u}\geq n^{-2/3}$. Consider the following data
\begin{equation}\label{def:counter-data}
u_0(x):=\left\{\begin{array}{rl}
0&\mbox{ for }x<0,\\
1&\mbox{ for }x>0.
\end{array}\right.
\end{equation}

\descitem{Step-2}{step-B}\textit{Analysis of characteristics:} In this step we track the paths travelled by the characteristics emanates from $x=0,t=0$ to get structural information about adapted entropy solution to \eqref{eq:discont} corresponding to flux $A^N$ and data $u_0$ as in \descref{step-A}{Step-1}.

Let $u^N$ be the adapted entropy solution to \eqref{eq:discont} with flux $A^N$ and data $u_0$. Note that initially rarefaction arises at $x=0$ and then it travels through interfaces and there is no interaction between waves. We observe that since $n_0\geq 4$, $f_n=g=u^2$ for $u=1$, hence no wave arises from interfaces at $t=0+$ for $x>0$. Let $L_N:[0,2]\rr\R$ be the path travelled by the characteristic arising from $x=0$ with speed $2w_N$ with $w_N:=(N+1)^{-2/3}$. Rest of this step is divided into two parts.

\begin{description}
	\descitem{Step-2a}{step-2a}\textit{Analyzing the curve $L_N$:} Note that $L_N$ is a piecewise linear curve originated from $x=0$ (see figure \ref{figure:example-2}). We get $L_N(0)=0$ and wish to estimate $L_N(1)$.
	 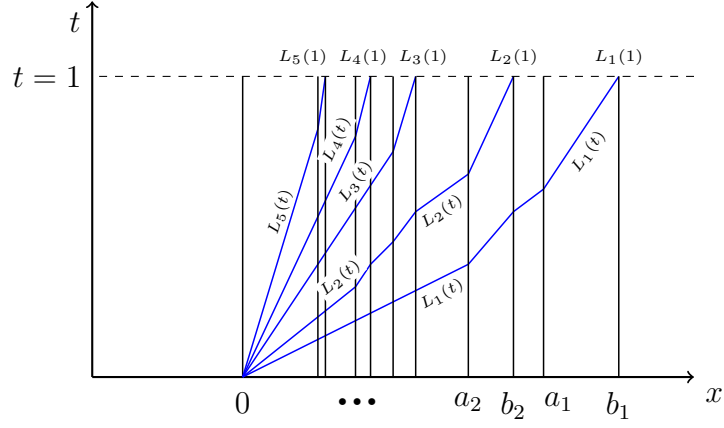
\begin{figure}
		\centering
		\begin{tikzpicture}

		\draw[thick,->] (-9 ,0) -- (-1,0) node[anchor=north west] {$x$};
		\draw[thick,->] (-9 ,0) -- (-9,5) node[anchor=north east] {$t$};
		
		\draw[line width=0.2mm, black] (-7 ,0) -- (-7,4) ;
		\draw[line width=0.2mm, blue] (-7 ,0) -- (-4,1.5)-- (-3.4,2.2)-- (-3,2.5) -- (-2,4) ;
		\draw[line width=0.2mm, blue] (-7 ,0) -- (-5.5,1.2) -- (-5.3,1.5) -- (-5,1.8) -- (-4.7,2.2) -- (-4,2.7) -- (-3.4,4) ;
		\draw[line width=0.2mm, blue] (-7 ,0) -- (-5,3) -- (-4.7,4) ;
		\draw[line width=0.2mm, blue] (-7 ,0) -- (-5.5,3.2) -- (-5.3,4) ;
		\draw[line width=0.2mm, blue] (-7 ,0) -- (-6,3.3) -- (-5.9,4) ;
		\draw[line width=0.2mm, black] (-3,0) -- (-3,4);
		\draw[line width=0.2mm, black] (-2,0) --(-2,4) ;
		\draw[line width=0.2mm, black] (-4 ,0) -- (-4,4);
		\draw[line width=0.2mm, black] (-3.4,0) -- (-3.4,4) ;
		\draw[line width=0.2mm, black] (-5 ,0) -- (-5,4) ;
		\draw[line width=0.2mm, black] (-4.7,0)--(-4.7,4);
		\draw[line width=0.2mm, black] (-5.5 ,0) -- (-5.5,4) ;
		\draw[line width=0.2mm, black] (-5.3,0)--(-5.3,4);
		\draw[line width=0.2mm, black] (-6 ,0) -- (-6,4) ;
		\draw[line width=0.2mm, black] (-5.9,0)--(-5.9,4);

		\draw[thick][] (-2,-.05) node[anchor=north] {$b_1$};
		\draw[thick][] (-2.8,-.05) node[anchor=north] {$a_{1}$};
		\draw[thick][] (-3.4,-.05) node[anchor=north] {$b_{2}$};
		\draw[thick][] (-4,-.05) node[anchor=north] {$a_{2}$};
		\draw[thick][] (-4,-.4) node[anchor=south west, transform canvas={scale=1.5}]{$\textbf{...} $};
		\draw[thick][] (-7,-.05) node[anchor=north] {$0$};
		
		\draw[dashed] (-1,4) -- (-9,4) node[anchor= east] {$t=1$};
	
		
%
		\draw[thick][] (-2,4) node[anchor=south] {\tiny $L_1(1)$};
		\draw[thick][] (-3.4,4) node[anchor=south] {\tiny $L_2(1)$};
		\draw[thick][] (-4.6,4) node[anchor=south] {\tiny $L_3(1)$};
		\draw[thick][] (-5.4,4) node[anchor=south] {\tiny $L_4(1)$};
		\draw[thick][] (-6.2,4) node[anchor=south] {\tiny $L_5(1)$};

		\draw (-2.4,3) node[rotate=55](N){\tiny$L_{1}(t)$};
		\draw (-4.35,1.1) node[rotate=30](N){\tiny$L_{1}(t)$};
		\draw (-4.35,2.2) node[rotate=35](N){\tiny$L_{2}(t)$};

					\draw (-5.7,1.32) node[fill=white, inner sep=1pt, rotate=40](N){\tiny$L_{2}(t)$};
			\draw (-5.75,3.16) node[fill=white, inner sep=1pt,rotate=67](N){\tiny$L_{4}(t)$};	
		\draw (-5.5,2.6) node[fill=white, inner sep=1pt,rotate=55](N){\tiny$L_{3}(t)$};	
			\draw (-6.5,2.2) node[fill=white, inner sep=1pt,rotate=73](N){\tiny$L_{5}(t)$};
		\end{tikzpicture}
		\caption{This figure shows the structure of the entropy solution for approximated flux $A^{N+5}$. Here the curves $L_k(t)$ represent the path traveled by characteristic arising from origin with speed $2(N+k)^{-2/3}$ for $k=1,2,3,4,5$. }\label{figure:example-2}
	\end{figure}

	 Let $v_{N,k}\in[0,\f)$ such that $g(w_N)=f_m(v_{N,k})$. Suppose $L_N(t)\in [a_k,b_k]$ for $t\in[t_k,t_k']$. Therefore, we have $f^{\p}_k(v_{N,k})(t_k'-t_k)=b_k-a_k$. Suppose $t^{\p\p}_k>t_k$ such that $b_k-a_k=(t^{\p\p}_k-t_k)g^{\p}(w_N)$. We obtain
\begin{equation*}
\abs{t^{\p\p}_k-t^{\p}_k}=(b_k-a_k)\frac{\abs{f^{\p}_k(v_{N,k})-g^{\p}(w_N)}}{f^{\p}_k(v_{N,k})g^{\p}(w_N)}.
\end{equation*}
Subsequently, we have
\begin{equation*}
g^{\p}(w_N)\abs{t^{\p\p}_k-t^{\p}_k}=(b_k-a_k)\frac{\abs{f^{\p}_k(v_{N,k})-g^{\p}(w_N)}}{f^{\p}_k(v_{N,k})}.
\end{equation*}
Let $L(\tilde{t}_N)=a_{N/2}$, then we have
\begin{equation*}
\abs{1-\tilde{t}_{N}}\leq \sum\limits_{k=\left[\frac{N}{2}\right]+1}^N(b_k-a_k)\frac{\abs{f^{\p}_k(v_{N,k})-g^{\p}(w_N)}}{f^{\p}_k(v_{N,k})g^{\p}(w_N)}.
\end{equation*}
Note that $\frac{1}{(N+1)^{2/3}}\geq \frac{1}{2(N/2)^{2/3}}$ for $N\geq n_1$ for some $n_1>1$. Subsequently, we have $\frac{k^{2/3}}{(N+1)^{2/3}}\geq \frac{1}{2}$. Now we observe that 
\begin{align*}
f_k\left(\frac{1}{2(N+1)^{2/3}}\right)&=\frac{1}{2(N+1)^{4/3}}\left(1+\frac{1}{k^{1/4}}\left(1-\frac{k^{4/3}}{4(N+1)^{4/3}}\right)^3\right)\\
&\leq g(w_N)\frac{1}{2^{4/3}}(1+\frac{1}{16k^{1/4}})\\
&\leq g(w_N)=f_k(v_{N,k}).
\end{align*}
Hence, we get $\frac{w_N}{2}\leq v_{N,k}$ for $N/2+1\leq k\leq N$. Since $(1-k^{4/3}u^2)^3\geq 0$ for $\abs{u}\leq k^{-2/3}$ we have $f_k(w_N)\geq f_k(v_{N,k})$, therefore $w_N\geq v_{N,k}$. 
This shows that 
\begin{equation*}
\abs{f^{\p}(v_{N,k})-g^{\p}(w_N)}\leq \abs{w_N-v_{N,k}}+\frac{\bar{C}}{k^{1/4}}v_{N,k}\leq {\tilde{C}}w_N.
\end{equation*}
Hence,
\begin{align}
\abs{1-\tilde{t}_N}\leq \sum\limits_{k=\frac{N}{2}+1}^{N}\frac{\tilde{C}}{k^{2k}w_N}=\tilde{C}(N+1)^{2/3}\sum\limits_{k=\frac{N}{2}+1}^{N}\frac{1}{k^{2k}}
&\leq \tilde{C}_1(N+1)^{2/3}\frac{2^N}{N^{2N+1}}\nonumber\\
&\leq \tilde{C}_2\frac{2^N}{N^{N+\frac{1}{3}}}.\label{estimate:time:t_N}
\end{align}
Therefore,
\begin{equation}\label{estimate:L}
\abs{L(1)-\frac{2}{(N+1)^{2/3}}}\leq \tilde{C}\abs{1-\tilde{t}_N}w_N\leq \tilde{C}_3\frac{2^N}{N^{N+1}}.
\end{equation}
This completes the \descref{step-2a}{Step-2a}.

\descitem{Step-2b}{step-2b} Let $u^m$ be the entropy solution to \eqref{eq:discont} with data $u_0$ as in \eqref{def:counter-data} and flux $A^m$. In this step we prove that adapted entropy solution $u^m$ agrees with $u^N$ in some subset of $\R\times[0,1]$ for sufficiently large $m$. 

 Due to the fact that $f_m=g$ for $u\geq (N+1)^{-2/3}$, then characteristic with speed $2(N+1)^{-2/3}$ travels through $L_N(1)$ for $m\geq N+1$ and we observe that $u^m(x,t)=u^N(x,t)$ for $(x,t)\in D_N$ where $D_N$ is defined as (see figure \ref{figure:example-2})
$$D_N:= (-\f,0]\times[0,1]\cup \{(x,t),x\geq L_N(t),t\in[0,1]\}.$$
\end{description}

\descitem{Step-3}{step-C}\textit{Calculating the variation $\abs{u^N(a_m+,1)-u^N(a_m+,1)}$:} In this step we wish to calculate $\abs{u^N(a_m+,1)-u^N(a_m+,1)}$ where $u^N$ is the adapted entropy solution to \eqref{eq:discont} corresponding to flux $A^N$ and data $u_0$. 

Let characteristic corresponding to $u=\frac{1}{2m^{2/3}}$ and originated from $x=0$ moves through path $Q(t)$. Note that $Q(t)$ is analogous to $L(t)$. By a similar argument as in \eqref{estimate:L} we obtain 
\begin{equation*}
\abs{Q(1)-a_m}\leq \frac{\tilde{C}_2}{m^{2m+1}}.
\end{equation*}
Suppose $Q(t)$ hits $x=b_{m-1}$ and $x=a_m$ at time $t=\tilde{t}_1$ and $t=\tilde{t}_2$ respectively. Suppose $u^N(a_m-)=\tilde{w}_N$, then we have,
\begin{equation*}
a_m-b_{m-1}=2w_N(\tilde{t}_2-\tilde{t}_1)\mbox{ and }a_m-b_{m-1}=2\tilde{w}_N(1-\tilde{t}_1).
\end{equation*}
Since $\tilde{t}_2\geq1$ we get $w_N\leq \tilde{w}_N$. Then we have
\begin{equation}\label{estimate:reciprocal-diff}
\abs{\frac{1}{w_N}-\frac{1}{\tilde{w}_N}}\leq \frac{\tilde{t}_2-1}{a_m-b_{m-1}}.
\end{equation}
Note that
\begin{equation*}
a_m-b_{m-1}=\frac{1}{2n^{2/3}}-\frac{1}{2(n+1)^{2/3}}-\frac{1}{(n+1)^{2n+2}}\geq \frac{1}{3(n+1)^{5/3}}-\frac{1}{(n+1)^{2n+2}}\geq \frac{1}{6(n+1)^{5/3}}.
\end{equation*}
By a similar argument as in \eqref{estimate:time:t_N} we have
\begin{equation*}
\abs{\tilde{t}_2-1}\leq \frac{B}{m^{2m+1}}
\end{equation*}
Hence, from \eqref{estimate:reciprocal-diff}, we obtain
\begin{equation*}
1-\frac{w_N}{\tilde{w}_N}\leq \frac{24B m^{5/3}}{m^{2m+1}}\frac{1}{2m^{2/3}}\leq \frac{12B}{m^{2m}}.
\end{equation*}
Then, we get
\begin{equation*}
1-\frac{12B}{m^{2m}}\leq\frac{w_m}{\tilde{w}_m}.
\end{equation*}

For sufficiently large $m$, that is there exists an $m_0>1$ such that $1-\frac{12B}{m^{2m}}\geq \frac{1}{2}$ for $m\geq m_0$. Therefore, $\tilde{w}_m\leq 2{w}_m$. Hence, we get
\begin{equation*}
\abs{w_m-\tilde{w}_m}\leq 2w_m^2 \frac{\tilde{t}_2-1}{a_m-b_{m-1}}\leq \frac{24B}{m^{2m+\frac{2}{3}}}
\end{equation*}
Suppose $f_m(\tilde{v}_m)=g(\tilde{w}_m)$. Note that there exists $m_1>1$ such that $\abs{\tilde{w}_m-w_m}\leq \frac{1}{6m^{2/3}}$. Then we note that $\frac{1}{3m^{2/3}}\leq \tilde{w}_m\leq \frac{1}{2m^{2/3}}$. Subsequently, we get
\begin{equation*}
f_m(\tilde{w}_m)=f_m\left(\frac{1}{2m^{2/3}}\right)=\frac{1}{4m^{4/3}}\left(1+\frac{1}{8m^{1/4}}\right)\geq \frac{1}{4m^{4/3}}=g(\tilde{w}_m)=f_m(\tilde{v}_m).
\end{equation*}
Thus we get $\tilde{w}_m\geq \tilde{v}_m$. Note that
\begin{equation*}
f_m\left(\frac{1}{3n^{2/3}}\right)=\frac{1}{9m^{4/3}}\left(1+\frac{4}{9m^{1/4}}\right)\leq \frac{2}{9m^{4/3}}\leq g(\tilde{w}_m)=f_m(\tilde{v}_m).
\end{equation*}
Hence $1/3\leq \tilde{v}_mm^{2/3}\leq 1/2$. Therefore,
\begin{equation*}
\tilde{w}_m^2=\frac{1}{4m^{4/3}}=f_m(\tilde{v}_m)=v_m^2\left(1+\frac{(1-m^{2/3}v_m)^3}{m^{1/4}}\right)\leq v_m^2\left(1+\frac{8}{27 m^{1/4}}\right).
\end{equation*}
Hence
\begin{equation*}
\abs{\tilde{w}_m-\tilde{v}_m}=\tilde{w}_m-\tilde{v}_m\geq \tilde{w}_m\left(\frac{\sqrt{1+\frac{8}{27m^{1/4}}}-1}{\sqrt{1+\frac{8}{27m^{1/4}}}}\right)\geq \frac{1}{27m^{1/4}}\left(\frac{1}{2m^{2/3}}-\frac{B}{m^{2m+\frac{2}{3}}}\right).
\end{equation*}
Therefore, we obtain for $N\geq 2m+1$,
\begin{equation}\label{lower-estimate-uN}
\abs{u^N(a_m+,1)-u^N(a_m-,1)}\geq \frac{1}{27m^{1/4}}\left(\frac{1}{2m^{2/3}}-\frac{B}{m^{2m+\frac{2}{3}}}\right).
\end{equation}
\descitem{Step-4}{step-D}\textit{Convergence of $u^m$ as $m\rr\f$:} In this step we want to pass to the limit for $\{u^m\}$ sequence as $m\rr\f$. Then we show that limiting function is an adapted entropy solution to \eqref{eq:discont} for initial data $u_0$ defined as in \eqref{def:counter-data}.

 Note that $\norm{u^N-u^m}_{L^1(\R\times[0,1])}\leq \tilde{B}N^{-2/3}$ for all $m\geq N$. Therefore, $\{u^m\}_{m\geq1}$ is a Cauchy sequence in $L^1(\R\times[0,1])$. Hence, there exists a $u\in L^1(\R\times[0,1])$ such that $u^m\rr u$ in $L^1(\R\times[0.1])$. We also note that $A^N(x,\cdot)=A(x,\cdot)$ for $x\in\R\setminus[0,b_N]$. This implies $k^\pm_{\al,N}(x)=k^\pm_\al(x)$ for $x\in \R\setminus[0,b_N]$ and we get $k_{\al,N}^\pm\rr0$ as $N\rr\f$. Since $u^N$ is adapted entropy solution, $u^N\rr u$ in $L^1(\R\times[0,T])$, $k^\pm_{\al,N}\rr k^\pm_{\al}, A^N(x,u)\rr A(x,u)$ for a.e. $x\in\R$, we get $u$ is adapted entropy solution to \eqref{eq:discont} for data $u_0$ as in \eqref{def:counter-data} with flux $A(x,u)$.
 \descitem{Step-5}{step-E}\textit{BV blow up:} Let $u$ be the adapted entropy solution obtained in \descref{step-D}{Step-4}. By using \descref{step-2b}{Step-2b}, we note that $u(x,t)=u^N(x,t)$ for $(x,t)\in D^N$. From \eqref{lower-estimate-uN} we get
\begin{equation*}
\abs{u(a_m+,1)-u(a_m-,1)}\geq \frac{1}{27m^{1/4}}\left(\frac{1}{2m^{2/3}}-\frac{B}{m^{2m+\frac{2}{3}}}\right).
\end{equation*}
Hence we have $TV(u(\cdot,1))=\f$.
\end{description}

\begin{remark}
Let $u_0$ be defined as in \eqref{def:counter-data}. Due to the finite speed of propagation we can consider a data $\tilde{u}_0=u_0\chi_{[-M,M]}$ for some large $M>0$ and solution still gives a TV blow up at time $t=1$.
\end{remark}

	\noindent\textbf{Acknowledgement.} The first and third author would like to thank Department of Atomic Energy, Government of India, under project no. 12-R\&D-TFR-5.01-0520 for support. The first author acknowledges the Inspire faculty-research grant DST/INSPIRE/04/2016/000237.

\section{References}

\end{document}